\newtheorem{theorem}{Theorem}[section]
\newtheorem{lemma}[theorem]{Lemma}
\newtheorem{corollary}[theorem]{Corollary}
\newtheorem{proposition}[theorem]{Proposition}
\theoremstyle{definition}
\newtheorem{definition}[theorem]{Definition}
\newtheorem{example}[theorem]{Example}
\newtheorem{remark}[theorem]{Remark}
\newtheorem{construction}[theorem]{Construction}
\newtheorem{claim}[theorem]{Claim}
\newtheorem{procedure}[theorem]{Procedure}
\newtheorem{notation}[theorem]{Notation}
\newtheorem{thmy}{Theorem}
\DeclareRobustCommand{\SkipTocEntry}[5]{}
\newcommand{\eps}{\varepsilon}
\newcommand{\abs}[1]{\left\vert #1 \right\vert}
\newcommand{\norm}[1]{\left\Vert #1 \right\Vert}
\newcommand{\R}{\mathbb{R}}
\newcommand{\C}{\mathbb{C}}
\newcommand{\Z}{\mathbb{Z}}
\newcommand{\D}{\mathbb{D}}
\renewcommand{\P}{\mathbb{P}}
\newcommand{\del}{\partial}
\newcommand{\pdev}[2][\vphantom{a}]{\frac{\partial #1}{\partial #2}}
\newcommand{\dd}{\mathrm{d}}
\newcommand{\DD}{\mathrm{D}}
\title{Weinstein Handlebodies for the Painlevé Betti Spaces}
\author{Joël D. Beimler and William E. Olsen}
\date{\today}
\begin{document}
\begin{abstract}
We prove that the generic fibre of the Betti moduli space associated to any of the ten Painlevé equations coincides with the result of attaching Weinstein handles along the Stokes Legendrian, and provide Weinstein handlebody diagrams for each of them in the process. We moreover extend the procedure for obtaining presentations in Gompf normal form of Weinstein manifolds obtained by attaching handles to Legendrian lifts of cooriented curves in closed surfaces to surfaces with nonempty boundary.
\end{abstract}
\maketitle

\begin{figure}[htbp]
	\centering
	\def\svgwidth{\textwidth}
	\import{inkscape_images}{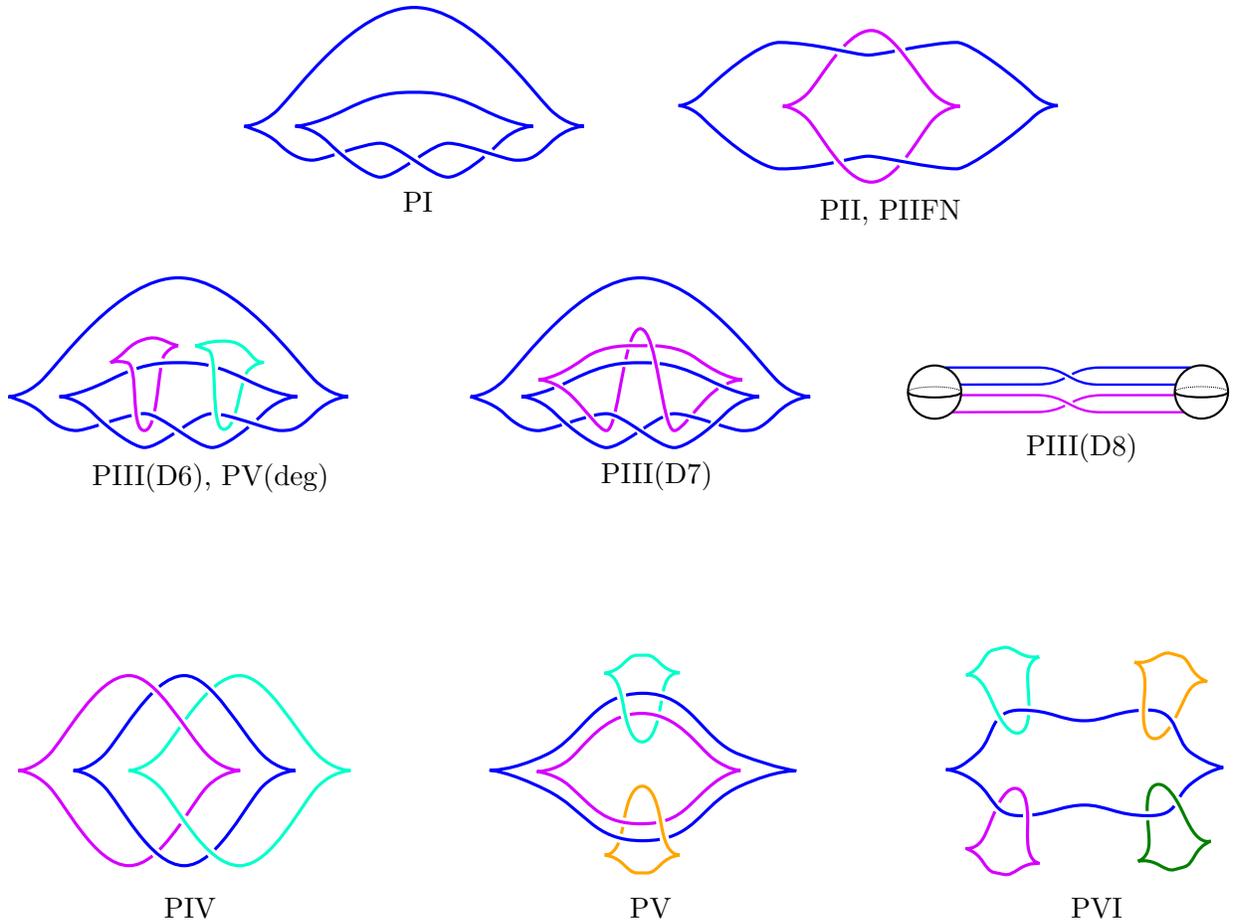}
	\caption{Legendrian attaching links for the Painlevé moduli spaces.}
	\label{fig:main_results}
\end{figure}

\newpage

\tableofcontents
\newpage

\section{Introduction}
\label{sec:introduction}
Moduli spaces of local systems on Riemann surfaces carry a variety of rich geometric structures which have been studied from many different points of view. Here, we study certain moduli spaces of rank-two local systems on a Riemann surface $\Sigma$, additionally equipped with Stokes data. Such data encodes their monodromy around a set of marked points $\mathbf{p} \subset \Sigma$, together with data compatible with a type of ``irregular singularity'' at each puncture. 

Specifically, we study moduli spaces associated to the Painlevé equations. There are ten of these equations, and they are customarily indexed by the set
\[
    \mathfrak{P}:= \{ I, \, II, \, II(FN), \, III(D6), \, III(D7), \, III(D8), \, IV, \, V, \, V(deg), \, VI \}.
\]   

Associated to each Painlevé equation is Okamoto's space of initial conditions \cite{okamoto_feuilletages}, which is a moduli space of flat connections on $\P^1$ with singularities of fixed (possibly) irregular type at a set of marked points $\mathbf{p}$, up to gauge equivalence. The moduli spaces we study were introduced to parametrize the data needed to recover Okamoto's spaces under the irregular Riemann-Hilbert correspondence. In this context, van der Put and Saito explicitly describe in \cite{saito_vput} these moduli spaces as families of affine cubic surfaces, depending on some number of parameters. For a generic fixed choice of parameters, one obtains a smooth affine variety of complex dimension $2$.

One can alternatively encode the topological data associated by the Riemann-Hilbert correspondence to an irregular type by a Legendrian link $\Lambda \subset T^\infty \Sigma$ whose front projection to $\Sigma$ encircles each puncture, called the \textbf{Stokes Legendrian}. Serving as our definition for the rest of this paper, the moduli space of monodromies and Stokes data is in this formulation expressed as $\mathbf{M}_{\Lambda}(\Sigma, \mathbf{p})$, the moduli space of sheaves on $\Sigma$ whose microsupport lies in the Stokes Legendrian $\Lambda$, and whose stalks have rank $1$ at $\Lambda$ and vanish at $\mathbf{p}$. See \cite[Section 3.3]{shende2019cluster}, and \cite[Appendix B]{su_betti_dual_boundary} for a careful account.

There is a natural map $\mathbf{M}_\Lambda(\Sigma, \mathbf{p}) \to Loc(\Lambda)$, the microlocal monodromy; denote by $M_\Lambda(\Sigma, \mathbf{p})$ the fibre over a regular value in its image. For the Painlevé spaces, its image can be identified with the parameter spaces of the families exhibited in \cite{saito_vput}. We write $\mathbf{M}_X := \mathbf{M}_{\Lambda_X}(\P^1, \mathbf{p}_X)$ for $X \in \mathfrak{P}$, where the pairs $(\Lambda_X, \mathbf{p}_X)$ are given in \zcref{fig:stokes_legendrians}. Write $M_X$ for a generic fibre of the monodromy, which corresponds to fixing generic parameters picking out a smooth affine cubic surface from the family $\mathbf{M}_X$.

\begin{figure}
	\centering
	 \def\svgwidth{\textwidth}
    \import{inkscape_images}{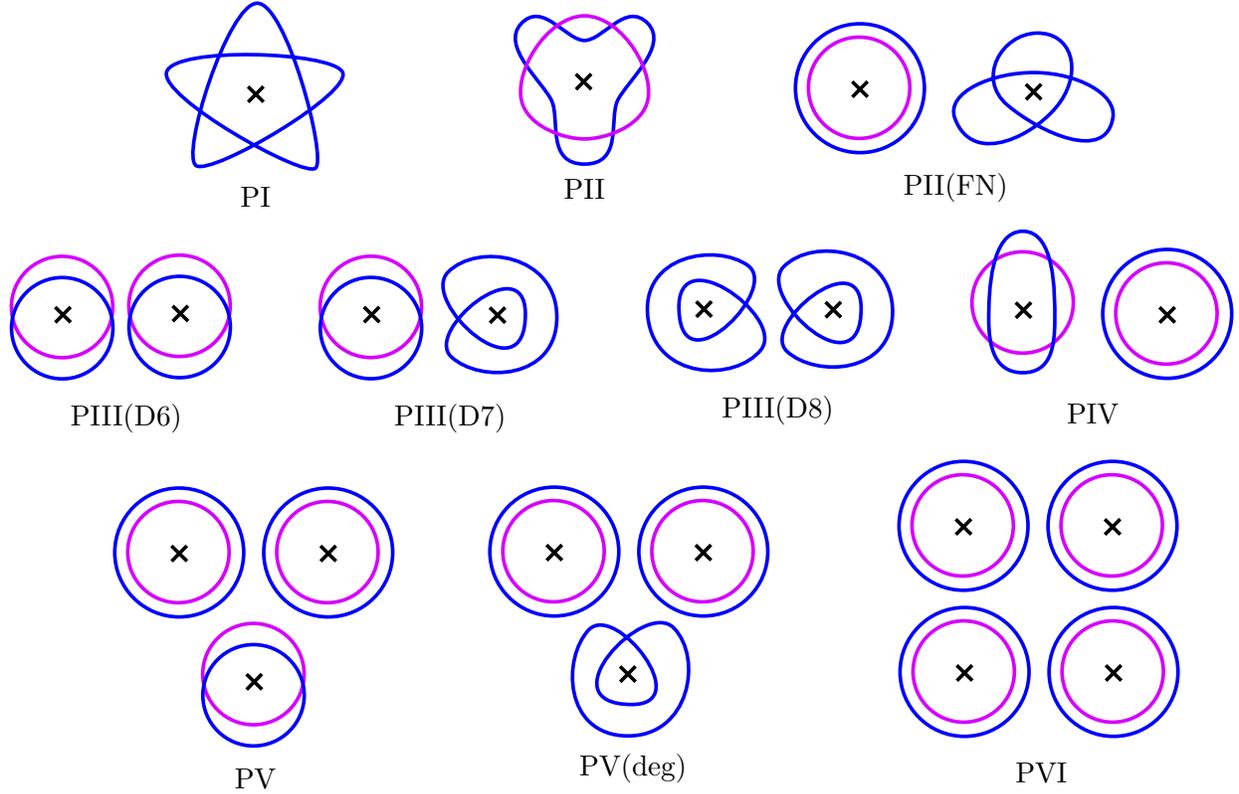}
	\caption{Stokes data associated to the Painlevé spaces. Each diagram should be interpreted as lying on $\P^1$, and the coorientation is taken to point away from the puncture.}
	\label{fig:stokes_legendrians}
\end{figure}

Recall that any smooth complex affine variety is an open exact symplectic manifold which can be endowed with a symplectic primitive whose dual vector field (the \textbf{Liouville vector field}) is gradientlike for some proper Morse function, in other words, a \textbf{Weinstein manifold} (see e.g. \cite{ce_stein_weinstein}). In this setup, such a taming function induces a handlebody calculus adapted to the exact symplectic setting \cite{weinstein}, which governs the symplectic topology of the Weinstein manifold. A notable phenomenon is that the handles comprising a Weinstein manifold of dimension $2n$ can have index at most $n$. Connected Weinstein $4$-manifolds can thus be described by the combinatorics of Legendrian links (attaching spheres of $2$-handles) and isotropic $0$-spheres (attaching spheres of the $1$-handles) in the boundary of a small sublevel set of the Morse function.

The next most abundant class of examples of Weinstein manifolds after smooth complex affine varieties are cotangent bundles, and one can obtain even more examples by the following construction. One can describe the boundary of a sublevel set by the unit cotangent bundle $T^\infty Q = \{(q,p) \in T^*Q \mid \norm{p} = 1  \}$ for any choice of metric, and in the four-dimensional case (that is, $Q = \Sigma$ is a surface), one can lift an immersed curve $\gamma \subset \Sigma$, together with a transverse vector field $n \in \Gamma(\gamma^*(T\Sigma/T \gamma))$ (that is, a coorientation of $\gamma$), to a Legendrian 
\[
\Lambda_\gamma = \{ (q,p) \in T^\infty \Sigma \mid q \in \mathrm{im}(\gamma), \, p(T \gamma) = 0, \, p(n) > 0 \}.
\] 

One obtains a new Weinstein manifold by attaching handles along $\Lambda_\gamma$. We will be interested in performing this operation for the surfaces $\P^1 - \mathbf{p}_X$ and the Stokes Legendrians $\Lambda_X$; The reader familiar with Weinstein manifolds will note that one has to slightly modify $T^*\Sigma$ in order to make it into an honest Weinstein manifold, but the modifications are minor and we defer them to \zcref{sec:stokes_lifts}.

We can now state our main theorem.

\begin{theorem}\label{thm:stokes_is_generic_monodromy}
    For any Painlevé type $X \in \mathfrak{P}$, the space obtained by attaching handles along the Stokes Legendrian $\Lambda_X \subset T^\infty(\P^1 - \mathbf{p}_X)$ coincides with $M_X$ up to Weinstein deformation equivalence.
\end{theorem}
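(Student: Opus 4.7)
The plan is to reduce \zcref{thm:stokes_is_generic_monodromy} to a case-by-case identification of Weinstein handlebody diagrams, using the fact that two Weinstein $4$-manifolds are Weinstein deformation equivalent if and only if their presentations in Gompf normal form are related by a finite sequence of Kirby-Gompf moves (handle slides, stabilisations, cancellations, and Legendrian isotopies). It therefore suffices to produce, for each $X \in \mathfrak{P}$, matching Gompf normal form presentations for the handle-attachment space and for the generic monodromy fibre $M_X$.

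For the handle-attachment side, I would apply the extension of Gompf normal form developed earlier in this paper to the data $(\P^1 - \mathbf{p}_X, \Lambda_X)$ displayed in \zcref{fig:stokes_legendrians}: choose a handle decomposition of $\P^1 - \mathbf{p}_X$ compatible with the Stokes Legendrian, record the $1$-handles needed to encode the punctures (after the modifications of $T^*\Sigma$ described in \zcref{sec:stokes_lifts}), and read off the Legendrian attaching link in front projection. This yields, essentially by construction, a Gompf normal form diagram for the space produced by the Weinstein handle attachment. For the algebraic side, I would start from the explicit affine cubic surface description of $M_X$ from \cite{saito_vput}: choosing a suitable pencil on each cubic yields a Lefschetz fibration on $M_X$ whose vanishing cycles can be computed in closed form from the defining equation, producing a Weinstein handlebody presentation that can then also be put in Gompf normal form.

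The concluding step is the explicit identification of the two diagrams for each of the ten values of $X$, via Kirby-Gompf moves. I expect this to be the main obstacle: even with both diagrams in Gompf normal form, the handle slides and Legendrian isotopies needed to bring them into literal coincidence are not guided by any universal algorithm and must be found by hand. The simpler types ($I$, $II$, and the $III$ family) should reduce quickly to standard models in the literature, while the richer types ($V$, $V(deg)$, and particularly $VI$, which has the most punctures and the most elaborate Stokes data) will require substantial manipulations. A secondary technical point that requires care is the identification of the parameter spaces on the two sides—on the sheaf-theoretic side the generic fibre is taken with respect to the microlocal monodromy map $\mathbf{M}_\Lambda \to \mathrm{Loc}(\Lambda)$, whereas on the algebraic side it refers to the van der Put–Saito family parameters—so one must verify that the two notions of genericity correspond under the Riemann-Hilbert correspondence, ensuring that both constructions pick out the same specific smooth affine cubic surface.
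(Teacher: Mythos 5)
Your proposal matches the paper's strategy closely: extend the Gompf-normal-form lifting procedure to cotangent bundles of surfaces with boundary (the paper's \zcref{prop:lifting_with_boundary}), apply it to each pair $(\P^1 - \mathbf{p}_X, \Lambda_X)$, separately compute a Weinstein handlebody for each affine cubic $M_X$ via Lefschetz (bi)fibrations in the spirit of \zcref{recipe_3_3}, and match the two diagrams case by case with handlebody-calculus moves. This is exactly what the paper does in \zcref{sec:handle-bodies-Painleve} and \zcref{sec:stokes_lifts}.

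One remark on your secondary technical point: you worry about whether the two notions of genericity (the microlocal monodromy fibre versus the van der Put--Saito family parameter) select the same cubic under the Riemann--Hilbert correspondence. In the paper's argument this issue does not arise, and you should not need it either. The handle-attachment side of the theorem is computed directly as a Weinstein handlebody from the front of the lifted Stokes Legendrian, without ever passing through an affine cubic. Meanwhile, on the algebraic side, \zcref{painleve_independence} shows that \emph{any} parameter value making the cubic smooth gives a Weinstein deformation equivalent space, so the specific cubic picked out is irrelevant. The proof is then a purely diagrammatic comparison of two handlebody presentations, and the Riemann--Hilbert matching of parameter spaces never enters.
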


\begin{remark}
    Recall from \cite{gps3} that the wrapped Fukaya category of $M_X$ is equivalent to $\mu \mathrm{sh}(\mathrm{Skel}(M_X))$, the category of microsheaves on $\mathrm{Skel}(M_X)$. \zcref{thm:stokes_is_generic_monodromy} provides such skeleta: they are given by $\P^1 - \mathbf{p}_X$, and disks attached along the projections of the Stokes Legendrians from \zcref{fig:stokes_legendrians} (the disks are the cores of the $2$-handles). 

    There is a map 
    \[
    \mu \mathrm{sh}(\mathrm{Skel}(M_X)) \to \mathrm{sh}_{\Lambda_X}(\P^1 - \mathbf{p}_X),
    \]
    induced by restriction; the image of this map consists of sheaves with unipotent monodromy. We expect this fact to have an interpretation in the context of the Betti geometric Langlands conjecture \cite{betti_geom_langlands}.
\end{remark}

We prove \zcref{thm:stokes_is_generic_monodromy} by applying the techniques from \cite{cm_lefs} to the affine cubic surfaces exhibited in \cite{saito_vput} to obtain a handlebody diagram of each of the spaces $M_X$ in Gompf normal form: the outcome is shown in \zcref{fig:main_results}. We then establish in \zcref{prop:lifting_with_boundary} a mild generalization of \cite[Theorem 8.1]{acu_complements}, which may be of independent interest: a procedure to obtain a Weinstein handlebody diagram in Gompf normal form of the Weinstein manifold obtained by attaching handles along the lifts of cooriented curves in a surface with possibly nonempty boundary to its unit cotangent bundle. Applying \zcref{prop:lifting_with_boundary} to the pairs $(\P^1-\mathbf{p}_X, \Lambda_X)$ from \zcref{fig:stokes_legendrians}, we recover the diagrams for the $M_X$ after some simplifications via Legendrian Reidemeister moves.

% This paper is structured as follows. The next three sections essentially review and explain how we apply \cite[Recipe 3.3]{cm_lefs}: We start by recalling in \zcref{sec:Weinstein-manifolds} the fundamentals of Weinstein manifolds and Legendrian Kirby calculus. In \zcref{sec:symplectic-fibrations}, we give an overview of Lefschetz fibrations, how to use Lefschetz bifibrations to determine their vanishing cycles, and how to exhibit Lefschetz fibrations on affine varieties. \zcref{sec:fronts} explains how to produce front projections of the lifts of the vanishing cycles, yielding the final handlebody diagram. We also review the steps of the algorithm in \zcref{recipe_3_3}. What is left is to employ this machinery in \zcref{sec:handle-bodies-Painleve} to produce the handlebodies for the Painlevé spaces. 

% Finally, in \zcref{sec:stokes_lifts}, we prove \zcref{thm:stokes_is_generic_monodromy} by computing the Stokes Legendrians for each Painlevé using the local exponent data obtained in \cite{saito_vput}, establishing \zcref{thm:lift_algo}, and applying it to each of the Stokes Legendrians.

\addtocontents{toc}{\SkipTocEntry}
\subsection*{Acknowledgements}
We wish to thank Vivek Shende for proposing this project to us; Roger Casals for his interest and helpful discussions; and Emmy Murphy for explaining her result on the Weinstein handlebody of the cotangent bundle of a closed surface. Both authors are supported by Villum Fonden Villum Investigator grant 37814.

\section{Weinstein Manifolds}
\label{sec:Weinstein-manifolds}

\subsection{Generalities}
Weinstein manifolds are a class of exact symplectic manifolds which admit a combinatorial description in terms of handle attachments \cite{weinstein}. Smooth complex affine varieties are naturally Weinstein, and thus in light of \cite{saito_vput}, so are the spaces $M_X$ for $X \in \mathfrak{P}$.

Recall first that any exact symplectic manifold $(X, \omega=\dd\lambda)$ admits a \textbf{Liouville vector field} $Z$, defined by
\[
    \imath_Z \omega = \lambda.
\]

The primitive and the Liouville vector field mutually determine each other.

\begin{definition}
    A \textbf{Weinstein manifold} $(X, \omega, Z)$ is a noncompact exact symplectic manifold $(X, \omega)$, with a choice of primitive whose Liouville vector field $Z$ admitting a proper and bounded-from-below Morse-Bott function $\phi: X \to \R$ which is \textbf{Lyapunov} for $Z$, that is,
    \[
        \dd \phi(Z)\geq \delta(\norm{Z}^2 + \norm{\dd \phi}^2),
    \]
    for some positive $\delta$ and any choice of metric. 
\end{definition}

\begin{remark}
    In light of $Z$ and $\lambda$ determining each other, we will sometimes denote Weinstein manifolds by $(X, \lambda)$ or $(X, \lambda)$. The function $\phi$ is not part of the data specifying a Weinstein manifold, only its existence is required. We may still occasionally refer to a Weinstein manifold as $(X, \phi)$ if the Liouville structure is understood.
\end{remark}

\begin{example}\label{ex:weinstein_on_variety}
    Let $X \subset \C^n$ be a smooth affine variety. Consider the function $\phi_0$ given by
    \[
        \C^n \to \C, \quad \mathbf{z} \mapsto \norm{\mathbf{z}}^2.
    \]
    $\phi_0$ is \textbf{plurisubharmonic}, that is, $\dd \dd^\C \phi_0$ is a symplectic form on $\C^n$. In fact, $\omega_{\phi_0} = -\dd \dd^\C\phi_0$ is the standard symplectic structure on $\C^n$, and $\omega_{\phi_0}(\cdot, J \cdot)$ is the standard metric. Restricting these structures to $X$ yields an exact symplectic form and a compatible metric $g_{\phi_0}$, and it turns out that the gradient vector field $\nabla_{g_{\phi_0}} \phi_0$ is Liouville, making $(X, \omega_{\phi_0}, \nabla_{g_{\phi_0}} \phi_0, \phi_0)$ into a Weinstein manifold \cite{ce_stein_weinstein}.
\end{example}

\begin{example}\label{ex:ample_weinstein}
    Let $\mathcal{L} \to X$ be an ample line bundle on a smooth projective variety $\overline{X}$. Choose a section $s \in H^0(\overline{X}, \mathcal{L})$ and set $D := s^{-1}(0)$. Now choose a metric $\norm{\cdot}$ on $\mathcal{L}$ whose curvature is a Kähler form on $\overline{X}$ (which exists by ampleness of $\mathcal{L}$); the section $s$ gives rise to a plurisubharmonic function on $X := \overline{X}\setminus D$, namely
    \[
        \phi_\mathcal{L}(x) = -\log(\norm{s(x)}^2) : X \to \R.
    \]
    One can show that $\phi_\mathcal{L}$ has finitely many critical points \cite[Lemma 8]{ramanujan}, and that $\nabla_{\phi_{\mathcal{L}}}$ is a Liouville vector field, so that $(X, \omega_{\phi_\mathcal{L}}, \nabla_{g_{\phi_{\mathcal{L}}}}, \phi_{\mathcal{L}})$ is a Weinstein manifold.

    Note that ampleness of $\mathcal{L}$ implies that $X$ is affine; One can show that the two Weinstein structures on $X$ (i.e., one from this construction, and the other from embedding $X \hookrightarrow \C^n$ and restricting the standard structure) are Weinstein homotopic.

    % Set $w:= p \vert_X$. It follows from \cite[Lemma 8.6]{mclean_symp_homology} that there exists a Liouville form $\theta_1$ on $X$ such that $(X, \dd \theta_1, w)$ is a Liouville Lefschetz fibration without boundary such that $(X, \dd \theta_1)$ is deformation equivalent to $(X, \dd^\C \phi_\infty)$; The primary property that has to be verified is the existence of parallel transport vector fields at infinity.
\end{example}

\begin{definition} The Lyapunov condition implies that $Z$ is outward-pointing on the sublevel sets $\{\phi \leq c\}$; these sublevel sets are referred to as \textbf{Weinstein domains}.
\end{definition}

In the absence of a Lyapunov function, we call $(X, \omega, Z)$ a \textbf{Liouville manifold} if there exists an exhausting collection of \textbf{Liouville domains}, i.e., compact subdomains with boundary along which $Z$ points outward.

\begin{definition}
    For any Liouville manifold $(X, \dd \lambda, Z)$, the restriction of $\lambda$ to any hypersurface which is transverse to $Z$ is a contact form. In particular, this holds true for the boundary of any Liouville subdomain of $X$. 

    In the Weinstein case, fix $\phi$ to be a Morse function (this can always be arranged) and consider level sets $Y_i = \phi^{-1}(c_i)$ for regular values $c_0, c_1$ such that $[c_0, c_1]$ contains no critical value. The flow of the Liouville vector field then induces a contactomorphism between $Y_0$ and $Y_1$. If $\phi$ has finitely many critical values, there is a well-defined contact manifold $\del_\infty X$, contactomorphic to a large level set of a Lyapunov Morse function. We call $\del_\infty X$ its \textbf{ideal boundary}. Note that there is no canonical contact form on $\del_\infty X$.
\end{definition}

The notion of isomorphism in the category of Weinstein manifolds is that of an exact symplectomorphism, that is, a diffeomorphism $\Phi: (X_0, \dd \lambda_0) \to (X_1, \dd \lambda_1)$ such that $\Phi^*\lambda_1 = \lambda_0 + \dd K$, for some compactly supported function $K$. A family of Weinstein structures $(\omega_t, Z_t, \phi_t)$ on $X$ induces an exact symplectomorphism $X \to X$ (\cite[Proposition 11.8]{ce_stein_weinstein}), and we call such a family a \textbf{Weinstein homotopy}. Note that $\phi_t$ is allowed to exhibit birth-death type critical points. Two Weinstein manifolds which are related by a diffeomorphism $X_0 \to X_1$ so that the pullback structure is Weinstein homotopic to the one on $X_0$ are called \textbf{Weinstein deformation equivalent}.

We will not distinguish between Weinstein domains and manifolds; taking sublevel sets of $\phi$ yields a Weinstein domain, and attaching to a Weinstein domain $(X, \omega = \dd \lambda, Z)$ a piece of the symplectization $(\del X \times (-\eps, \infty], \dd(e^{t}\lambda\vert_{\del X}))$ via the flow of $Z$ yields a Weinstein manifold, denoted $\hat{X}$ and called the \textbf{completion} of $X$. For a Weinstein manifold $X$ such that $\phi$ has finitely many critical values, completing a domain $\{\phi \leq c\}$ for $c$ larger than the largest critical value produces a Weinstein manifold which is deformation equivalent to $X$.

\subsection{Weinstein handlebodies}

If $(X, Z, \phi)$ is such that $\phi$ is Morse (which can always be arranged), then $\phi$ induces a symplectic handle decomposition \cite{weinstein} of $X$; different choices of $\phi$ correspond to different handle decompositions of the same manifold.

A remarkable fact about the handlebodies of Weinstein manifolds $X^{2n}$ is that no handles of index greater than $n$ occur, and these \textbf{critical handles} are the only data that affect the symplectic topology of $X$ in a meaningful way; this is reflected by the fact that Weinstein manifolds without critical handles (called \textbf{subcritical}) are deformation equivalent to a product $X^{2n} \cong F^{2n-2} \times \C$, where $F$ is another (possibly critical) Weinstein manifold \cite{split}.

When giving a handlebody presentation of a Weinstein manifold $X^{2n}$, it is efficient to describe the attaching data to the \textbf{subcritical boundary} of $X_0$, which is the boundary of the union of all subcritical handles. Equivalently, it is the sublevel set $\{\phi \leq c\}$ for some regular value $c$ of a Lyapunov function $\phi$ which is self-indexing, i.e., if $p \in \mathrm{Crit}(\phi)$ is a critical point of index less than $n$, then $\phi(p) < c$, and if the index equals $n$, then $\phi(p) > c$. Given any Lyapunov Morse function $\phi$, \cite[Proposition 12.20]{ce_stein_weinstein} allows one to permute the critical values, so this property can always be satisfied.

We choose to interpret the subcritical boundary as the contact open book $\mathrm{ob}(F_0, \lambda; \mathrm{id})$ whose page is a Weinstein domain $F_0$, the completion of which is such that $\hat{X}_0 \cong \hat{F}_0 \times \C$.

\begin{definition}
    Let $(\Sigma, \lambda)$ be a Liouville domain, and $\phi: \Sigma \to \Sigma$ an exact symplectomorphism preserving the boundary (that is, $\phi^*(\lambda) - \lambda = \dd K$ for some function $K$ vanishing near $\del \Sigma$). The \textbf{abstract open book} $\mathrm{ob}(\Sigma, \lambda; \phi)$ is a closed contact manifold, topologically given by
    \[
        \Sigma \times [0,2\pi]/((x,2\pi) \sim (\phi(x), 0)) \cup_{\del \Sigma \times \del \D^2} \left( \del \Sigma \times \D^2\right).
    \]

    The submanifold $\del \Sigma \times \{0\}$ in the right hand summand of the union is called the \textbf{binding}, and the submanifolds $\Sigma \times \{\eps\}$ are the \textbf{$\eps$-pages}, denoted $\Sigma_\eps$.

    The contact structure is defined by a $1$-form $\alpha$ which is such that the restrictions $(\Sigma_\eps,\alpha\vert_{\Sigma_\eps})$ are Liouville domains which are exact symplectomorphic to $(\Sigma, \lambda)$. Moreover, the Reeb vector field $R_\alpha$ is transverse to the pages, and such that its time-$2\pi$ flow is isotopic to $\phi$ (see, e.g., \cite{geiges_contact_intro}).
\end{definition}

\begin{remark}
    Given an abstract Weinstein Lefschetz fibration  $\mathrm{Lf}(\Sigma, \theta; C_1, \ldots, C_k)$ over $\D$, the boundary of its total space (with corners smoothed) is contactomorphic to the abstract open book $\mathrm{ob}(\Sigma, \theta; \phi)$, where $\phi = \tau_{C_1} \circ \ldots \circ \tau_{C_k}$ is the composition of right-handed Dehn twists along the vanishing cycles.
\end{remark}

    From this description and the fact that subcritical Weinstein manifolds are split, the trivial Lefschetz fibration $\Sigma \times \D^2 \to \D^2$ corresponding to the abstract Weinstein Lefschetz fibration $\mathrm{Lf}(\Sigma, \lambda; \emptyset)$ establishes $\mathrm{ob}(\Sigma, \theta; \mathrm{id})$ as the ideal boundary of the subcritical Weinstein manifold $X \cong \Sigma \times \C$.

% The fact that we can always choose such an ordering of the critical values of $\phi$ follows from \cite[Lemma 12.20]{ce_stein_weinstein}:

% \begin{lemma}
%     Let $(X, \omega, X, \phi)$ be an elementary Weinstein cobordism. Then there is a homotopy $(X, \omega, Z, \phi_t)$ of elementary Weinstein cobordisms such that $\phi_t = \phi$ near $\del X$ rearranging the critical values of $\phi$ arbitrarily.
% \end{lemma}

% For sufficiently generic $Z, \phi$ (i.e., Morse-Smale), the stable manifolds... \red{Point of this paragraph would be to explain why we can arrange critical values to be contained in an elementary cobordism and then rearrange. Maybe not so important.}

% \begin{itemize}
%     \item For a Weinstein domain of dimension $2n$, the maximal index of a handle is $n$ (called the critical index);
%     \item When attaching a $k$-handle to a Weinstein domain, the attaching sphere is an isotropic $S^{n-k-1}$ in the contact boundary;
%     \item \textbf{$k$-Subcritical Weinstein manifolds}, i.e., Weinstein manifolds admitting a Weinstein handlebody without handles of index $n-k$ or higher, are deformation equivalent to $F \times \C^{k}$, where $F$ is a (critical) Weinstein manifold of dimension $2n-2k$;
%     \item The framing of a critical handle is unique.
% \end{itemize}

Such a presentation of a Weinstein manifold becomes especially effective in dimension four, where it is enough to specify a $2$-dimensional Weinstein domain $(F, \lambda)$ (i.e., $F$ is an orientable surface with boundary), and the attaching sphere, in this setting a Legendrian link in the contact boundary $\mathrm{ob}(F, \lambda; \mathrm{id})$.

This open book is contactomorphic to $\#^k(S^1 \times S^2)$ with its standard contact structure, where $k = \mathrm{rk}(H_1(F))$. The framing for critical Weinstein handle attachment is unique; For each link component $\Lambda$, it is given by $tb(\Lambda)-1$, where $tb(\lambda)$ is the Thurston-Bennequin invariant of $\Lambda$ \cite{gompf_handlebody}.

The contact structure on $\#^k(S^1 \times S^2)$ can be obtained from the standard contact structure on $\R^3$, considered as $(\mathbb{S}^3, \xi_{\mathrm{std}})$ with one point removed, by performing $k$ $0$-framed contact surgeries along the attaching $0$-spheres of the $1$-handles. We can draw the effect of such a surgery on the standard front projection on $\R^3$ by adding two disjoint $3$-balls representing the attaching region of the $1$-handle, which act as wormholes for the attaching link of any $2$-handle.

\subsection{Handlebody calculus}
The  local modifications of a front diagram displayed in \zcref{fig:reidemeister_gompf_moves} are induced by an isotopy $\phi_t$ of the Lyapunov function, and hence give rise to Weinstein homotopic Weinstein manifolds \cite{gompf_handlebody}. These are referred to as \textbf{Legendrian Reidemeister moves} and \textbf{Gompf moves}.

\begin{figure}[htbp]
    \centering
    \def\svgwidth{\textwidth}
    \import{inkscape_images}{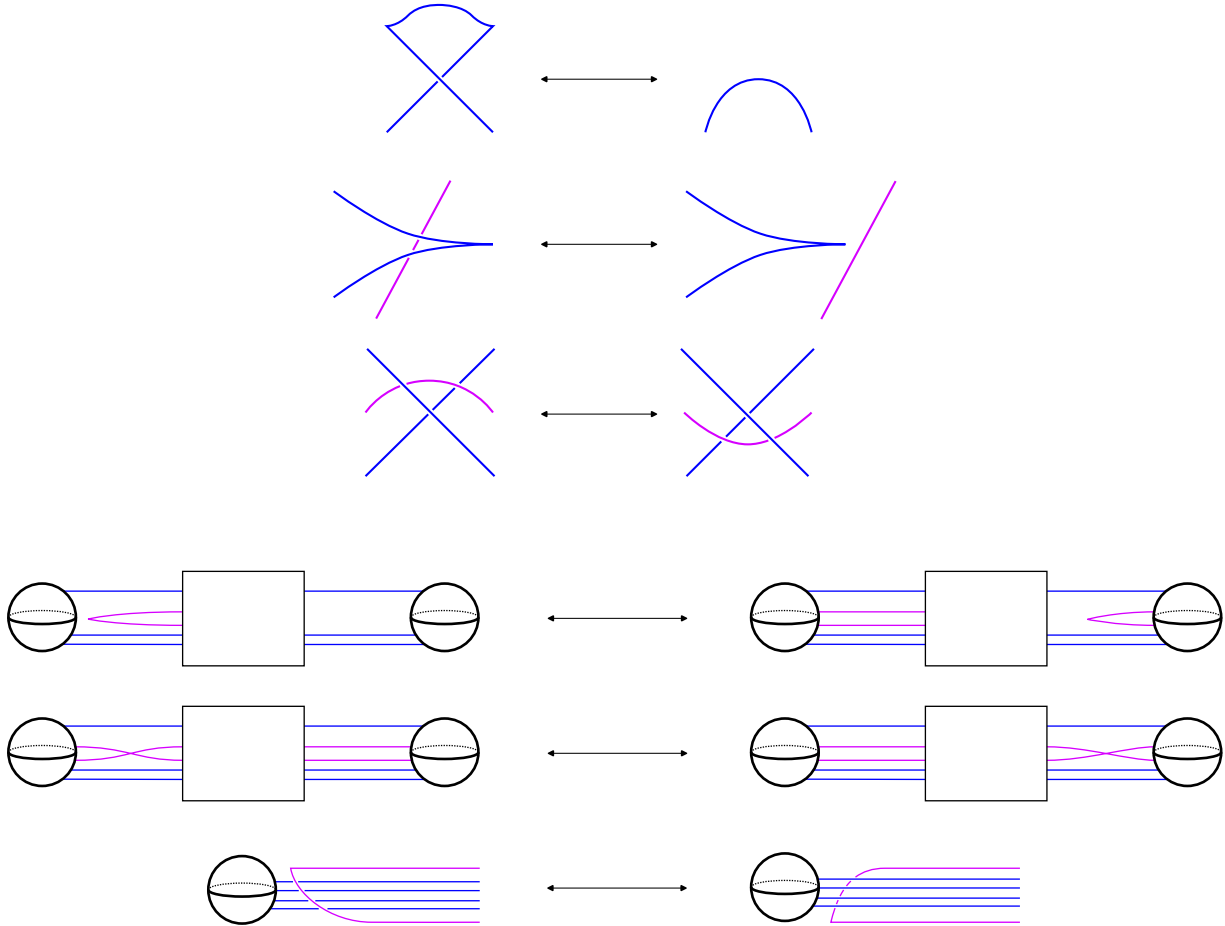}
    \caption{Legendrian Reidemeister and Gompf moves}
    \label{fig:reidemeister_gompf_moves}
\end{figure}

There are two additional moves (handle slides and cancellations) originating from the perspective of interpreting these handlebody diagrams as surgery diagrams for how to obtain the contact manifold $\del_\infty X$ by starting from $(S^3, \xi_{\mathrm{std}})$.

In general, contact-$k$ surgeries are defined for any integer $k$, and any $k$-surgery can be expressed as a sequence of $(+1)$- and $(-1)$-surgeries \cite{ding_geiges_surgery}. From this viewpoint, attaching a subcritical handle to $\del_\infty X$ has the effect of performing a contact-$0$ surgery along an $S^0$; Attaching a critical Weinstein handle corresponds to (due to the unique $tb-1$-framing) performing a $(-1)$-surgery along the Legendrian attaching sphere.

To define handle slides, suppose $\Lambda$ and $\Sigma$ are Legendrians in a contact manifold such that $\Sigma$ is diffeomorphic to a sphere, and such that there is a Reeb chord from $\Lambda$ to $\Sigma$ for some choice of contact form. Flowing $\Lambda$ along the Reeb vector field eventually creates a singular Legendrian when the length of the Reeb chord is zero, but in the contact manifold obtained by surgery along $\Sigma$, one can push $\Lambda$ \emph{past} $\Sigma$. This defines a Legendrian isotopy in the contact manifold obtained by $(\pm 1)$-surgery along $\Sigma$.

In practice, $\Lambda$ and $\Sigma$ are often the attaching spheres of two different $2$-handles. Denote by $Y_\Lambda(k)$ the result of performing contact-$k$ surgery along $\Lambda$, and, given two Legendrian attaching spheres $\Sigma$ and $\Lambda$, denote by $h_\Sigma(\Lambda)$ the Legendrian obtained by sliding $\Lambda$ along $\Sigma$. Handle slides become useful in Kirby calculus once one knows how to draw them in a front projection.

\begin{proposition}[{\cite[Proposition 2.15]{cm_lefs}}]\label{handle_slides}
    Let $(Y, \xi)$ be a contact $3$-fold, and $\Lambda, \Sigma \subset (Y, \xi)$ two disjoint Legendrian submanifolds such that $\Sigma$ is a sphere. We have that
    \begin{enumerate}
        \item The Legendrians $\Lambda$ and $h_\Sigma(\Lambda)$ in the left of \zcref{fig:slides} are Legendrian isotopic in the surgered contact manifold $Y_\Sigma(-1)$.
        \item The Legendrians $\Sigma$ and $h_\Lambda(\Sigma)$ in the right of \zcref{fig:slides} are Legendrian isotopic in the surgered contact manifold $Y_\Lambda(+1)$.
    \end{enumerate}
\end{proposition}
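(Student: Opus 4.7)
The plan is to reduce both statements to an explicit local computation in a Darboux neighborhood of the Reeb chord joining $\Lambda$ and $\Sigma$. The underlying geometric picture is that contact $(\pm 1)$-surgery along one of the Legendrians fills in a Lagrangian disc bounding it in the associated symplectic cobordism; this disc provides the extra room needed to realize the handle slide as an honest Legendrian isotopy, rather than as the singular isotopy obtained by pure Reeb flow, as noted in the discussion preceding the proposition.

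For statement (1), the first step is to fix a contact form $\alpha$ for which the given Reeb chord $\gamma$ has positive length from $\Lambda$ to $\Sigma$, and to apply the contact neighborhood theorem to $\Sigma$ jointly with $\gamma$. This identifies a tubular neighborhood with a standard open piece of $J^1(S^1)$ in which $\Sigma$ becomes the zero section and $\gamma$ is a fiber of the projection to $S^1$; the nearby arc of $\Lambda$ sits above the endpoint of $\gamma$ as a local Legendrian whose front meets that of $\Sigma$ transversely. In this model, performing $(-1)$-surgery along $\Sigma$ corresponds to attaching a standard Weinstein $2$-handle whose core is a Lagrangian disc filling $\Sigma$ in the completion.

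Next, I exhibit the isotopy by flowing $\Lambda$ along the Reeb vector field in the direction of $\gamma$. In $(Y,\xi)$ alone, this flow develops a singularity at time equal to the length of $\gamma$, when a point of $\Lambda$ collides with $\Sigma$; in $Y_\Sigma(-1)$, however, that collision point lies in the boundary of the attached Weinstein handle, and the flow can be continued by pushing $\Lambda$ across the handle neighborhood so that it reemerges on the opposite side of $\Sigma$. Tracking the front projection through this process should reproduce precisely the picture of $h_\Sigma(\Lambda)$ in \zcref{fig:slides}, after trivial Reidemeister moves.

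The main obstacle lies in matching the local front-projection output of the isotopy to the diagrammatic definition of $h_\Sigma(\Lambda)$: one must confirm that the cusps and crossings introduced by the slide agree with those produced by the Weinstein handle model, which amounts to an explicit calculation using the Liouville form on $T^*D^2$ restricted to the boundary of the handle. Statement (2) then follows by the symmetric argument: $(+1)$-surgery along $\Lambda$ is realized by the oppositely-oriented handle, so that after surgery $\Lambda$ bounds a Lagrangian disc, and flowing $\Sigma$ along the reversed Reeb chord through this disc produces the slide $h_\Lambda(\Sigma)$.
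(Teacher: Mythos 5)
The paper does not prove this statement; it cites it verbatim from \cite[Proposition 2.15]{cm_lefs} and uses it as a tool, so there is no internal argument to compare your proposal against. Assessing the proposal on its own terms: the geometric picture you set up is the correct one, and in fact matches the informal discussion that immediately precedes the proposition in the paper --- flowing $\Lambda$ along the Reeb chord towards $\Sigma$ produces a singular Legendrian in $Y$ exactly when the chord length reaches zero, whereas in $Y_{\Sigma}(-1)$ the attached Weinstein $2$-handle provides the room to continue the isotopy past $\Sigma$. The problem is that you stop short of the actual content of the proposition, which is the specific front diagram of \zcref{fig:slides}. You correctly identify ``matching the local front-projection output of the isotopy to the diagrammatic definition of $h_\Sigma(\Lambda)$'' as the main obstacle, and then do not address it. Without that local computation --- exhibiting the precise cusps and crossings of the figure as the image of $\Lambda$ under the continued Reeb-flow isotopy through the surgery torus --- the statement has not been established; all that is established is that \emph{some} Legendrian isotopy of $\Lambda$ crossing $\Sigma$ exists in $Y_\Sigma(-1)$.

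Your treatment of statement (2) also contains a genuine gap. Contact $(+1)$-surgery along $\Lambda$ is not the attachment of an ``oppositely-oriented Weinstein handle''; it is the \emph{inverse} operation to Weinstein handle attachment, realized by a cobordism running from $Y_\Lambda(+1)$ up to $Y$, not the other way. The Lagrangian disc in that cobordism is bounded by the belt sphere of the cancelling handle, not by $\Lambda$ as it sits in $Y$, and the surgery removes and reglues a full solid-torus neighborhood of $\Lambda$, so one must specify carefully which Legendrian in $Y_\Lambda(+1)$ is being isotoped to $h_\Lambda(\Sigma)$. The front-projection output is also qualitatively different --- a slide over a $(-1)$-framed sphere produces cuspidal singularities while a slide over a $(+1)$-framed sphere produces conical ones, as the paper itself emphasizes in the discussion around \zcref{dehn_fronts} and \zcref{fig:dehn_fronts}. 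Statement (2) therefore needs its own local model and computation; it does not follow by formal symmetry from statement (1).
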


\begin{figure}[htbp]
    \centering
    \def\svgwidth{\textwidth}
    %% Creator: Inkscape 1.4.2 (f4327f4, 2025-05-13), www.inkscape.org
%% PDF/EPS/PS + LaTeX output extension by Johan Engelen, 2010
%% Accompanies image file 'surgery_slides.pdf' (pdf, eps, ps)
%%
%% To include the image in your LaTeX document, write
%%   \input{<filename>.pdf_tex}
%%  instead of
%%   \includegraphics{<filename>.pdf}
%% To scale the image, write
%%   \def\svgwidth{<desired width>}
%%   \input{<filename>.pdf_tex}
%%  instead of
%%   \includegraphics[width=<desired width>]{<filename>.pdf}
%%
%% Images with a different path to the parent latex file can
%% be accessed with the `import' package (which may need to be
%% installed) using
%%   \usepackage{import}
%% in the preamble, and then including the image with
%%   \import{<path to file>}{<filename>.pdf_tex}
%% Alternatively, one can specify
%%   \graphicspath{{<path to file>/}}
%% 
%% For more information, please see info/svg-inkscape on CTAN:
%%   http://tug.ctan.org/tex-archive/info/svg-inkscape
%%
\begingroup%
  \makeatletter%
  \providecommand\color[2][]{%
    \errmessage{(Inkscape) Color is used for the text in Inkscape, but the package 'color.sty' is not loaded}%
    \renewcommand\color[2][]{}%
  }%
  \providecommand\transparent[1]{%
    \errmessage{(Inkscape) Transparency is used (non-zero) for the text in Inkscape, but the package 'transparent.sty' is not loaded}%
    \renewcommand\transparent[1]{}%
  }%
  \providecommand\rotatebox[2]{#2}%
  \newcommand*\fsize{\dimexpr\f@size pt\relax}%
  \newcommand*\lineheight[1]{\fontsize{\fsize}{#1\fsize}\selectfont}%
  \ifx\svgwidth\undefined%
    \setlength{\unitlength}{215.4557572bp}%
    \ifx\svgscale\undefined%
      \relax%
    \else%
      \setlength{\unitlength}{\unitlength * \real{\svgscale}}%
    \fi%
  \else%
    \setlength{\unitlength}{\svgwidth}%
  \fi%
  \global\let\svgwidth\undefined%
  \global\let\svgscale\undefined%
  \makeatother%
  \begin{picture}(1,0.29082463)%
    \lineheight{1}%
    \setlength\tabcolsep{0pt}%
    \put(0,0){\includegraphics[width=\unitlength,page=1]{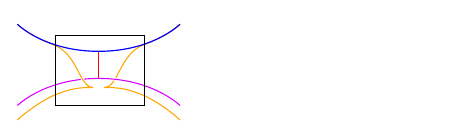}}%
    \put(0.82420925,0.09553439){\color[rgb]{0.83921569,0,1}\makebox(0,0)[lt]{\lineheight{1.25}\smash{\begin{tabular}[t]{l}$\Sigma (+1)$\end{tabular}}}}%
    \put(0.80434583,0.22754361){\color[rgb]{0,0,1}\makebox(0,0)[lt]{\lineheight{1.25}\smash{\begin{tabular}[t]{l}$\Lambda$\end{tabular}}}}%
    \put(0.82401584,0.13914827){\color[rgb]{1,0.64705882,0}\makebox(0,0)[lt]{\lineheight{1.25}\smash{\begin{tabular}[t]{l}$h_\Sigma(\Lambda)$\end{tabular}}}}%
    \put(0,0){\includegraphics[width=\unitlength,page=2]{surgery_slides.pdf}}%
    \put(0.34042553,0.10000091){\color[rgb]{0.83921569,0,1}\makebox(0,0)[lt]{\lineheight{1.25}\smash{\begin{tabular}[t]{l}$\Sigma (-1)$\end{tabular}}}}%
    \put(0.33310395,0.22169908){\color[rgb]{0,0,1}\makebox(0,0)[lt]{\lineheight{1.25}\smash{\begin{tabular}[t]{l}$\Lambda$\end{tabular}}}}%
    \put(0.34109855,0.14037984){\color[rgb]{1,0.64705882,0}\makebox(0,0)[lt]{\lineheight{1.25}\smash{\begin{tabular}[t]{l}$h_\Sigma(\Lambda)$\end{tabular}}}}%
  \end{picture}%
\endgroup%

    \caption{Handle slides over a $(\pm 1)$-framed Legendrian sphere $\Sigma$ in the front, using the Reeb chord in red. We reiterate that for Weinstein handlebody diagrams, the surgery coefficient is always $-1$; therefore, the diagram on the left describes a move which may always be applied to any surgery diagram of a Weinstein manifold. The move on the right will become useful in \zcref{dehn_fronts}.}
    \label{fig:slides}
\end{figure}

\begin{remark}
    \zcref{fig:slides} takes place in the front projection of a contact Darboux ball centred around the Reeb chord in red (we can assume the Legendrian isotopy $\Lambda_t$ has been applied for $t$ large enough so that the depicted region does indeed lie inside the Darboux ball). Outside of this neighbourhood, the Legendrian $h_\Lambda(\Sigma)$ coincides with $\Lambda$ or a small Reeb-pushoff $R_{\pm}(\Sigma)$ of $\Sigma$; the direction is that of the isotopy. In this picture, we slide blue downwards over purple, and so $h_\Sigma(\Lambda)$ lies \emph{below} $\Sigma$.
\end{remark}

The last move is called a \textbf{handle cancellation}; a $1$-handle $h_1$ and a $2$-handle $h_2$ can be attached or removed simultaneously, leaving the Weinstein manifold invariant, given that the attaching sphere of $h_2$ intersects the belt sphere of $h_1$ transversely in a single point \cite{ding_geiges_surgery}. Diagrammatically, we represent this move as depicted in \zcref{fig:cancellation}.

\begin{figure}[htbp]
    \centering
    \def\svgwidth{\textwidth}
    %% Creator: Inkscape 1.4.2 (f4327f4, 2025-05-13), www.inkscape.org
%% PDF/EPS/PS + LaTeX output extension by Johan Engelen, 2010
%% Accompanies image file 'cancellation.pdf' (pdf, eps, ps)
%%
%% To include the image in your LaTeX document, write
%%   \input{<filename>.pdf_tex}
%%  instead of
%%   \includegraphics{<filename>.pdf}
%% To scale the image, write
%%   \def\svgwidth{<desired width>}
%%   \input{<filename>.pdf_tex}
%%  instead of
%%   \includegraphics[width=<desired width>]{<filename>.pdf}
%%
%% Images with a different path to the parent latex file can
%% be accessed with the `import' package (which may need to be
%% installed) using
%%   \usepackage{import}
%% in the preamble, and then including the image with
%%   \import{<path to file>}{<filename>.pdf_tex}
%% Alternatively, one can specify
%%   \graphicspath{{<path to file>/}}
%% 
%% For more information, please see info/svg-inkscape on CTAN:
%%   http://tug.ctan.org/tex-archive/info/svg-inkscape
%%
\begingroup%
  \makeatletter%
  \providecommand\color[2][]{%
    \errmessage{(Inkscape) Color is used for the text in Inkscape, but the package 'color.sty' is not loaded}%
    \renewcommand\color[2][]{}%
  }%
  \providecommand\transparent[1]{%
    \errmessage{(Inkscape) Transparency is used (non-zero) for the text in Inkscape, but the package 'transparent.sty' is not loaded}%
    \renewcommand\transparent[1]{}%
  }%
  \providecommand\rotatebox[2]{#2}%
  \newcommand*\fsize{\dimexpr\f@size pt\relax}%
  \newcommand*\lineheight[1]{\fontsize{\fsize}{#1\fsize}\selectfont}%
  \ifx\svgwidth\undefined%
    \setlength{\unitlength}{190.79999308bp}%
    \ifx\svgscale\undefined%
      \relax%
    \else%
      \setlength{\unitlength}{\unitlength * \real{\svgscale}}%
    \fi%
  \else%
    \setlength{\unitlength}{\svgwidth}%
  \fi%
  \global\let\svgwidth\undefined%
  \global\let\svgscale\undefined%
  \makeatother%
  \begin{picture}(1,0.14029119)%
    \lineheight{1}%
    \setlength\tabcolsep{0pt}%
    \put(0,0){\includegraphics[width=\unitlength,page=1]{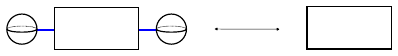}}%
    \put(0.86472729,0.06095345){\color[rgb]{0,0,0}\makebox(0,0)[lt]{\lineheight{1.25}\smash{\begin{tabular}[t]{l}$\Lambda'$\end{tabular}}}}%
    \put(0.24327554,0.06496869){\color[rgb]{0,0,0}\makebox(0,0)[lt]{\lineheight{1.25}\smash{\begin{tabular}[t]{l}$\Lambda$\end{tabular}}}}%
  \end{picture}%
\endgroup%

    \caption{Cancelling the blue $2$-handle with a $1$-handle; $\Lambda$ refers to a Legendrian tangle, possibly involving further $1$-handles and the blue attaching sphere, and $\Lambda'$ is the Legendrian tangle obtained from $\Lambda$ by removing the blue attaching sphere.
    }
    \label{fig:cancellation}
\end{figure}

\section{Symplectic Fibrations}
\label{sec:symplectic-fibrations}

We recall some background on symplectic and Lefschetz fibrations.

\begin{definition}
    Let $\Sigma \hookrightarrow X \xrightarrow{w} B$ be a smooth fibre bundle. Write $\Sigma_z = w^{-1}(z)$. We say that $w$ is a \textbf{symplectic fibration} if there exists a closed $2$-form $\Omega$ on $X$ such that $\omega_z := \Omega\vert_{\Sigma_z}$ is a symplectic form on $\Sigma_z$ for every $z \in B$. We say that such $\Omega$ are \textbf{compatible} with $w$.
\end{definition}

\begin{definition}[Symplectic connection]
    Every symplectic fibration carries a canonical connection whose horizontal spaces are defined by
    \[
        \mathcal{H}_p := (\ker\DD w(p))^{\Omega_p} \subset T_p X.
    \]
\end{definition}

The flow of the horizontal vector fields is a symplectomorphism of the fibres, whenever it is defined. It will be defined for all time if $X$, and hence $B$, is closed. In other situations (in particular, in any situation of interest in this article; $X$ compact with corners, or $X$ and $B$ noncompact Liouville manifolds), one faces the usual problem of integrability of vector fields.

\begin{definition}[Boundary components]
    Assume $\Sigma \hookrightarrow X \xrightarrow{w} B$ is a fibre bundle such that $B$ has nonempty boundary. The \textbf{horizontal boundary} is
    \[
        \del_h X := \bigcup_{z \in B} \del \Sigma_z,
    \]
    and the \textbf{vertical boundary} is given by
    \[
        \del_v X := w^{-1}(\del B).
    \]
    Observe that $\del X = \del_h X \cup \del_v X$, and that both components meet in a codimension-$2$ corner.
\end{definition}

Relevant for us are fibrations on Liouville manifolds which are symplectic away from simple singularities, i.e., Lefschetz fibrations on Liouville domains and manifolds. We follow the exposition in \cite[Section 2.2]{mclean_symp_homology}.

\begin{definition}
    An \textbf{exact symplectic Lefschetz fibration} $(X, \Omega, w)$ comprises

    \begin{itemize}
        \item a compact manifold-with-corners $X$ whose boundary is the union of two faces $\del_h X$ and $\del_v X$ meeting in a codimension-$2$ corner;
        \item an exact $2$-form $\Omega = \dd \Theta$ on $X$;
        \item a surface $S$ with nonempty boundary;
        \item and a proper map $w: X \to S$ compatible with $\Omega$. Write $\Sigma$ for the generic fibre.
    \end{itemize}
    These data are required to satisfy
    \begin{enumerate}
        \item The map $w$ is such that $\del_v X = w^{-1}(\del S)$, and $w \vert_{\del_v X}$ is a smooth fibre bundle. There is moreover a neighbourhood $\mathcal{N}(\del_h X)$ such that $w \vert_{\mathcal{N}(\del_h X)}: \del_h X \to  S$ is a product fibration $\mathcal{N}(\Sigma) \times S$, such that $\Omega_{\mathcal{N}(\del_h X)}$ and $\Theta_{\mathcal{N}(\del_h X)}$ are pullbacks from the first factor.
        \item There exists an integrable complex structure $J$ near $\mathrm{Crit}(w) \subset X$ and an integrable complex structure $j$ near $\mathrm{Critval}(w) \subset S$ such that $w$ is $(J,j)$-holomorphic on the locus where the complex structures are defined. Moreover, at any critical point, the complex Hessian $D^2 w$ is nondegenerate as a complex quadratic form. We impose that there may be at most one critical point in any fibre.
        \item $\Omega$ is Kähler for $J$ near $\mathrm{Crit}(w)$.
    \end{enumerate}

\end{definition}

To define Lefschetz fibrations on noncompact manifolds, one has to replace the triviality conditions near $\del_v X$ and $\del_h X$ with triviality conditions ``at infinity'', which ensure integrability of the parallel transport maps. For such a Lefschetz fibration $(X, \Omega, w)$, the total space $X$, the base $S$, and the fibre $\Sigma$ are open manifolds without boundary. One imposes
\begin{itemize}
    \item \textbf{Horizontal triviality:} There exists an open set $U \subset X$ such that, for all fibres $X_z$, $(X \setminus U) \cap X_z$ is compact; $w \vert_U: U \to S$ is a trivial product fibration $(U\cap \Sigma) \times S \to S$, with $\Omega\vert_U$ and $\Theta\vert_U$ being pullbacks from the first factor;
    \item \textbf{Vertical triviality:} There exists a compact set $K \subset S$ containing all critical values of $w$, such that on $w^{-1}(S \setminus K)$, $w: w^{-1}(S \setminus K) \to S \setminus K$ is a smooth fibre bundle.
\end{itemize}
$w$ is evidently also no longer proper. We impose that $\mathrm{Crit}(w)$ be finite.

Note that in both cases, $w: X \setminus \mathrm{Crit}(w) \to S \setminus \mathrm{Critval}(w)$ is a symplectic fibration, and that parallel transport is well-defined for all time. The regular fibres are therefore exact symplectomorphic as exact symplectic manifolds with boundary.

\begin{definition}
    An exact symplectic Lefschetz fibration $(X, \Omega, w)$ (for $X$ compact) is called a \textbf{Liouville Lefschetz fibration} if, for all $z \in S$, the vector field dual to $\Theta\vert_{X_z}$ is outward-pointing on the boundary $\del X_z$.
\end{definition}

Note that this implies that the regular fibres are Liouville domains. \cite[Section 2.2]{mclean_symp_homology} also explains how to complete Liouville Lefschetz fibrations $(X, \Omega, w)$ to a Lefschetz fibration on a noncompact manifold $(\hat{X}, \hat{\Omega}, w)$, such that the general fibre $(\hat{\Sigma}, \hat{\Theta}\vert_{\hat{\Sigma}})$ is the completion of $(\Sigma, \Theta\vert_{\Sigma})$.

\begin{remark}
    From \cite[Theorem 2.15]{mclean_symp_homology}, it follows that, given any Liouville Lefschetz fibration $(X, \Omega, w)$, there is a smooth family $\Omega_t$ starting at $\Omega$ such that $(X, \Omega_t, w)$ is a Liouville Lefschetz fibration for all $t$, and  that moreover $(X, \Omega_1)$ is an exact symplectic manifold such that smoothing corners yields a Liouville domain. In the following, we will always assume this transformation has been performed, so that the total spaces of our exact symplectic Lefschetz fibrations are considered to be Liouville manifolds or domains.
\end{remark}

% \begin{remark}
%     The existence of parallel transport for Lefschetz fibrations can be used to prove that two Liouville domains are exact symplectomorphic if one can express them as the fibre of a Lefschetz fibration. We will do so in \zcref{painleve_independence}, albeit using a slightly different notion of Lefschetz fibrations considered by Keating in \cite{ailsa_tori}.
% \end{remark}

% \red{Discuss with Bill how to best organize Lefschetz fibrations}

We now specify to exact symplectic Lefschetz fibrations over $S = \D \subset \C$, and their respective completions to $\C$. We start by reviewing properties and constructions related to Lefschetz fibrations.

\begin{definition}
    Let $w: X \to \D$ be an exact symplectic Lefschetz fibration and let $\gamma: [0,1] \to \D$ be a path. We call $\gamma$ a \textbf{vanishing path} if $\gamma(1)$ is a critical value of $w$, and $\gamma([0,1))$ avoids the critical values. Denote the parallel transport maps between fibres over $\gamma$  by $\rho_\gamma^t$ for $t \in [0,1)$.

    Denote the critical point in $X_{\gamma(1)}$ by $x_0$. To $\gamma$, we associate the \textbf{vanishing cycle}
    \[
        C_\gamma := \{ x \in X_{\gamma(0)} \mid \lim_{t \to 1} \rho_\gamma^t(x) = x_0 \},
    \]

    as well as the \textbf{vanishing thimble}
    \[
        T_\gamma :=  \bigcup_{s \in [0,1)}\{x \in X_{\gamma(s)} \mid \lim_{t \to 1} \rho^t_{\gamma\vert_{[s,1]}}(x) = x_0 \} \cup \{x_0\}.
    \]
\end{definition}

\begin{remark}
    Let the dimension of $X$ be $2n$. The vanishing cycles and thimbles enjoy the following properties \cite{seidelbook}:
    \begin{itemize}
        \item $C_\gamma$ is an exact Lagrangian $S^{n-1}$ in $X_{\gamma(0)}$.
        \item $T_\gamma$ is an exact Lagrangian $\D^{n}$ in $X$ whose boundary is $C_\gamma$.
    \end{itemize}
\end{remark}

\begin{definition}
    To efficiently organize the vanishing cycles into a single fibre, one chooses a \textbf{distinguished basis of vanishing paths} which is a collection of vanishing paths $\gamma_1, \ldots, \gamma_k$, one for each critical value, such that, for all $i,j$,
    \begin{itemize}
        \item $\gamma_i([0,1)) \cap \mathrm{Critval}(w) = \emptyset$,
        \item $\gamma_i((0,1]) \cap \gamma_j((0,1]) = \emptyset$ for $i \neq j$, and
        \item $\gamma_i(0) = *$ for some common reference point $* \in \C$.
    \end{itemize}
    Moreover, we require the tangent vectors $\dot{\gamma}_i(0)$ to be distinct. This condition induces a cyclic order (in our convention, counterclockwise) on the vanishing paths, which we  apply to the collection of vanishing cycles $\mathcal{C} = (C_{\gamma_1}, \ldots, C_{\gamma_k})$.
\end{definition}

In fact, if we start with a \emph{Weinstein} domain $(\Sigma, \theta)$ and a cyclically ordered collection of Lagrangian spheres $C_1, \ldots, C_k$ in $\Sigma$, one can construct a Liouville Lefschetz fibration with the given data.

\begin{proposition}[{\cite[Proposition 16.8]{seidelbook}, \cite[Definition 6.3]{giroux_pardon}}]\label{abstract_lefschetz}
    Given a Weinstein domain $(\Sigma, \theta)$ and a cyclically ordered collection of exact Lagrangian spheres $C_1, \ldots, C_k$ in $\Sigma$, there exists a Liouville Lefschetz fibration $\Sigma \hookrightarrow X \xrightarrow{w} \C$, whose generic fibre is the Weinstein domain $(\Sigma, \theta)$, and whose vanishing cycles are given by the $C_i$ with the given ordering. The total space $X$ with smoothed corners is a Weinstein domain unique up to Weinstein deformation equivalence.

    Moreover, any Weinstein domain is deformation equivalent to the total space of an abstract Weinstein Lefschetz fibration \cite[Theorem 1.10]{giroux_pardon}.
\end{proposition}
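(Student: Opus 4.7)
The plan is to build the Lefschetz fibration explicitly by plumbing together a trivial piece carrying no critical points with $k$ standard local Lefschetz models, one for each vanishing cycle $C_i$, arranged according to the given cyclic order. The starting point is the trivial Liouville Lefschetz fibration $\pi_0 : \Sigma \times \D \to \D$ with primitive $\theta \oplus \lambda_\D$, whose fibres are exactly $(\Sigma, \theta)$ and which has no critical points. Choose a basepoint $\ast \in \partial \D$ and disjoint vanishing paths $\gamma_1, \dots, \gamma_k$ in $\D$ terminating at distinct points $z_i$ in the interior, whose initial tangent vectors at $\ast$ realise the prescribed cyclic order.

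Next I would introduce a critical value at each $z_i$. Near $z_i$, take the standard holomorphic Lefschetz model $w_{\mathrm{std}}(x_1, \dots, x_n) = x_1^2 + \cdots + x_n^2$ on a neighbourhood in $\C^n$ with the restriction of the standard Kähler form; its vanishing cycle along a radial path is the standard Lagrangian $S^{n-1} \subset T^*S^{n-1}$ sitting inside a regular fibre. Since $C_i$ is an exact Lagrangian sphere in $(\Sigma, \theta)$, the Weinstein neighbourhood theorem for Lagrangian spheres identifies a neighbourhood of $C_i$ in $\Sigma$ exact-symplectomorphically with a neighbourhood of the zero section in $T^*S^{n-1}$, and this identification may be extended to a neighbourhood of the thimble $T_{\gamma_i}$ inside $\Sigma \times \D$ using parallel transport of the neighbourhood along $\gamma_i$. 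Replace the pre-image of a small disk around $z_i$ in this identified neighbourhood with the local Lefschetz model, interpolating the symplectic forms on an intermediate collar with a cutoff and appealing to a Moser-type argument to maintain exactness. The output, which I denote $w : X \to \D$, is an exact symplectic Lefschetz fibration whose generic fibre is $(\Sigma, \theta)$ and whose vanishing cycles along $\gamma_i$ are the $C_i$ in the prescribed cyclic order.

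The Liouville condition along $\partial_h X$ is inherited from the product piece on $\partial \Sigma \times \D$ (the modifications are compactly supported away from the horizontal boundary) and along $\partial_v X = w^{-1}(\partial \D)$ from the fact that the dual vector field of the primitive points outward on $\partial \D$. Smoothing corners produces the asserted Weinstein domain; completing to a fibration over $\C$ uses the standard Liouville completion recalled just before the statement. At the level of handles this construction realises $X$ as the product Weinstein domain $\Sigma \times \D$, itself a subcritical Weinstein manifold by the splitting theorem, with one critical Weinstein handle attached along the Legendrian lift of each $C_i$ to the page $\Sigma_{\gamma_i(0)}$ of the boundary open book $\mathrm{ob}(\Sigma, \theta; \mathrm{id})$, consistently with the earlier remark relating abstract Lefschetz fibrations to open books.

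For uniqueness, if $(X', w')$ is another Liouville Lefschetz fibration realising the same data, the same local-to-global recipe presents $X'$ by the identical handle decomposition (same page, same cyclically ordered Legendrian attaching link with canonical $tb-1$ framing), and the Weinstein deformation equivalence of two handlebodies with identical presentations is standard, so $X$ and $X'$ are Weinstein deformation equivalent. The main obstacle I anticipate is the gluing step, specifically arranging the interpolation between the ambient symplectic form near $T_{\gamma_i}$ and the flat Kähler form of the local model so that the resulting $2$-form remains exact, compatible with $w$, and Kähler in a neighbourhood of each critical point; this is the content of Seidel's construction in \cite[Section 16]{seidelbook} and the refinement in \cite{giroux_pardon}, whose technical heart is precisely this compatibility, and which I would invoke rather than reprove.
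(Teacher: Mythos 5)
The paper does not actually prove this proposition: it is stated with citations to \cite[Proposition 16.8]{seidelbook} and \cite[Definition 6.3]{giroux_pardon}, and the only supporting text is the Remark immediately following, which describes the total space as $\Sigma \times \D^2$ with critical Weinstein handles attached along the Legendrian lifts $\Lambda_i = \{[(x, f_i(x))]\}$ of the $C_i$ into $\mathrm{ob}(\Sigma; \mathrm{id})$. Your sketch takes a Seidel-style plumbing route (grafting local Lefschetz models onto a trivial piece), but you converge at the end on exactly this handle-attachment characterization, so the two descriptions are in agreement on the endpoint.

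Two issues are worth flagging. First, the excise-and-reglue step is more delicate than your write-up suggests. The tube $\nu(C_i) \times D(z_i,\epsilon)$ you propose to cut out carries \emph{trivial} monodromy over $\partial D(z_i,\epsilon)$, whereas the local model $\{x_1^2 + \cdots + x_n^2 = c\}$ restricted to the same boundary circle has monodromy a Dehn twist along $C_i$; hence the natural boundary identification does not commute with the projections to $\D$, and the Moser interpolation you invoke handles only the symplectic forms, not this fibration-level mismatch. One must either twist the gluing by the Dehn twist or, cleaner and truer to the references, recast the operation as a critical Weinstein handle attachment along $\partial(\Sigma \times \D^2)$ rather than a replacement performed in the interior --- which is what Seidel's and Giroux--Pardon's constructions actually do. Your uniqueness paragraph presupposes this handle picture anyway. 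Second, and more substantively, you do not address the final sentence of the statement at all: that \emph{every} Weinstein domain is deformation equivalent to the total space of an abstract Weinstein Lefschetz fibration is \cite[Theorem 1.10]{giroux_pardon}, a genuine existence theorem well beyond the reach of the explicit construction you sketch, and it needs to be cited (or proved) on its own.
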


\begin{remark}
    The total space $X$ is given by attaching to $\Sigma \times \D^2$ Weinstein handles along the Legendrian lifts $\Lambda_i$ to $\mathrm{ob}(\Sigma; \mathrm{id})$ of the vanishing cycles $C_i$ given by
    \[
        \Lambda_i = \{ [(x, f_i(x))] \in \Sigma \times S^1 \subset \mathrm{ob}(\Sigma; \mathrm{id}) \mid x \in C_i \};
    \]
    The function $f_i: C_i \to \R$ is a primitive of $\theta\vert_{C_i}$, that is, such that $\theta\vert_{C_i} = \dd f_i$, and such that the images $f_j(C_j) \subset S^1$ are disjoint and respect the ordering of $(C_1, \ldots, C_k)$ when traversing $S^1$ counterclockwise. We refer to \cite[Definition 6.3]{giroux_pardon} for more details.
\end{remark}

Abstract Weinstein Lefschetz fibrations can naturally be completed to Lefschetz fibrations on the completed Weinstein manifold. We will not distinguish between Leschetz fibrations on domains and their completions in the rest of the paper.

It is known moreover that the following operations on abstract Weinstein Lefschetz fibrations induce deformation equivalences of their total spaces:

\begin{itemize}
    \item \textbf{Deformation:} Simultaneous Weinstein deformation of $(\Sigma, \theta)$ and exact Lagrangian isotopy of $(C_1, \ldots, C_k)$.
    \item \textbf{Cyclic permutation:} Exchange $(C_1, \ldots, C_k)$ with $(C_k, C_1, \ldots, C_{k-1})$.
    \item \textbf{Hurwitz moves:} Exchange $\mathbf{C}:= (C_1, \ldots, C_{i-1}, C_i, \ldots, C_k)$ with either
          \begin{align*}
              \mathcal{H}_i(\mathbf{C})&:= (C_1, \ldots, C_{i-2}, C_{i}, \tau_{C_{i}}(C_{i-1}), C_{i+1}, \ldots, C_k)\\
              \text{or} \quad \mathcal{H}_i^{-1}(\mathbf{C})&:=(C_1, \ldots, C_{i-2}, \tau^{-1}_{C_{i-1}}(C_{i}), C_{i-1}, C_{i+1}, \ldots, C_k).
          \end{align*}
\end{itemize}

\begin{remark}
    The indexation for the Hurwitz moves depends on one's convention of whether one should attach handles along the $C_k$ in clockwise or counterclockwise direction; The convention employed in this paper is that they should be attached counterclockwise.
\end{remark}

When applying \zcref{recipe_3_3}, we will make intensive use of these moves, and moreover one more move on vanishing paths leaving the corresponding thimble invariant.

\begin{proposition}[V-moves, {\cite{cm_lefs}}]
    Consider the blue and purple vanishing paths in \zcref{fig:vmove}, and denote them by $\gamma_0$ and $\gamma_1$. Assume that the vanishing cycles $C_{\gamma_0}, C_{\gamma_1}$ intersect transversely in a single point in their common fibre. Then the vanishing thimbles $T_{\gamma_0}$ and  $T_{\gamma_1}$ are Lagrangian isotopic, and the isotopy is supported in the preimage of a small disk containing the two critical values.
\end{proposition}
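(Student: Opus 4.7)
The plan is to localize to a small disk $D \subset S$ containing both critical values $p_0 := \gamma_0(1)$ and $p_1 := \gamma_1(1)$ — in which the isotopy is required to be supported — and to reduce to a standard $A_2$-type local model in which the isotopy can be written down explicitly. The first step is to identify a neighborhood of the singular Lagrangian $T_{\gamma_0} \cup T_{\gamma_1}$ in $w^{-1}(D)$, in a fiber-preserving way, with the analogous configuration in the standard $A_2$ Milnor fiber fibered over a disk. This relies on a Weinstein neighborhood theorem for $T_{\gamma_0} \cup T_{\gamma_1}$, whose underlying topological type (by the hypothesis of a single transverse intersection of the vanishing cycles) is a wedge of two copies of $D^n$ glued at one interior point, made compatible with the standard local holomorphic form of $w$ near the Lefschetz critical points where the thimbles terminate.

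Having passed to the local model, I would observe that a smooth isotopy $\{\varphi_s\}_{s \in [0,1]}$ of self-diffeomorphisms of $D$, beginning at the identity and ending at a half-twist exchanging $p_0$ and $p_1$, lifts by fiber-preserving symplectic parallel transport (suitably corrected near the critical points to handle passage through them) to a family of symplectomorphisms $\Phi_s$ of $w^{-1}(D)$. Choosing $\{\varphi_s\}$ so that $\varphi_s \circ \gamma_0$ is a vanishing path for every $s$ with $\varphi_1 \circ \gamma_0 = \gamma_1$, the family $\{T_{\varphi_s \circ \gamma_0}\} = \{\Phi_s(T_{\gamma_0})\}$ is the desired Lagrangian isotopy from $T_{\gamma_0}$ to $T_{\gamma_1}$, automatically supported in $w^{-1}(D)$. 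In the $A_2$ model this family can be made entirely explicit using the standard braid-group action on the base, so the construction of $\{\varphi_s\}$ poses no difficulty.

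The hard part is the identification in the first step. The germ of a symplectic structure along a singular Lagrangian is not a priori determined by the intersection combinatorics; one has to verify that the relevant cohomological obstructions vanish — which they do for $n \geq 2$, since $H^1$ of a wedge of two $D^n$'s joined at an interior point is trivial — and that the identification can be performed compatibly with the Lefschetz fibration near the critical points. The latter compatibility is handled by a relative Moser argument, using the fact that both $w$ and the model fibration are standard in Darboux coordinates around each Lefschetz critical point by the very definition of an exact symplectic Lefschetz fibration. Once the local model is in place, the explicit half-twist in the base completes the argument.
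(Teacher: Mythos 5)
The paper quotes this result from \cite{cm_lefs} without reproducing a proof, so there is no internal argument to compare against; what follows evaluates the proposal on its own terms.

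Your overall plan---localize to a small disk $D$ around the two critical values and reduce to an explicit $A_2$-type local model---is the right kind of idea, but two of the steps as written have genuine gaps. First, the topological type of $T_{\gamma_0}\cup T_{\gamma_1}$ is misidentified. Since $\gamma_0$ and $\gamma_1$ share only the basepoint, the two thimbles meet exactly at the transverse intersection of $C_{\gamma_0}$ and $C_{\gamma_1}$, and this point lies on $\partial T_{\gamma_i}=C_{\gamma_i}$, the \emph{boundary} of each disk, not in the interior. Two $n$-disks joined at a shared boundary point are not the same germ as two disks joined at an interior point, so the Weinstein/Moser identification you build against the interior-wedge model is aimed at the wrong local shape. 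Second, and more seriously, the family $\{\Phi_s\}$ lifting the half-twist isotopy does not exist as a family of fiber-preserving symplectomorphisms: for $s\in(0,1)$ the diffeomorphism $\varphi_s$ carries the critical value $p_0$ to a regular value, so a fiber-preserving lift would have to send the nodal fiber $w^{-1}(p_0)$ to a smooth one, which is impossible. Relatedly, $\varphi_s\circ\gamma_0$ is not a vanishing path for intermediate $s$, so $T_{\varphi_s\circ\gamma_0}$ is not even defined, and the equality $\{T_{\varphi_s\circ\gamma_0}\}=\{\Phi_s(T_{\gamma_0})\}$ cannot be asserted. The parenthetical ``suitably corrected near the critical points'' therefore conceals the entire content of the proof: one must drop fiber-preservation in a full neighborhood of the singular fibers and then directly verify that the resulting Lagrangian family terminates at $T_{\gamma_1}$---a verification that is not supplied. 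A more robust route (and closer in spirit to what the cited source does) is to identify $w^{-1}(D)$ with the local model outright, write both thimbles in model coordinates, and exhibit the isotopy between them explicitly, rather than trying to lift an isotopy of the base through times at which the endpoint of the path is not a critical value.
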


\begin{figure}[htbp]
    \centering
    \def\svgwidth{0.3\textwidth}
    \import{inkscape_images}{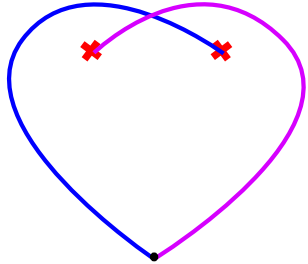}
    \caption{A V-move in the complex plane. Critical values are indicated with red crosses; the thimbles over the blue and purple vanishing paths are Lagrangian isotopic in the total space.}
    \label{fig:vmove}
\end{figure}

% Essentially, the total space $X$ is obtained by attaching Weinstein handles to the subcritical domain $\Sigma \times \D^2$ along Legendrian lifts $\Lambda_i$ of the vanishing cycles, given by
% \[
%     \Lambda_i = \{ (x, f_i(x)) \in \mathrm{ob}(\Sigma, \theta; \mathrm{id}) \mid x \in C_i \};
% \]
% Here, $f_i: L \to \C$ is a potential such that $\theta\vert_{C_i} = \dd f_i$. The notation $(x, f_i(x)) \in \mathrm{ob}(\Sigma, \theta; \mathrm{id})$ refers to a point $x$ in the $f_i(x)$-page of the open book; equivalently, it is an element in the region of
% \[
%     \mathrm{ob}(\Sigma, \theta; \mathrm{id}) \cong \Sigma \times [0,2\pi]/\left((x,0)\sim(x,2\pi)\right) \cup \del \Sigma \times \D^2
% \]
% which is the (in this case trivial) mapping torus, $\Sigma \times [0,2\pi]/((x,0)\sim(x,2\pi)) \cong \Sigma \times S^1$.

% \red{mention at some point that vanishing cycles need to be equipped with a diffeomorphism to the sphere}

Liouville Lefschetz fibrations hence give rise to a natural source of exact Lagrangian spheres in the fibres, as well as exact Lagrangian balls in the total space. The following construction produces Lagrangian spheres in the total space.

\begin{definition}[Matching cycles]
    Let $w: X \to \C$ be a Liouville Lefschetz fibration, and let $\gamma_0, \gamma_1$ be two vanishing paths to critical values $y_0, y_1 \in \C$ starting at the same point, which are moreover such that the concatenation $\mu := \overline{\gamma_0}\#\gamma_1$ is smooth. Assume that $C_{\gamma_0}$ and $C_{\gamma_1}$ are hamiltonian isotopic in $X_{\gamma_0(0)} = X_{\gamma_1(0)}$. Such a path $\mu$ is called a \textbf{matching path}.

    It is then possible to extend the isotopy to a hamiltonian isotopy of $X$ supported in a neighbourhood of $X_{\gamma_0(0)}$, identifying $C_{\gamma_0}$ and $C_{\gamma_1}$. The \textbf{matching cycle} $S_\mu$ associated to $\mu$ is the union of $T_{\gamma_0}$ and $T_{\gamma_1}$, glued along their boundaries via the above isotopy. $S_\mu$ is an exact Lagrangian sphere in $X$.
\end{definition}

Given two matching paths $\mu$ and $\nu$, we can perform \textbf{plane twists} $\tau_\nu^{\pm1}(\mu)$, which are symplectomorphisms of $\C$ supported in an arbitrarily small neighbourhood of $\nu$, defined pictorially in \zcref{fig:plane_twist}. The following lemma confirms that $\tau_\nu(\mu)$ is also  a matching path.

\begin{figure}[htbp]
    \centering
    \def\svgwidth{0.6\textwidth}
    \import{inkscape_images}{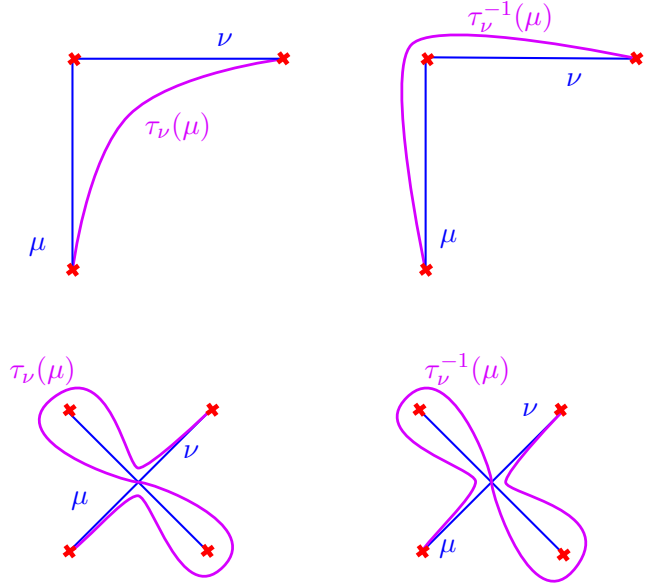}
    \caption{The plane twists $\tau_\nu(\mu)$ (left) and $\tau_\nu^{-1}(\mu)$ (right).}
    \label{fig:plane_twist}
\end{figure}

\begin{lemma}\label{matching_paths_dehn_twist}\cite[Lemma 16.13]{seidelbook}
    Let $\mu$ and $\nu$ be two matching paths intersecting transversely. Then the plane twist $\tau_\nu \mu$ is a  matching path, and we have for the matching cycles that
    \[
        S_{\tau_\nu \mu} = \tau_{S_{\nu}}(S_{\mu}),
    \]
    where $\tau_{S_{\nu}} \in \mathrm{Symp}^c(X)$ is the symplectic Dehn twist along $S_\nu$.
\end{lemma}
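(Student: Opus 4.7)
The plan is to construct a compactly supported symplectomorphism $\Phi$ of $X$ which covers the base plane twist $\tau_\nu$ and is Hamiltonian isotopic to the Dehn twist $\tau_{S_\nu}$. Once $\Phi$ is in hand, the two thimbles $T_{\mu_0}, T_{\mu_1}$ comprising $S_\mu$ get sent by $\Phi$ to Lagrangian disks fibered over the two halves of $\tau_\nu \mu$, with their boundaries matched up in the central fibre via the restriction of $\Phi$. This simultaneously shows that $\tau_\nu \mu$ is a matching path and that $S_{\tau_\nu \mu} = \Phi(S_\mu) = \tau_{S_\nu}(S_\mu)$.

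To construct $\Phi$, I would localize. Since $\tau_\nu$ is supported in an arbitrarily small neighbourhood $U$ of $\nu$, I may assume $U$ contains no critical values of $w$ other than the two endpoints $y_0, y_1$ of $\nu$. Choose small disks $D_i \subset U$ centred at $y_i$ on which $\tau_\nu$ restricts to a full rotation. On $U \setminus (D_0 \cup D_1)$, the fibration is a smooth symplectic fibre bundle, so the identity-to-$\tau_\nu$ isotopy lifts horizontally via the symplectic connection. Over each $D_i$, the standard local Morse model $(z_1, \ldots, z_n) \mapsto z_1^2 + \cdots + z_n^2$ shows that the base rotation $z \mapsto e^{i\theta} z$ lifts to the fibrewise unitary action $(z_j) \mapsto (e^{i\theta/2} z_j)$, which extends smoothly across the critical fibre. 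A compactly supported Hamiltonian correction patches the two lifts along $\partial D_i$ to produce $\Phi$.

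Verifying that $\Phi(S_\mu)$ is indeed the matching cycle of $\tau_\nu \mu$ is then essentially formal: as a fibered symplectomorphism over $\tau_\nu$, $\Phi$ sends each thimble over a half of $\mu$ to a thimble over the corresponding half of $\tau_\nu \mu$. Their boundary vanishing cycles in $X_{\mu(0)}$ remain Hamiltonian isotopic after applying $\Phi|_{X_{\mu(0)}}$, so $\Phi(S_\mu)$ is a smooth exact Lagrangian sphere and $\tau_\nu \mu$ is a matching path whose matching cycle is $\Phi(S_\mu)$.

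The main obstacle is the identification $\Phi \simeq \tau_{S_\nu}$ in $\pi_0 \, \mathrm{Symp}^c(X)$. The support of $\Phi$ deformation retracts onto a neighbourhood $V$ of $S_\nu$; inside $V$, the Weinstein neighbourhood theorem identifies $V$ with a neighbourhood of the zero section in $T^*S^{n-1}$, and the restriction $w|_V$ with the standard nodal Lefschetz model with two nearby critical values. In these coordinates, the explicit horizontal lift of a full loop in the base encircling both critical values of $\nu$ agrees, after a compactly supported Hamiltonian isotopy, with Seidel's model of the Dehn twist $\tau_{S^{n-1}}$ built from the normalized geodesic flow on $T^*S^{n-1}$. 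This local model calculation, which is the true crux of the lemma, pins down $\Phi$ inside the mapping class group of compactly supported symplectomorphisms; the global conclusion then follows from the local support of both $\Phi$ and $\tau_{S_\nu}$.
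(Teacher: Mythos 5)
The paper does not actually prove this statement: it is quoted verbatim from Seidel's book (Lemma~16.13) and invoked as a black box, so there is no ``paper's proof'' to compare against. Your outline does, however, follow the structure of Seidel's own argument fairly closely: construct a compactly supported fibered lift $\Phi$ of the plane twist $\tau_\nu$, observe formally that $\Phi(S_\mu)$ fibers over $\tau_\nu(\mu)$ with matched boundary vanishing cycles (so $\tau_\nu\mu$ is a matching path with $S_{\tau_\nu\mu}=\Phi(S_\mu)$), and then reduce the identification $\Phi\simeq\tau_{S_\nu}$ to an explicit local-model computation near $S_\nu$. That overall architecture is correct, and your first two paragraphs are sound; the genuine content is entirely in the last step, as you rightly say.

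The place where the write-up goes wrong is precisely in that last step. You say that ``the explicit horizontal lift of a full loop in the base encircling both critical values of $\nu$ agrees \ldots\ with Seidel's model of the Dehn twist $\tau_{S^{n-1}}$.'' But the horizontal lift (symplectic monodromy) around a loop enclosing both endpoints of $\nu$ is a symplectomorphism of a single \emph{fibre} $X_z$, and by Picard--Lefschetz it is Hamiltonian isotopic to $\tau_{C}\circ\tau_{C'}\simeq\tau_C^2$ for the common vanishing cycle $C$. The Dehn twist $\tau_{S_\nu}$ is a compactly supported symplectomorphism of the \emph{total space} $X$, and the thing that has to be compared to it is the lift $\Phi$ of the half-twist $\tau_\nu$ itself (a diffeomorphism of the base, not a loop). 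These are genuinely different objects, and the Picard--Lefschetz monodromy relation by itself does not pin down $\Phi$ in $\pi_0\,\mathrm{Symp}^c(X)$. What is actually needed is the computation in Seidel's standard local model for a matching cycle (the fibration on a neighbourhood of $T^*S^{n-1}$ given essentially by $(u,v)\mapsto \|v\|^2-\|u\|^2+2i\langle u,v\rangle$), showing directly that the lift of the half-twist in the base is Hamiltonian isotopic to the model Dehn twist built from normalized geodesic flow. As written, your crux paragraph substitutes the fibre monodromy for the lift of the plane twist, which is the one conflation that the proof cannot afford; with that corrected, the sketch is an accurate summary of Seidel's argument.
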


\subsection{Lefschetz Fibrations on Affine Varieties}

Recall from \zcref{ex:weinstein_on_variety} and \zcref{ex:ample_weinstein} that smooth affine varieties $X \subset \C^n$ are naturally Weinstein manifolds, and thus admit a presentation as total spaces of abstract Weinstein Lefschetz fibrations. A plentiful supply of Lefschetz fibrations on affine varieties is obtained from the following construction originally found in the smooth setting in \cite{seidel_smith_fukaya_cotangents},\cite[Chapter 19]{seidelbook} and \cite[Section 8]{mclean_symp_homology}.

\begin{definition}[Lefschetz pencils]
    Let $\overline{X}$ be a complex projective variety (possibly singular), and consider an ample line bundle $\mathcal{L} \to \overline{X}$. Choose two sections $s_\infty, s_0 \in H^0(\overline{X}, \mathcal{L})$. Define the rational function
    \[
        p = \frac{s_0}{s_\infty} : \overline{X} \dashrightarrow \P^1,
    \]
    and call it a \textbf{Lefschetz pencil} if
    \begin{itemize}
        \item $s_\infty^{-1}(0) =: D$ is a normal crossing divisor;
        \item $X:= \overline{X} - D$ is smooth;
        \item $s_0^{-1}(0)$ is reduced, smooth near $D$ and intersects $D$ transversely;
        \item The \textbf{axis} $A := \{s_0 = s_\infty = 0\} \subset \overline{X}$ is smooth and contained in $\overline{X}^{sm}$;
        \item $p\vert_X: X \to \C$ only has nondegenerate critical points, at most one in each fibre.
    \end{itemize}
\end{definition}

Notice that $X$ as well as fibres of $p\vert_X$ are smooth complex affine varieties, making $p\vert_X$ into a symplectic fibration away from the critical values. Recall that there is a canonical symplectic connection associated to a symplectic fibration; due to noncompactness of the total space, failure of  the horizontal vector fields to be integrable may, a priori, prevent $p\vert_X$ from being a Lefschetz fibration.

We prove that parallel transport maps are in fact well-defined for all paths avoiding the critical values. Essentially the same proof goes through as for the smooth case in \cite[Lemma 2]{seidel_smith_fukaya_cotangents}, but we repeat the argument here for convenience.

\begin{lemma}
    Let $p: \overline{X} \dashrightarrow \P^1$ be a Lefschetz pencil. For any path $\gamma \subset \C$ avoiding the critical values, the parallel transport maps $X_{\gamma(0)} \to X_{\gamma(1)}$ are well-defined.
\end{lemma}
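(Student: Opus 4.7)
The strategy is to show that the horizontal vector field for $p|_X$ has norm vanishing sufficiently fast at the ``boundary'' $D \cup A$ of $X$ inside $\overline X$ to prevent horizontal lifts from escaping $X$ in finite time, adapting the argument from \cite{seidel_smith_fukaya_cotangents} for the smooth case.

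The key preliminary observation is that $\omega_{\phi_\mathcal{L}}$ extends smoothly across $D$ to a Kähler form on $\overline X$: in local coordinates where $s_\infty = z_1$, one has $\phi_\mathcal{L} = -\log|z_1|^2 - \log h$ for a smooth positive function $h$ (the fibre metric), so $\omega_{\phi_\mathcal{L}} = \dd \dd^\C \log h$ on $X$, which is the curvature form of the metric on $\mathcal{L}$ and hence extends smoothly to all of $\overline X$. Call this extension $\omega_{\overline X}$; the horizontal vector field for $p$ with respect to $\omega_{\overline X}$ is defined on $\overline X \setminus (A \cup \mathrm{Crit}(p))$ and coincides on $X$ with the one for $\omega_{\phi_\mathcal{L}}$. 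For $\gamma$ avoiding critical values, the only obstruction to completeness of a horizontal lift $\tilde\gamma$ starting in $X$ is its reaching $D$ or $A$ in finite time.

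I would rule this out by local estimates on $\|\tilde v\|_g = |\dot\gamma|/\|\dd p\|_g$, where $\tilde v$ is the horizontal lift of $\dot \gamma$ and $g$ is the associated Kähler metric. Near a point of $D \setminus A$, a local coordinate with $s_\infty = z_1$ and $s_0$ nonvanishing gives $p = u/z_1$ for a unit $u$, so $\|\dd p\|_g \sim 1/|z_1|^2$ and $\|\tilde v\|_g \sim \mathrm{dist}(\cdot, D)^2$. Near a point of $A$, the transversality and reducedness conditions in the pencil definition produce adapted coordinates with $s_\infty = z_1 u_1$ and $s_0 = z_2 u_2$ for units $u_i$, so $p \sim z_2/z_1$ and $\|\dd p\|_g \sim \sqrt{1+|p|^2}/|z_1|$, giving $\|\tilde v\|_g \sim \mathrm{dist}(\cdot, A)/\sqrt{1+|p|^2}$. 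Since $\gamma([0,1]) \subset \C$ is compact, the factor $\sqrt{1+|p|^2}$ is bounded along $\tilde \gamma$, so in both cases $\|\tilde v\|_g$ vanishes at least linearly with the distance to $D \cup A$.

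A Grönwall-type estimate then concludes: writing $\rho(t) := \mathrm{dist}(\tilde\gamma(t), D \cup A)$, one has $|\dot\rho(t)| \leq \|\tilde v(t)\|_g \leq C \rho(t)$ for some constant $C$ depending on $\gamma$, whence $\rho(t) \geq \rho(0) e^{-Ct}$ remains positive for all finite $t$, and $\tilde\gamma$ extends to $[0,1]$ as a curve in $X$. The main obstacle is the local analysis near the axis $A$, where $p$ is indeterminate; the transversality conditions on the pencil are essential to pick coordinates in which the required estimate is elementary, but once chosen, the calculation itself is short.
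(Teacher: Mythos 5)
Your proof is correct, but it takes a genuinely different route from the paper's. The paper also works with $\xi = \nabla p / \lVert \nabla p \rVert^2$, but instead of estimating $\lVert \xi \rVert$ itself, it bounds $\lvert \dd h(\xi) \rvert$ where $h$ is the exhausting Weinstein potential: after a Cauchy-Schwarz argument in local coordinates adapted to the normal-crossings structure of $D$ near the axis $A$, it obtains $\lvert \dd h(\xi) \rvert \lesssim 1/\lvert p \rvert$, which blows up only as $p \to 0$; to handle paths through $0$ it then replaces $p$ by $p+c$ (same connection, shifted fibre). Since $h$ is proper and grows at most linearly in time along trajectories, these stay in compact sublevel sets and parallel transport is complete. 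You instead exploit a nicer geometric fact: $\omega_{\phi_\mathcal{L}}$ coincides on $X$ with the curvature form of the metric on $\mathcal{L}$ and therefore extends to a smooth Kähler form on the compact $\overline{X}$. You then bound $\lVert \tilde v \rVert_g$ directly by $\mathrm{dist}(\cdot, D)$ in this extended metric and conclude via Grönwall. Your approach buys two things: it avoids the extra computation with $\nabla h$ entirely, and the $\sqrt{1+\lvert p\rvert^2} \geq 1$ factor removes the need for the shifting trick at $p=0$. On the other hand, your local-coordinate estimate near $A$ is stated only for reduced $s_\infty$ (``$s_\infty = z_1 u_1$''), whereas the paper works with $s_\infty = x_1^{m_1}\cdots x_k^{m_k}$ to cover the full normal-crossings-with-multiplicity case that the definition of a Lefschetz pencil allows; the estimate still holds in that generality (one gets $\lVert \dd p\rVert \gtrsim \lvert p\rvert/\lvert x_j\rvert$ for each $j \leq k$, hence $\lVert \tilde v\rVert \lesssim \min_j \lvert x_j\rvert \sim \mathrm{dist}(\cdot,D)$ once $\lvert p\rvert$ is bounded), but the version you wrote would need this adjustment to match the paper's generality. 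Apart from that gap in the local model, the argument is sound.
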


\begin{proof}
    We must show that trajectories of the horizontal vector fields do not escape to $D$ in finite time. The symplectic form on $X$ can be written as $\omega = -\dd \dd^\C h$, where $h = -\mathrm{log}\norm{\cdot}$ for a Kähler metric on $X$ obtained through ampleness of $\mathcal{L}$ as in \zcref{ex:ample_weinstein}. Recall that $h$ is an exhausting function for the Weinstein structure on $X$; hence establishing a bound on $\abs{\dd h (X_\gamma)}$ for a horizontal vector field $X_\gamma$ along $\gamma$ suffices to prove integrability of $X_\gamma$.

    To this end, let $\xi \in \mathfrak{X}(X)$ denote the unique horizontal lift of the vector field $\del_z$ on $\C$ away from the critical values and consider the gradient $\nabla p$ with respect to the Kähler metric. Then $Dp(\nabla p) = \dd p (\nabla p) \del_z = \norm{\nabla p}^2 \del_z$. Moreover, $\nabla p$ is  horizontal: For $w \in \ker \dd p$, we have $\omega(\nabla, w) = \langle \nabla p, Jw \rangle = \dd p (J w) = J \dd p(w) = 0$. It follows that

    \[
        \xi = \frac{\nabla p}{\norm{\nabla p}^2}.
    \]

    Now consider a point $\zeta \in A$. By smoothness of $A$ and $\overline{X}$ near $A$, as well as the assumption that $s_0^{-1}(0)$ intersects $D$ transversely and that $D$ is normal crossings, there exists a neighbourhood of $\zeta$ in $\overline{X}$ with local coordinates $(x_1, \ldots, x_n)$ on which

    \[
        s_0(x) = x_{k+1}, \quad s_{\infty}(x) = x_1^{m_1}\cdots x_{k}^{m_k}.
    \]

    In these coordinates, the Kähler metric is $\norm{\cdot}^2 = e^\sigma \abs{\cdot}^2$, where $\abs{\cdot}$ is the standard Euclidean metric and $\sigma$ is some smooth function. Hence $h = - \log \abs{s_\infty}^2 - \sigma$.

    One checks that
    \[
        \norm{\nabla p} \stackrel{\geq}{\sim} \abs{\pdev[p]{x_j}} \stackrel{\geq}{\sim} \frac{\abs{p}}{\abs{x_j}},
    \]
    and
    \[
        \norm{\nabla s_\infty} \stackrel{\leq}{\sim} \sum_{j=1}^k \frac{\abs{s_\infty}}{\abs{x_j}},
    \]
    where the notation $\stackrel{\geq}{\sim}$ indicates that the inequalities hold up to a positive multiplicative constant, mainly related to the comparison between $\norm{\cdot}$ and $\abs{\cdot}$. We can now estimate $\abs{\dd h (\xi)}$:
    \begin{align*}
        \abs{\dd h (\xi)} & = \frac{\abs{\langle \nabla p, \nabla h\rangle}}{\norm{\nabla p}^2}                                                                                           \\
                          & = \frac{\abs{\langle \nabla p, -\frac{2\abs{s_\infty}}{\abs{s_\infty}^2} \nabla s_\infty - \nabla \sigma \rangle}}{\norm{\nabla p}^2}                                                   \\
                          & \leq \frac{\abs{\langle \nabla p, \nabla \sigma \rangle}}{\norm{\nabla p}^2} + \frac{2\abs{\langle \nabla p, \nabla s_\infty \rangle}}{\norm{\nabla p}^2 \abs{s_\infty}} \\
                          & \stackrel{\leq}{\sim} \frac{1}{\norm{\nabla p}} + \frac{\norm{\nabla s_\infty}}{\norm{\nabla p} \abs{s_\infty}}.                                                            \\
    \end{align*}
    The last inequality uses Cauchy-Schwarz on both terms. Substituting the bounds obtained on $\norm{\nabla p}$ and $\norm{\nabla t}$, we obtain a $\frac{const.}{\abs{p}}$-bound on each summand.

    Now consider the flow of $\xi$ along $p^{-1}([a,b])$ for a horizontal segment $[a,b] \subset \C$ avoiding the critical values. Recall that the closure of any fibre $\overline{p^{-1}(z)} = \{s_0(x) = z s_{\infty}(x) \}$ meets precisely in $A$, so if a point in $p^{-1}([a,b])$ were to escape to $\zeta \in D$ in finite time under the flow of $\xi$, it would in fact escape to $A$. Assume for now that $0 \notin [a,b]$, so that we get a bound on $\frac{1}{\abs{p}}$.

    Cover the axis $A$ by finitely many coordinate patches as considered above, so that one obtains a bound on $\abs{\dd h (\xi)}$ on all of $p^{-1}([a,b])$ (in a neighbourhood of $A$ in $p^{-1}([a,b])$, one has the bounds established above, and outside this neighbourhood boundedness follows by compactness). Hence no trajectory of $\xi$ along any horizontal path in $\C$ away from $\mathrm{Critval}(p) \cup \{0\}$ escapes to $D$ in finite time.

    To remove the hypothesis on $0$, notice that for any constant $c$, $p+c$ is a symplectic fibration with the same parallel transport transport maps as $p$. Hence, by changing local coordinates, one can replace the $\frac{1}{\abs{p}}$-bound by a $\frac{1}{\abs{p+c}}$-bound, and choosing $c \notin [a,b]$ establishes boundedness of $\dd h (\xi)$ also for $0 \in [a, b]$.

    For an arbitrary path $\gamma \subset \C - \mathrm{Critval}(p)$, we have that $\dot{\gamma}$ is a complex multiple of $\del_z$, and thus the parallel transport vector field $X_\gamma$ is a complex multiple of $\xi$. Trajectories are well-behaved for $\xi$, and thereby for $X_\gamma$.
\end{proof}

\subsection{Lefschetz Bifibrations}

A Lefschetz bifibration should be thought of as a Lefschetz fibration $\pi: X \to \C$ equipped with a family $\rho_z: X_z \to \C$ of Lefschetz fibrations on the fibres. Their primary utility is that, given a vanishing path $\gamma$ for $\pi$, one can compute an explicit matching path $\mu$ for $\rho_{\gamma(0)}$; the matching cycle $S_\mu \subset X_{\gamma(0)}$ is the vanishing cycle $C_\gamma$ of $\pi$.

We restrict to the setting of affine varieties, as treated by \cite{parker}. For the general case, see \cite[(15e)]{seidelbook}.

\begin{definition}
    Let $X$ be a complex affine variety. A \textbf{Lefschetz bifibration} is a commutative diagram, satisfying the following properties:
    \[
        \begin{tikzcd}
            X \arrow[rr, bend left, "\pi"] \arrow{r}{\rho} & \mathcal{S}:=\C \times \C \arrow{r}{\psi}& \C,
        \end{tikzcd}
    \]
    \begin{enumerate}
        \item $\pi = \psi \circ \rho$ is a Lefschetz fibration, and $\psi$ has no critical points.
        \item For $z \in \C$, write $X_z = \pi^{-1}(z)$, and $\mathcal{S}_z = \psi^{-1}(z)$ for the fibres. Then the restriction $\rho_z: X_z \to \mathcal{S}_z$ is a Lefschetz fibration for all regular values $z$ of $\pi$.

              For $z_0 \in \mathrm{Critval}(\pi)$ and $x_0 \in X_{z_0}$ the corresponding critical point, $\rho_{z_0}: X_{z_0}\setminus\{x_0\} \to \mathcal{S}_{z_0}$ should be a holomorphic map whose critical points are nondegenerate, such that their images under $\rho$ are pairwise distinct, as well as distinct from $\rho(x_0)$.
        \item The critical points of $\rho$ are generic, in the sense that $\DD \rho$ is transverse in $\mathrm{Hom}_{\C}(TX, \rho^*T \mathcal{S})$ to both the zero section and the stratum of complex rank-one linear maps.
        \item Let $z$ be a critical value of $\pi$ and $x \in X_z$ the unique critical point. Then the restriction of $D^2\pi(x)$ to $\ker (\DD \rho (x)) \subset T_xX$ is non-degenerate.
    \end{enumerate}
\end{definition}

We will denote Lefschetz bifibrations by $(\pi, \rho)$. Let us note the examples giving the bifibrations we will use, along with some first consequences of this definition.

\begin{example}\label{ex:affine_bifibration}
    For a smooth complex affine variety $X = \{f=0\} \subset \C^n$, we obtain a Lefschetz bifibration by setting $\pi: X \to \C$ to be the restriction of a linear polynomial function in $\C[x_1, \ldots, x_n]$, $\psi: \C \times \C$ to be projection to the first factor, and $\rho: X \to \C \times \C$ given by $\rho = (\pi, \rho_t)$, where $\rho_t$ is another linear polynomial function.

    Indeed, consider the projectivization $\overline{X} \subset \P^n$  and the ample line bundle $\mathcal{O}(1)$; If $x_0$ denotes the homogenizing variable, let $s_\infty = x_0 \in H^0(\overline{X}, \mathcal{O}(1))$. Note that $s_\infty^{-1}(0) = H_\infty$ is smooth, and that the complement of the base locus $\overline{X} \setminus s_\infty^{-1}(0)$ is $X$. For a generic section linear section $s_0 \in H^0(\overline{X}, \mathcal{O}(1))$ (i.e., the restriction to $\overline{X}$ of a homogeneous polynomial of degree $1$), the ratio $s_0/s_\infty$ defines a Lefschetz pencil, and thus a Lefschetz fibration $\pi: X \to \C$. In practice, one thus chooses a generic linear polynomial $\pi = \sum_{i=1}^n a_i x_i: \C^n \to \C$, and this argument shows that its restriction to $X$ is a Lefschetz fibration.

    The fibres of $\pi$ are affine, so for a generic choice $\rho_t$, its restriction to the fibres is a Lefschetz fibration by the same argument.
\end{example}

\begin{claim}
    $\DD \rho$ is nonvanishing, and $\mathrm{Crit}(\rho) \subset X$ is a complex curve.
\end{claim}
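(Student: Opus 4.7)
The plan is to deduce both assertions from transversality condition (3) in the definition of a Lefschetz bifibration via a standard dimension count in the holomorphic vector bundle $E := \Hom_\C(TX, \rho^* T\mathcal{S})$, of complex rank $2n$, where $n := \dim_\C X$. By hypothesis, the holomorphic section $\DD\rho$ of $E$ is transverse both to the zero section $Z \subset E$ and to the stratum $R_1 \subset E$ of complex rank-one linear maps.

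First I would prove that $\DD\rho$ is nowhere vanishing. The zero section $Z$ has complex codimension $2n$ in $E$, so transversality yields that $(\DD\rho)^{-1}(Z)$ is either empty or a complex submanifold of $X$ of complex dimension $n - 2n = -n$. Since the requirement that $\rho_z$ restrict to a Lefschetz fibration on the fibres of $\pi$ forces $n \geq 2$, the predicted dimension is negative and the preimage must be empty.

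For the second claim, since $\dim_\C \mathcal{S} = 2$ and $\DD\rho$ is nowhere zero, the critical points of $\rho$ are exactly those $x \in X$ at which $\DD\rho(x)$ has rank equal to one; that is, $\mathrm{Crit}(\rho) = (\DD\rho)^{-1}(R_1)$. The key computation is the codimension of $R_1$: identifying each fibre of $E$ with $\Hom_\C(\C^n, \C^2)$, the rank-$\leq 1$ matrices form the image of the surjection $\C^2 \times \C^n \to \Hom_\C(\C^n, \C^2)$ sending $(v, w) \mapsto v w^\top$, whose generic fibres are $1$-dimensional under the $\C^*$-action $(v, w) \mapsto (\lambda v, \lambda^{-1} w)$. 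This yields a subvariety of complex dimension $n + 1$, so $R_1$ has complex codimension $n - 1$ in $E$. Equivalently, $R_1$ is the smooth locus of the affine cone on the Segre embedding $\P^1 \times \P^{n-1} \hookrightarrow \P^{2n-1}$, which makes its smoothness manifest and legitimizes the application of transversality.

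Applying transversality of $\DD\rho$ to $R_1$, I conclude that $\mathrm{Crit}(\rho)$ is a complex submanifold of $X$ of complex codimension $n - 1$, hence a complex curve (in the case $n = 2$ relevant to the paper, the codimension is $1$). I do not anticipate any substantive obstacle: the whole argument is transverse intersection with a determinantal stratum, and both conclusions — nonvanishing and the critical locus being a curve — drop out of the same codimension bookkeeping.
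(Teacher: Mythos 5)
Your argument is correct and follows the same route as the paper: transversality of $\DD\rho$ plus a dimension count in $\Hom_\C(TX, \rho^*T\mathcal{S})$, with your Segre-embedding description of the rank-one stratum giving the same codimension $n-1$ as the paper's determinantal-variety formula $r(m+n-r)$. A small bonus of your phrasing: by emphasizing that $\DD\rho$ is a holomorphic section transverse to a complex submanifold, you obtain the complex structure on $\mathrm{Crit}(\rho)$ directly, whereas the paper's proof of the claim only establishes the real-dimension statement and defers the ``complex curve'' assertion to a computation in the local coordinates of the subsequent lemma.
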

\begin{proof}
    Transversality to the zero section $Z_0$ in $\mathrm{Hom}_{\C}(TX, \rho^*T \mathcal{S})$ implies that $\DD \rho^{-1}(Z_0)$ is a smooth submanifold of $X$ of the same real codimension as $Z_0$. Let $\dim_\C X = n$. The complex codimension of $Z_0$ is the dimension of the fibre, which is $2n$. Hence, $\DD \rho^{-1}(Z_0)$ is a real-codimension-$4n$ submanifold of $X^{2n}$, and hence empty. Hence $x \in  \mathrm{Crit}(\rho)$ iff $\DD \rho(x)$ has complex rank one.

    To find the complex dimension of the stratum $Z_1$ of rank-$1$ linear maps, recall that the dimension of the variety of rank-$r$ $m\times n$ matrices is $r(m+n-r)$, and so $\dim_\C Z_1 = n+1$. Its codimension is hence $2n-n-1 =n-1$, and thus $\mathrm{Crit}(\rho) = \DD \rho^{-1}(Z_1)$ is a smooth submanifold of $X$ of real dimension $2n - 2(n-1) = 2$.
\end{proof}

One can prove the following local form for Lefschetz bifibrations \cite[Lemma 15.9]{seidelbook}:

\begin{lemma}\label{local_bifibration}
    For each critical point $x_0 \in X$ of $\pi$, there exist complex charts around $x_0 \in X$, near $\rho(x_0) \in \mathcal{S}$, and near $\pi(x_0) \in \C$, in which
    \[
        \rho(x_1, \ldots, x_n) = (x_1^2 + \ldots + x_n^2, x_1), \quad \psi(y_1, y_2) = y_1.
    \]
\end{lemma}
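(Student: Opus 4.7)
The plan is to normalize $\psi$ first, then use the Lefschetz hypothesis on $\pi$ to normalize most of $\rho$ by a parametric holomorphic Morse lemma, and finally match up the remaining coordinate by extracting a square root. Since $\psi$ is a submersion by hypothesis, begin by choosing local coordinates $(y_1, y_2)$ on $\mathcal{S}$ near $\rho(x_0)$ in which $\psi(y_1, y_2) = y_1$. In these coordinates write $\rho = (\pi, \rho_t)$ with $\rho_t := y_2 \circ \rho$. Because $\DD\pi(x_0) = 0$ and $\DD\psi$ is surjective, the image of $\DD\rho(x_0)$ lies in the one-dimensional $\ker \DD\psi(\rho(x_0))$; combined with the preceding claim that $\DD\rho$ is nowhere vanishing, this forces $\DD\rho(x_0)$ to have complex rank exactly one, and hence $\DD\rho_t(x_0)\neq 0$. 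Extend $\rho_t$ to a system of holomorphic coordinates $(x_1, \ldots, x_n)$ on $X$ near $x_0$ with $x_1 = \rho_t$.

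In these coordinates $\ker \DD\rho(x_0) = \ker (dx_1)|_{x_0}$, so condition (4) in the definition of Lefschetz bifibration asserts exactly that the Hessian of $\pi$ along the hyperplane $\{dx_1 = 0\}$ is nondegenerate at $x_0$. Applying the implicit function theorem to the $n-1$ equations $\partial_{x_i}\pi = 0$ for $i \geq 2$ produces a holomorphic curve $(\xi_2(x_1), \ldots, \xi_n(x_1))$ of fiberwise critical points of $\pi|_{\{x_1 = c\}}$ passing through $x_0$. After the translation $x_i \mapsto x_i - \xi_i(x_1)$ these critical points lie along the axis $x_2 = \cdots = x_n = 0$, and a standard parametric holomorphic Morse lemma in the transverse directions supplies further holomorphic coordinates in which
\[
    \pi(x_1, \ldots, x_n) = h(x_1) + x_2^2 + \cdots + x_n^2
\]
for some holomorphic germ $h$. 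The vanishing $\partial_{x_1}\pi(x_0) = 0$ then forces $h'(0) = 0$, and nondegeneracy of the full complex Hessian $D^2\pi(x_0)$ (the Lefschetz condition on $\pi$) forces $h''(0) \neq 0$.

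After absorbing the constant $h(0)$, the germ $h$ vanishes to order exactly two at the origin, so there is a holomorphic germ $g$ with $g(0) = 0$, $g'(0) \neq 0$, and $g(u)^2 = h(u)$. Perform the simultaneous coordinate changes $\tilde x_1 := g(x_1)$ on the source and $\tilde y_2 := g(y_2)$ on the target. The latter leaves $\psi(y_1, \tilde y_2) = y_1$ in normal form, and the two components of $\rho$ become $g(x_1)^2 + x_2^2 + \cdots + x_n^2 = \tilde x_1^2 + x_2^2 + \cdots + x_n^2$ and $g(\rho_t) = g(x_1) = \tilde x_1$, which after renaming $\tilde x_1$ back to $x_1$ is the claimed local form.

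The main subtlety is the second step: the nondegenerate bilinear form supplied by condition (4) lives only on the codimension-one subspace $\ker \DD\rho(x_0)$, not on all of $T_{x_0}X$, so one must apply Morse fiberwise in the $(x_2, \ldots, x_n)$-directions with $x_1$ as a parameter rather than simultaneously in all $n$ variables. Once this is arranged, the remainder is bookkeeping to ensure the source and target coordinate changes match so that $\psi$ stays in its normal form throughout.
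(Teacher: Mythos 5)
Your proof is correct, and it follows the standard approach for this local normal form (which is how \cite{seidelbook} treats it as well — the paper simply cites [Lemma 15.9] there rather than reproducing the argument). The three moving parts — normalize $\psi$ using its submersivity, produce the second coordinate $x_1 = \rho_t$ from the rank-one condition, then apply the parametric holomorphic Morse lemma transverse to $\ker\DD\rho(x_0)$ using condition (4) — are all in order, and the final square-root change of coordinates simultaneously on source and target correctly reconciles the $y_2$-component with the $\psi$-normal form. One minor point of bookkeeping you glide over: the translation absorbing $h(0)$ must be applied both to the $y_1$-coordinate on $\mathcal{S}$ and to the coordinate near $\pi(x_0)$ in $\C$ so that $\psi$ remains the first projection; but this is harmless and clearly intended.
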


Using these coordinates, one can prove that $\mathrm{Crit}(\rho)$ is in fact a complex curve, that is, $T \mathrm{Crit}(\rho)$ is invariant under the complex structure. Moreover, \cite[Lemmata 15.7, 15.8]{seidelbook} show that the critical points of $\rho_z$ are precisely $X_z \cap \mathrm{Crit}(\rho)$, and that
\[
    \pi\vert_{\mathrm{Crit}(\rho)}: \mathrm{Crit}(\rho) \to \C
\]
is a branched cover with only doubly ramified points, which occur at the critical points of $\pi$.

\begin{construction}\label{matching_cycle_construction}
    Given a path $\gamma$ in $\C$, we may consider the finite collection of critical values $\mathrm{Critval}(\rho_{\gamma(t)}) \subset \mathcal{S}_{\gamma(t)}$, varying smoothly along $\gamma$ in $t$. Parallel transport on $\mathcal{S}$ in turn gives a symplectomorphism
    \[
        k_\gamma^t: (\mathcal{S}_{\gamma(0)}, \mathrm{Critval}(\rho_{\gamma(0)})) \to (\mathcal{S}_{\gamma(t)}, \mathrm{Critval}(\rho_{\gamma(t)})).
    \]
    The assertion that parallel transport on $\mathcal{S}$ maps $\mathrm{Critval}(\rho_{\gamma(0)})$ into $\mathrm{Critval}(\rho_{\gamma(t)})$ can be shown by proving that $T \mathrm{Crit}(\rho)$ is a horizontal subspace of $T X$ using a computation in the local model \zcref{local_bifibration}. This implies that parallel transport on $X$ maps the critical points of $\rho_{\gamma(0)}$ to the critical points of $\rho_{\gamma(t)}$, and the claim for $k_\gamma^t$ follows.

    % Outline of the computation: In the standard model,

    % \[
    %     \DD \rho (\mathbf{z}) = \begin{matrix}
    %         2z_1 & \cdots & 2z_n       \\
    %         1    & 0      & \cdots & 0 \\
    %     \end{matrix}.
    % \]
    % The critical / rank one locus is
    % \[
    %     \mathrm{Crit}(\rho) = \{(z_1, 0,\ldots, 0) \mid z_1 \in \C \},
    % \]

    % and thus $T \mathrm{Crit}(\rho)$ is spanned by $\del_{z_1}$. We have $\DD \pi (\mathbf{z}) = 2(z_1, \ldots, z_n)$, and for $\mathbf{z} \in \mathrm{Crit}(\rho)$, the kernel is spanned by $\{\del_{z_2}, \ldots, \del_{z_n}\}$. Hence $\imath_{w}\omega_{\mathrm{std}} \vert_{\ker \DD \pi} \equiv 0$.

    We can extract a matching path for $\rho_{\gamma(0)}$ from this branching behaviour. For $t$ close enough to $1$, pick the shortest path joining the critical values that are identified under $k_\gamma^1$. Its preimage under $k_\gamma^t$ in $\mathcal{S}_{\gamma(0)}$ is a matching path $\mu$ for $\rho_{\gamma(0)}$.

    The upshot of this construction is that the matching cycle $S_\mu \subset X_{\gamma(0)}$ associated to this path is the vanishing cycle $C_\gamma$ of the Lefschetz fibration $\pi$.
\end{construction}

Recall that the Lefschetz fibration $\pi$ is determined by the Weinstein type of the fibre $X_{\gamma(0)}$ and the ordered collection of vanishing cycles. As we explain now, \zcref{matching_cycle_construction} combined with \zcref{matching_paths_dehn_twist} allows us to reconstruct this data from the Lefschetz fibration $\rho_{\gamma(0)}: X_{\gamma(0)} \to \C$.

\begin{definition}
    A \textbf{basis of matching paths} for a Lefschetz fibration $\rho: X^{2n} \to \C$ is a collection of matching paths $\{\nu_1, \ldots, \nu_r\}$ such that the collection of matching cycles $\{S_{\nu_i}\}$ induces a basis of $H_n(X; \Z)$.
\end{definition}

Starting from a Lefschetz bifibration $(\pi, \rho)$, choose a distinguished basis of vanishing paths $\Gamma = (\gamma_1, \ldots, \gamma_k)$ with base point $* \in \C$. Next, find a basis of matching paths $N = \{\nu_1, \ldots, \nu_r\}$ for $\rho_*: X_* \to \C$. Applying \zcref{matching_cycle_construction} for each $\gamma_i \in \Gamma$ yields a new set of matching paths for $\rho_*$, which we denote by $(\mu_1, \ldots, \mu_k)$. Next, one attempts to express the $\mu_i$ in terms of plane twists along the elements of $N$ in the form

\begin{equation}\label{matching_paths_expression}
    \mu_i = \tau_{\nu_{i_1}}^{e_1} \circ \cdots \circ \tau_{\nu_{i_l}}^{e_l}(\nu_j), \quad e_j \in \Z.
\end{equation}

\zcref{matching_paths_dehn_twist} then implies that $S_{\mu_i}$, and thus $C_{\gamma_i}$, is hamiltonian isotopic to $\tau_{S_{\nu_{i_1}}}^{e_1} \circ \cdots \circ \tau_{S_{\nu_{i_l}}}^{e_l}(S_{\nu_j})$.

Let us specialize \zcref{matching_cycle_construction} for bifibrations $X \xrightarrow{(\pi, \rho_t)} \C \times \C \xrightarrow{\mathrm{pr}_1} \C$ obtained as in \zcref{ex:affine_bifibration}; that is, $\pi$ and $\rho_t$ are the restrictions to $X$ of sufficiently generic linear polynomials.

\begin{procedure}
    \label{proc:matching_cycles}
    \begin{enumerate}
        \item Compute $\mathrm{Critval}(\pi) \subset \C$. To compute the critical values of the restriction $\pi: X = \{f=0\} \to \C$, find the locus where the gradient $\nabla f$ is parallel to $\dd \pi$, where one considers $\pi$ as a polynomial map $\C^n \to \C$.
        \item Choose a distinguished basis of vanishing paths; whenever we perform this procedure in the rest of this paper, this choice will consist in determining a single origin point so that all our vanishing paths are the linear paths joining it to the critical values. Our ordering is counterclockwise.
        \item For each vanishing path $\gamma$ compute $\mathrm{Critval}(\rho_{\gamma(0)})$. This can be done explicitly over any fibre because we chose $\rho_t$ and $\pi$ to be linear polynomials: to obtain an explicit description of the fibre $X_{z}$, simply solve $\pi(x_1, \ldots, x_n) = z$ for any variable, say, $x_1$, and substitute into a defining polynomial $f$ of $X$. This will yield a defining polynomial for the fibre (in one variable less), so that critical values can be computed just as in Step 1.
        \item For each $t \in [0,1]$, compute $\mathrm{Critval}(\rho_{\gamma(t)})$. The result is an isotopy of $\mathrm{pr}_1(\mathrm{Critval}(\rho_{\gamma(0)})) \subset \C$ eventually joining two points in $\mathrm{Critval}(\rho_{\gamma(1)})$.
        \item For $t$ close to $1$, pick the linear path joining the two critical values that converge under $k_\gamma^1$, and observe its image when reversing the isotopy to $X_{\gamma(0)}$. This is the matching path $\mu$.
    \end{enumerate}
\end{procedure}

As these computations involve finding zeros of (many) generic polynomials, it is usually not feasible to perform them by hand and one requires a computer to make use of this methodology. An implementation of this procedure is available in \cite{hbds_repo}.

% \item Determine the vanishing cycles of $\rho_{\gamma(0)}$. This can be done easily when $\dim_\R X = 4$: choose a distinguished basis of vanishing paths for $\rho_{X_{\gamma(0)}}$ (subject to the same convention; i.e., for us this necessitates only the choice of a base point); fix a vanishing path $\gamma_\rho$, and determine the fibres over each $\gamma_\rho(t)$. In this dimension, the fibre will be discrete, and two points will converge as $t\to1$; These two points in $X_{\gamma(0)}$ are the vanishing cycle $C_{\gamma_\rho}$ $\rho_{\gamma(0)}$.
\section{Fronts of Legendrian lifts}
\label{sec:fronts}
In this section, we review the two main ingredients from \cite{cm_lefs} to draw fronts of Legendrian lifts of the vanishing cycles of a Lefschetz fibration. Recall that the basis of matching paths to be chosen in step 4 of \zcref{recipe_3_3} should be such that we know how to draw the fronts of the lifts of their matching cycles; the first ingredient exhibits fronts for certain Lagrangian cycles in sphere plumbings, see \zcref{subsec:plumbings}. The Lefschetz fibrations we choose later on will have as their general fibre such a plumbing, and we will be able to realize these skeleta as unions of matching  cycles.

The second ingredient is related to Step 5, where we express the vanishing cycles in the form of \zcref{matching_paths_expression}; to draw a front of $S_{\mu_i}$ when we know how to draw fronts of the $S_{\nu_i}$, we review the effect of Dehn twists $\tau_{S_{\nu_i}}$ on the Lagrangians $S_{\nu_j}$ (\zcref{subsec:dehn_twists_lifts}).

\subsection{Plumbings and standard skeleta}\label{subsec:plumbings}

When the fibre is a plumbing, one can exhibit a standard set of Lagrangian spheres whose lifts can be described explicitly. Intuitively, given two closed manifolds $Q_0$ and $Q_1$, their plumbing is defined as the disjoint union of their disc cotangent bundles, identifying a cotangent fibre of $\D T^*Q_0$ with a Lagrangian disk in the zero section of $\D T^*Q_1$. For a more rigorous construction of plumbings, we refer to \cite[Definition 2.1]{plumbings}. For the rest of this article, we consider only $Q_0 = Q_1 = S^1$.

\begin{definition}[Tree Plumbings]
    Given a tree graph $T$ with vertex set $T^{(0)}$ and edges $T^{(1)}$, the $T$-plumbing of spheres is the disjoint union of $k = \abs{T^{(0)}}$ copies of $\D T^*S^1$, labeled by the vertices of $T$, modulo the equivalence relation plumbing two copies of $\D T^*S^1$ whenever the corresponding vertices are connected by an edge. The resulting Liouville domain is denoted by $F_T$.
\end{definition}

Notice that plumbing a $\D T^*S^1$ is the same as attaching a $1$-handle. Hence, $\del(F_T \times \D^2) = \mathrm{ob}(F_T; \mathrm{id})$ in a front projection consists of $\abs{T^{(0)}}$ pairs of 3-balls representing the $1$-handles.

\begin{definition}
    Given a $T$-plumbing $F_T$, the \textbf{Lagrangian $T$-skeleton} of $F_T$ is defined to be the collection of the zero sections of each $\D T^*S^1$ in the plumbing.
\end{definition}

The upshot is that by \cite[Proposition 2.13]{cm_lefs}, any collection of $\abs{T^{(0)}}$ Legendrian spheres in $\mathrm{ob}(F_T; \mathrm{id})$ which meet in accordance with the tree $T$ and are in cancelling position with the $1$-handles are Legendrian isotopic to a Legendrian lift of the Lagrangian $T$-skeleton. See \zcref{fig:leg_skeleton} for an example of our choice of lift for the rest of this article.

\begin{figure}[htbp]
    \centering
    \def\svgwidth{\textwidth}
    \import{inkscape_images}{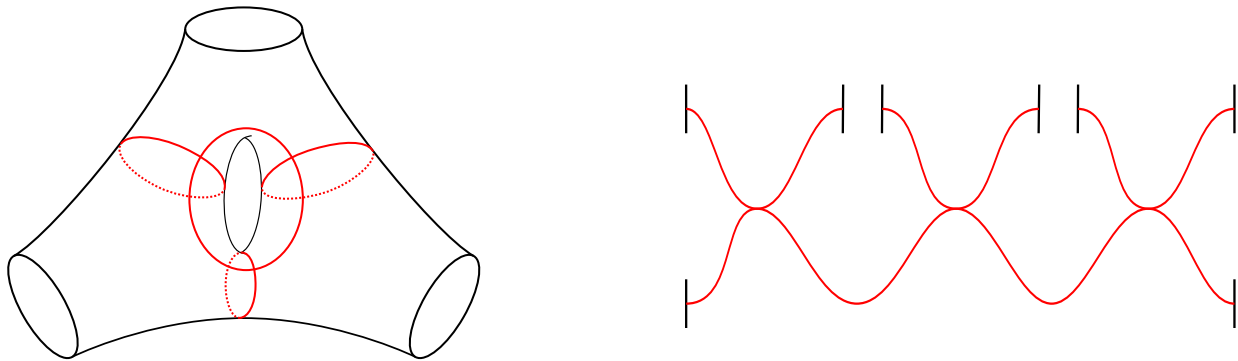}
    \caption{Left: $F_T$ for $T=D_4$ with the Lagrangian $D_4$-skeleton in red. Right: The standard Legendrian lift to $\mathrm{ob}(F_T; \mathrm{id})$.}
    \label{fig:leg_skeleton}
\end{figure}

\subsection{Finding a basis of matching paths for plumbings}
\label{subsec:basis_of_matching_paths}

In practice, and indeed, in all cases occurring in our computations, plumbings are often presented as affine varieties in $\C^2$. Given a smooth affine variety $F \subset \C^2$, its diffeomorphism type is determined by its genus and the number of boundary punctures. Let $F = \{f=0\}$ for some $f \in \C[x, y]$. The genus is given by the genus-degree formula ($2g = (d-1)(d-2)$ if $d$ is the degree of $f$), or equivalently by counting the number of interior lattice points of the Newton polytope of $f$; for the number of boundary punctures, consider the projectivization $\overline{F} \subset \P^2$ given by $\{\overline{f} = 0\}$, where $\overline{f} \in \P[x,y,w]$ is the homogenization of $f$. The number of boundary punctures is the number of points in $\overline{F} \cap \{ w= 0\}$, that is, the intersection of $\overline{F}$ with the hyperplane at infinity. Given the diffeomorphism type of $F$, one next determines a tree $T$ such that the sphere plumbing $F_T$ is diffeomorphic to $F$.

\subsubsection{Vanishing cycles of a branched cover}

Suppose we are given a Lefschetz fibration $\rho: F_T \to \C$ (which is in this dimension simply a doubly ramified branched cover), presented as the restriction to $F_T$ of a polynomial map in $\C[x,y]$. We now explain how to exhibit in this setting a basis of matching paths realizing the standard Lagrangian $T$-skeleton.

To construct a single matching path, one requires knowledge of the vanishing cycles of $\rho$. Choose a distinguished basis of vanishing paths for $\rho$, with base point $*$. Assume the cover has degree $d$, so that the generic fibre $\rho^{-1}(*)$ consists of $d$ distinct points. There are $\binom{d}{2}$ $0$-spheres in a generic fibre, and parallel transport along a vanishing path collapses one of these to a single point. In other words, the vanishing cycles of $\rho$ specify which two sheets in the fibre over $*$ come together at a branch point.

Notice that the coordinates in $\C^2$ of the points in the fibre over any point can be computed explicitly using numerical approximation; if we also denote the polynomial map restricting to $\rho$ on $F_T$ by $\rho$, and if $f$ is a defining polynomial for $F_T$, then

\[
    \rho^{-1}(z) = \{ (x,y) \in \C^2 \mid f(x,y) = 0, \, \rho(x,y)-z = 0 \}.
\]

One can visualize the fibres by projecting to a generic plane (in our applications, we project to the first coordinate), so that the generic fibre is represented by $d$ points in $\C$. The effect of parallel transport along a vanishing path $\gamma$ may then be observed by performing this computation for $z$ along the image of $\gamma$, inducing a deformation of $\mathrm{Critval}(\rho)$ such that two critical values merge at $t=1$. These two points form the vanishing cycle $C_\gamma^\rho$ of $\rho$. In this dimension, this computation can be carried out by a computer, and so may be performed conveniently for any vanishing path.

\subsubsection{Matching paths from branching behaviour}
We present some tools to determine a matching path using this information. If $C_\gamma = C_\delta$, then $\gamma \#\overline{\delta}$ is a matching path. One way to obtain more matching paths is given by determining the vanishing cycles for a large collection of vanishing paths and concatenate those with identical vanishing cycles.

A more systematic approach is indicated in \zcref{fig:hurwitz_moves_general}: Start with a distinguished basis of vanishing paths and determine their vanishing cycles. Draw a path connecting two critical values and isotope it into a collection of a vanishing path, a series of loops around critical values, followed by another vanishing path, all based at the reference fibre from the distinguished basis of vanishing paths. To determine whether this path is a matching path, one must verify that parallel transport along the path sends the vanishing cycle at one end to that same vanishing cycle at the other end, which can be checked using knowledge of the effect of Hurwitz moves and that the monodromy along a counterclockwise looping of vanishing path is a Dehn twist along the corresponding vanishing cycle.

\begin{figure}[htbp]
    \centering
    \def\svgwidth{0.8\textwidth}
    \import{inkscape_images}{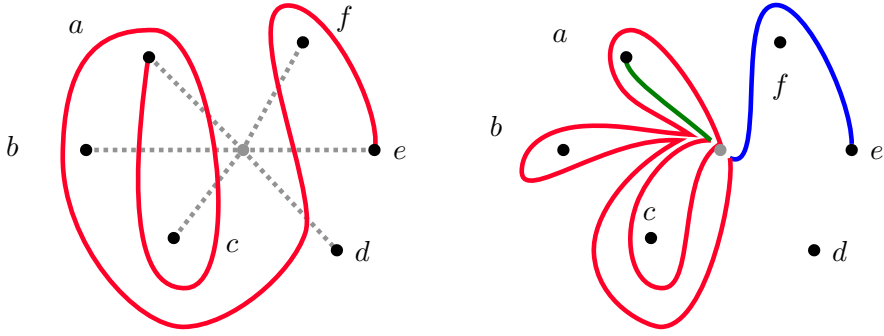}
    \caption{Left: A distinguished basis of vanishing paths with their respective vanishing cycles (labeled by letters) and a path connecting two critical values. Right: The isotopy allows one to determine that parallel transport along the red segment sends $a$ to $C := \tau_c\tau_b\tau_a\tau_c^{-1}(a)$ in the reference fibre. If $C$ coincides with the vanishing cycle of the blue segment (which is $\tau_f^{-1}(e)$, as can be determined by a single Hurwitz move on the grey basis of vanishing paths), then the path is a matching path.}
    \label{fig:hurwitz_moves_general}
\end{figure}

\begin{remark}
    A Dehn twist $\tau_{S_0}$ for a $0$-sphere $S_0$ is its own inverse, and if $S_1$ is another $0$-sphere sharing a single point with $S_0$, then $\tau_{S_0}(S_1)$ is the $0$-sphere consisting of the two points in $S_0 \cup S_1 \setminus (S_0 \cap S_1)$.
\end{remark}

The heuristic we use to realize the Lagrangian $T$-skeleton of $F_T$ via matching paths is via the intersection pattern $T$ of the $T$-skeleton. The matching cycle of any matching path is an exact Lagrangian $S^1 \subset F_T$; if one ensures that the intersection pattern of these cycles is $T$, then the collection of matching cycles has to be exactly the Lagrangian $T$-skeleton of $F_T$. From only this information, it is not always possible to precisely determine which path realizes which cycle, but this precision is not needed to apply \zcref{recipe_3_3}.

Observe that for two matching paths intersecting at the endpoints, their matching cycles intersect transversely in a single point, whereas the behaviour of the matching cycles is not a priori clear if the intersection takes place in the interior. Over the $d$ points in the fibre over the intersection point, lying each in a separate sheet of the covering, one has to determine which two sheets come together at the branch points associated to the respective matching paths. One can determine those sheets by isotoping the intersection point to the base point $*$ similarly to the procedure illustrated in \zcref{fig:hurwitz_moves_general}, interpreting the resulting segments of the matching paths as vanishing paths, and determining their vanishing cycles. One checks if they share a single point.

In short, one finds a basis of matching paths for $\rho: F_T \to \C$ by constructing matching paths exhibiting a $T$-intersection pattern, with as many of the intersections as possible being at the endpoints. For any interior intersection, one has to check by hand that the vanishing cycles over the intersection point intersect in a single point (i.e., if the distinguished basis of vanishing paths is isotoped to have its base point at the intersection point, one computes the vanishing cycles of the new basis, and reads off their intersection).

\subsubsection{Dehn twists and Legendrian lifts}\label{subsec:dehn_twists_lifts}
Given two exact Lagrangians $S, L \subset (F, \lambda)$, with $S$ diffeomorphic to a sphere, we can consider the Dehn twists $\tau_{S}^{\pm}(L)$. We review in this section how to describe the Legendrian lift $\Lambda_{\tau_S^{\pm}(L)}$ of $\tau_{S}^{\pm}(L)$, given Legendrian lifts of $S$ and $L$.

\begin{proposition}{\cite[Proposition 2.24]{cm_lefs}}\label{dehn_fronts}
    Consider two exact Lagrangians $S, L$ in the page $F$ of a contact open book $\mathrm{ob}(F, \lambda; \phi)$, such that $S$ is diffeomorphic to a sphere. Assume that the potential functions $\lambda\vert_S$ and $\lambda\vert_L$ are $C^0$-bounded by some small $\eps >0$. Consider the contact manifold $\widehat{\mathrm{ob}(F, \lambda; \phi)}$ obtained by performing $(+1)$-surgery along $\Lambda_S^{\epsilon}$ and $(-1)$-surgery along $\Lambda_S^{5\epsilon}$, where $\Lambda_K^{\epsilon}$ is the Legendrian lift of the exact Lagrangian $K$ given by
    \[
        \Lambda_K^\epsilon = \{[(x, g(x))] \subset \mathrm{ob}(F; \mathrm{\phi}) \mid x \in K \}.
    \]
    Here, $g = \lambda\vert_K: K \to \R$ is a potential. Then
    \begin{itemize}
        \item $\widehat{\mathrm{ob}(F, \lambda; \phi)}$ is canonically contactomorphic to $\mathrm{ob}(F, \lambda; \phi)$;
        \item The Legendrian $\Lambda_L^{3\epsilon}$ is Legendrian isotopic to $\Lambda_{\tau_S(L)}^0 \subset \mathrm{ob}(F, \lambda; \phi)$.
    \end{itemize}
    Analogously, performing $(-1)$-surgery along $\Lambda_S^{\epsilon}$ and $(+1)$-surgery along $\Lambda_S^{5\epsilon}$ yields a contact identification $\overline{\mathrm{ob}(F, \lambda; \phi)} \cong \mathrm{ob}(F, \lambda; \phi)$ under which $\Lambda_L^{3\epsilon} \subset \overline{\mathrm{ob}(F, \lambda; \phi)}$ is Legendrian isotopic to $\Lambda_{\tau_S^{-1}(L)}^0 \subset \mathrm{ob}(F, \lambda; \phi)$.
\end{proposition}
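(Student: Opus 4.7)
The plan is to combine the standard cancellation of inverse contact surgeries with the surgery-versus-open-book dictionary, which together realize the Dehn twist $\tau_S$ as the trace of the cancellation on the middle page.

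First, for the contactomorphism in the first bullet: the Legendrians $\Lambda_S^\epsilon$ and $\Lambda_S^{5\epsilon}$ are Legendrian isotopic, being lifts of $S$ at different $S^1$-heights. Performing $(+1)$-surgery on one Legendrian pushoff and $(-1)$-surgery on another is well-known to cancel, yielding a canonical contactomorphism back to the original contact manifold \cite{ding_geiges_surgery}.

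Next, this cancellation contactomorphism is emphatically \emph{not} the identity: on the portion between the two surgery sites, it implements the Dehn twist $\tau_S$. To see this I would use the standard open-book surgery dictionary: a $(-1)$-contact surgery on a Legendrian on page $\theta$ is equivalent to inserting a right-handed Dehn twist along its projection at angle $\theta$ in the open book monodromy, while a $(+1)$-surgery inserts a left-handed Dehn twist. Hence $\widehat{\mathrm{ob}(F,\lambda;\phi)}$ may be realized as an open book with the same pages $F$ and total monodromy $\phi$ but with two additional insertions: $\tau_S^{-1}$ at angle $\epsilon$ and $\tau_S$ at angle $5\epsilon$. Since $\tau_S^{-1} \cdot \tau_S = \mathrm{id}$, the total monodromy is unchanged, which is the global reason for the cancellation. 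To produce an explicit contactomorphism one must then untwist the two insertions: at any page $\theta \in (\epsilon, 5\epsilon)$, the natural identification with the original open book applies $\tau_S$ to move the Legendrian back into the original page. Hence $\Lambda_L^{3\epsilon}$ corresponds to the lift of $\tau_S(L)$ at angle $3\epsilon$ in the original book, which is Legendrian isotopic to $\Lambda_{\tau_S(L)}^0$ by an ambient $S^1$-shift. The analogous $(-1)/(+1)$ case is formally identical with the signs of the insertions reversed, yielding $\Lambda_{\tau_S^{-1}(L)}^0$.

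The main obstacle is making the open-book surgery dictionary and the resulting untwisting contactomorphism rigorous at the level of an explicit contact isotopy, rather than just on the level of abstract monodromy. A clean way to do this is via the abstract Weinstein Lefschetz fibration picture (\zcref{abstract_lefschetz} applied to $\Sigma = F$ with a single vanishing cycle $S$): the trace cobordism of attaching a Weinstein handle along $\Lambda_S^{5\epsilon}$ and detaching one along $\Lambda_S^\epsilon$ is, by Seidel's Picard--Lefschetz construction, the mapping cylinder of $\tau_S$. This cobordism \emph{is} the cancellation contactomorphism, and by construction it acts as $\tau_S$ on any thin middle slice. The $C^0$-smallness hypothesis on the potentials ensures that $\Lambda_L^{3\epsilon}$ sits cleanly inside such a slice, so the conclusion follows.
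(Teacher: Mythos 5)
The present paper does not prove this statement; it is quoted verbatim from \cite[Proposition 2.24]{cm_lefs} and used as a black box, so there is no in-paper proof to compare against. Your plan is a faithful reconstruction of the standard argument (and, as far as I can tell, the one in the cited source): the $(+1)/(-1)$ surgery pair on Reeb pushoffs of $\Lambda_S$ cancels by Ding--Geiges, and via the surgery-to-open-book dictionary the cancellation contactomorphism applies $\tau_S$ to the pages between the two surgery angles, carrying $\Lambda_L^{3\eps}$ to a lift of $\tau_S(L)$, after which a short Reeb flow moves it to $\Lambda_{\tau_S(L)}^0$. The sign bookkeeping is right: a $\tau_S^{-1}$ insertion at angle $\eps$ and a $\tau_S$ insertion at angle $5\eps$ force the untwisting map $(x,\theta) \mapsto (\tau_S(x),\theta)$ on the middle pages and the identity outside, which one checks is compatible with both gluings.

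Two points deserve more care than the sketch gives them, though you do flag the overall rigor issue. First, the open-book surgery dictionary is normally stated for Legendrians that sit honestly on a page, whereas $\Lambda_S^\eps$ is the lift over the graph of a (small but nonzero) potential $\lambda\vert_S$; one must either Weinstein-homotope $(F,\lambda)$ so that $\lambda\vert_S = 0$, or Legendrian-isotope $\Lambda_S^\eps$ and $\Lambda_S^{5\eps}$ into pages and note that the surgered contact manifold is unchanged under such isotopies. Second, the untwisting map cannot be applied abruptly at the insertion angles: it has to be interpolated over thin collars, and one must verify it preserves the contact form, which uses exactness of $\tau_S$. Your proposal to realize the cancellation as the trace cobordism of a single vanishing cycle in an abstract Weinstein Lefschetz fibration is a reasonable way to package both of these; I have no objection to the structure of the argument.
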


Combining this result with \zcref{handle_slides} now allows us to deduce how to draw $\Lambda_{\tau_S^{\pm}}(L)$ in the front projection of $\mathrm{ob}(F, \lambda; \mathrm{id}) \cong \#^k(S^1 \times S^2) \cong \del_\infty(F \times \C)$.

First of all, notice that if $S$ and $L$ are disjoint, then $\tau_S(L) = L$, and the lift does not change. So let us assume that $S \pitchfork L$; Recall that Lagrangian intersections give rise to Reeb chords between their Legendrian lifts $\Sigma$ and $\Lambda$, respectively, and restrict attention to a neighbourhood of a particular intersection point, and thus to a particular Reeb chord. Choose the Reeb chord to have length $0$, i.e., $\Sigma$ and $\Lambda$ intersect (non-generically) such that their tangent planes span the contact distribution at that point.

\zcref{fig:dehn_fronts} explains what happens in the front projection of a neighbourhood of this intersection point: first draw the three Reeb pushoffs as indicated in \zcref{dehn_fronts}, and label the pushoffs of $\Sigma$ by the framing for their contact surgery; the Reeb vector field in the front projection is $\del_z$, so the Reeb pushoffs are given by vertical shifts. This procedure yields three new arcs.

\begin{figure}[ht]
    \centering
    \def\svgwidth{0.8\textwidth}
    \import{inkscape_images}{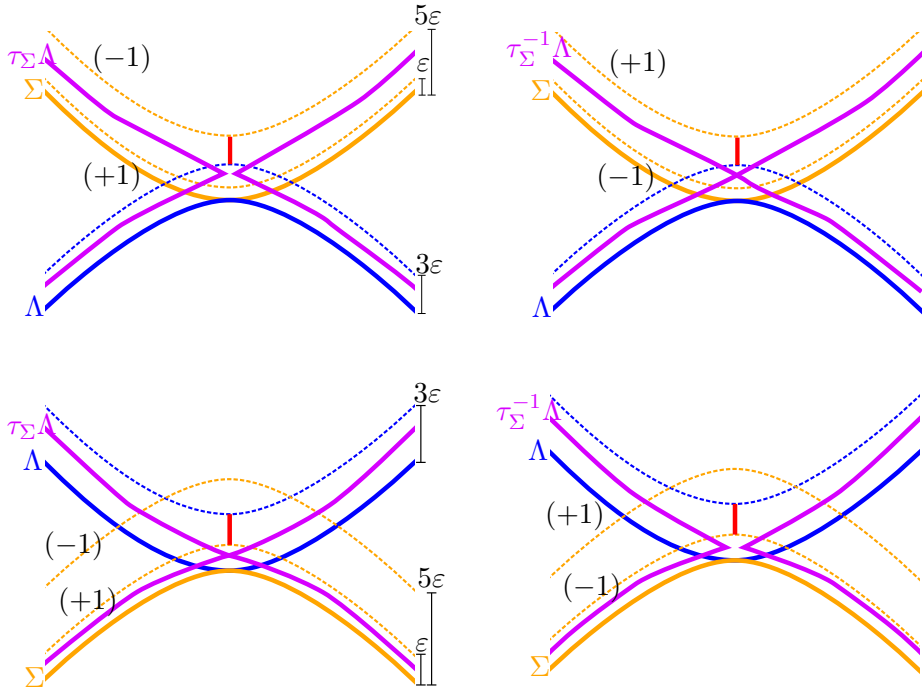}
    \caption{The four possible configurations of a Legendrian sphere $\Sigma$ and a Legendrian $\Lambda$; The diagrams take place in a neighbourhood of the intersection point in $\Sigma \cap \Lambda$ within the front projection of $\mathrm{ob}(F, \lambda; \mathrm{id})$.}
    \label{fig:dehn_fronts}
\end{figure}

Notice that the arc corresponding to $R_{3\eps}(\Lambda)$ intersects precisely one of the other two arcs (in the front projection), and lies entirely below or above the other, which one depending on their relative height in the front projection. For instance, if $\Sigma$ lies above $\Lambda$ in a neighbourhood of the intersection point, then $R_{3\eps}(\Lambda)$ lies entirely below $R_{5\eps}(\Sigma)$, along which we are supposed to perform a $(-1)$-surgery. The pushoffs $R_{3\eps}(\Lambda)$ and $R_{5\eps}(\Sigma)$ are still Legendrian lifts of the same transversely intersecting Lagrangians, joined by a Reeb chord $c$.

By \zcref{handle_slides}, we can now perform a handle slide along $c$, and obtain the Legendrian $h_{R_{5\eps}(\Sigma)}(R_{3\eps}(\Sigma))$ in the upper left of \zcref{fig:dehn_fronts}. We obtain a Legendrian with cuspidal singularities since we are handle-sliding over a $(-1)$-surgery Legendrian. By the contactomorphism from the first bullet of \zcref{dehn_fronts} stemming from the cancellation lemma \cite[Proposition 6.4.5]{geiges_contact_intro}, we can now delete the auxilary arcs, and what is left describes the front of $\Lambda_{\tau_S(L)}$.

The remaining cases are entirely analogous; if $\Lambda$ lies instead below $\Sigma$, we instead perform a handle slide over a $(+1)$-surgery Legendrian, and end up with a cone singularity; for the inverse Dehn twists, simply invert the order of the surgeries.

This explains how to draw the Legendrian lift of $\tau_S(L)$ in the front projection of $\mathrm{ob}(F, \lambda; \mathrm{id})$ given the Legendrian lifts of $S$ and $L$.

We are now ready to draw the entire attaching link of $\mathrm{Lf}(F_T; C_1, \ldots, C_k)$, given a factorization of each vanishing cycle $C_i$ into Dehn twists along cycles of the Lagrangian $T$-skeleton of $F_T$, as in \zcref{matching_paths_expression}.

Start by drawing the standard lift from \zcref{fig:leg_skeleton}. Match up the components of the lift with their Lagrangian projections via the $T$-intersection pattern.

If the $l$-th vanishing cycle is expressed by a sequence of Dehn twists on $\nu$, trace a out a small positive Reeb pushoff by $l \cdot \eps$ of $\Lambda_\nu$ in the front, and perform the local modifications from \zcref{fig:dehn_fronts} for each of the Dehn twists in the factorization.

The increasing amount of pushoff reflects the order in which the critical handles are attached.

We are now able to review all the steps of the algorithm \cite[Recipe 3.3]{cm_lefs}, starting from a complex affine variety $X \subset \C^3$. The aim is to take a generic Lefschetz fibration $\pi: X \to \C$ and determine the collection of vanishing cycles in a general fibre $F$. The total space $X$ is then obtained by attaching Weinstein handles to the Legendrian lifts of these cycles to $F \times \D^2$ as in (\zcref{abstract_lefschetz}); The output of the algorithm is a front projection of this Legendrian lift in $\del_\infty(F \times \D^2) = \mathrm{ob}(F; \mathrm{id})$.

\begin{construction}\label{recipe_3_3}
    \begin{enumerate}
        \item Choose two linear polynomials $(\pi, \rho): \C^3 \to \C^2$. Generically, their restriction to $X$ will form a Lefschetz bifibration.
        \item Determine the general fibre $F$ of $\pi$ and express it as a plumbing $F_T$ for a tree $T$.
        \item Perform \zcref{proc:matching_cycles}, which amounts to choosing a basis of vanishing paths for $\pi$ meeting in a base point $*$ and results in a collection of matching cycles for $\rho\vert_{F_*}$.
        \item Determine a basis of matching paths for $\rho\vert_{F_T}$ as in \zcref{subsec:basis_of_matching_paths}.
        \item Express the matching paths from Step 3 in terms of plane twists along the basis paths.
        \item Use \zcref{dehn_fronts} to draw a front of the Legendrian lifts $\Lambda_i$ of the vanishing cycles to $\mathrm{ob}(F; \mathrm{id})$. Since the Reeb vector field in the front projection is $\del_z$, the lifts $\Lambda_i$ are drawn with a vertical pushoff reflecting the ordering of the collection of vanishing cycles. Take note that if a vanishing cycle $C_0$ precedes $C_1$ with respect to the chosen ordering, and if the $C_i$ intersect, then there is a Reeb chord starting at the lift $\Lambda_0$ to $\Lambda_1$. This can lead to clasps in the front projection.
        \item Simplify the front using Reidemeister moves.
    \end{enumerate}

\end{construction}

% We recall the notion from \cite[(15e)]{seidelbook}.
% \begin{definition}
% A Lefschetz bifibration is a commutative diagram of the type
% \[
%     \begin{tikzcd}
%         X \arrow[rr, bend left, "\pi"] \arrow{r}{\rho} & \mathcal{S} \arrow{r}{\psi}& \C,
%     \end{tikzcd}
% \]
% where $\pi: X \to \C$ is a Lefschetz fibration, $\mathcal{S}$ is an exact symplectic $4$-manifold with corners, and $\rho$ and $\psi$ are normal fibrations

% \begin{enumerate}[]
%     \item $\psi$ has no critical points.
%     \item For $z \in \C$, write $X_z = \pi^{-1}(z)$, and $\mathcal{S}_z = \psi^{-1}(z)$ for the fibres. Then the restriction $\rho_z: X_z \to \mathcal{S}_z$ is a Lefschetz fibration for all regular values $z$ of $\pi$.
%     \item For $z_0 \in \mathrm{Critval}(\pi)$ and $x_0 \in X_{z_0}$ the corresponding critical point, the critical points of $\rho_{z_0}: X_{z_0}\setminus\{x_0\} \to \mathcal{S}_{z_0}$ are nondegenerate, and their images under $\rho$ pairwise distinct, as well as distinct from $\rho(x_0)$.
%     \item The critical points of $\rho$ are generic, in the sense that $\DD \rho$ is transverse in $\mathrm{Hom}_{\C}(TX, \rho^*T \mathcal{S})$ to both the zero section and the stratum of complex rank-one linear maps.
%     \item If $x$ is a critical point of $\rho$ but not of $\pi$, then the restriction of $I_X$ to $X_{\pi(x)}$ is integrable locally near $x$.
%     \item Let $z$ be a critical value of $\pi$ and $x \in X_z$ the unique critical point. Then the restriction of $D^2\pi(x)$ to $\ker (\DD \rho (x)) \subset T_xX$ is non-degenerate, and
% \end{enumerate}

% \end{definition}

\section{Handlebodies to the Painlevé Equations}
\label{sec:handle-bodies-Painleve}

In this section, we present the affine varieties corresponding to a generic choice of parameters in the Painlevé equations in terms of their symplectic handle data. Specifically, we give a Weinstein handlebody diagram for each space $M_X$, $X \in \mathfrak{P}$. We collect the affine equations from \cite{saito_vput} here:
\begin{align*}
    \mathbf{M}_{I} &= \{xyz + x + y +1 = 0\}\\
    \mathbf{M}_{II} &= \{xyz - x - \alpha y - z + \alpha  + 1 = 0 \mid \alpha \in \C^* \}\\
    \mathbf{M}_{II(FN)} &= \{xzy + x - y + z + s = 0 \mid s \in \C \}\\
    \mathbf{M}_{III(D6)} &= \{xyz + x^2 + y^2 +(1+\alpha\beta)x + (\alpha + \beta)y + \alpha\beta = 0 \mid \alpha, \beta \in \C^* \}\\
    \mathbf{M}_{III(D7)} &=  \{xyz + x^2 + y^2 + \alpha x + y = 0 \mid \alpha \in \C^* \}\\
    \mathbf{M}_{III(D8)} &= \{xyz + x^2 - y^2 - y = 0\}\\
    \mathbf{M}_{IV} &=  \{xyz + x^2 - (s_2^2 + s_1s_2)x - s_2^2 y -s_2^2 z+ s_2^2 + s_1s_2^3 = 0 \mid s_1 \in \C, \, s_2 \in \C^* \}\\
    \mathbf{M}_{V} &=\{xyz + x^2 + y^2 - (s_1 + s_2s_3)x - (s_2 + s_1s_3) y - s_3z + s_3^2 + s_1s_2s_3 + 1 = 0 \mid s_1, s_2 \in \C, \, s_3 \in \C^* \}\\
    \mathbf{M}_{V(deg)} &= \{ xyz + x^2 + y^2 + s_0x + s_1 y + 1 = 0 \mid s_0, s_1 \in \C \}\\
    \mathbf{M}_{VI} &=  \{xyz + x^2 + y^2 + z^2 -s_1x -s_2y-s_3z + s_4 = 0 \mid s_i = a_i a_4 + a_ja_k \\
    &\phantom{asldjf}\text{for } (i,j,k) \text{ a cyclic permutation of } (1,2,3), \, s_4 = a_1a_2a_3a_4 + a_1^2+a_2^2 + a_3^2 + a_4^2 - 4,\\
    &\phantom{asldjf}a_i \in \C \}.
\end{align*}
We note that the expression for $\mathbf{M}_{III(D8)}$ differs from that in \cite{saito_vput}. Our expression is the correction provided by \cite[Section 4.7]{szabo2021perversity}.

Note that the parameters appearing in the Painlevé equations give rise to a family of cubics. We prove that the corresponding Weinstein manifolds are Weinstein homotopic for a generic choice of parameters by expressing them as fibres of Lefschetz fibrations in the sense of \cite{ailsa_tori}; Note that this is a variation of the notion from \zcref{sec:symplectic-fibrations}.

\begin{definition}\label{ailsa_lefschetz}
    A \textbf{Lefschetz fibration} $w: (X, \dd \Theta) \to \C^n$ comprises
    \begin{itemize}
        \item A manifold-with-corners $X$ equipped with an exact symplectic form $\Omega = \dd \Theta$, such that the vector field dual to $\Theta$ is outward-pointing on each stratum of $\del X$;
        \item A proper  map $w: X \to \C^n$ which is $(J,i)$-pseudoholomorphic with respect to some compatible almost complex structure $J$, such that the restriction $w\vert_{\del X}$ is a submersion (i.e., there are no critical points on the boundary). Moreover, $w$ has finitely many critical points, at most one in each fibre.
    \end{itemize}
    These data satisfy that
    \begin{itemize}
        \item near each critical point of $w$, $J$ is integrable, and the complex Hessian of $w$ is non-degenerate;
        \item the vector field dual to the restriction of $\Theta$ to any to every fibre of $w$ is outward-pointing along the boundary.
    \end{itemize}
    Denote the base $w(X) \subset \C^n$ by $B$. Observe that $B$ is a compact subset.
\end{definition}

Note that this definition does not impose any triviality conditions near the horizontal boundary, so that parallel transport may escape through the boundary in finite time.

One can remedy this by completing the fibres $\Sigma_z$ over all $z \in B$ to Liouville manifolds $\hat{\Sigma}_z$. This yields a fibration $\hat{\Sigma} \hookrightarrow \tilde{X} \xrightarrow{w} B$, for which holds

\begin{lemma}[{\cite[Lemma 2.2]{ailsa_tori}}]\label{partrans}
    Let $\gamma$ be a path in $B$ avoiding the critical values. Then there is a family of symplectomorphisms $\rho_\gamma^t: \hat{\Sigma}_{\gamma(0)} \to \hat{\Sigma}_{\gamma(t)}$, such that
    \begin{enumerate}
        \item $\rho_{\gamma}^t$ restricted to $\Sigma_{\gamma(0)}$ coincides with the parallel transport maps $\Sigma_{\gamma(0)} \to \Sigma_{\gamma(t)}$ (insofar as it does not escape the boundary), up to compactly supported Hamiltonian isotopy;
        \item $\rho_\gamma^t$ is an exact symplectomorphism, i.e., $(\rho_\gamma^t)^*\theta_{\gamma(t)} = \theta_{\gamma(0)} + \dd f_t$, for some compactly supported $f_t \in C^\infty_c(\hat{\Sigma}_{\gamma(0)})$.
    \end{enumerate}
\end{lemma}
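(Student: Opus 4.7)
The plan is to construct the extended parallel transport by modifying the symplectic connection on the completion so that it becomes ``product-like'' along the cylindrical ends glued to each fibre, while showing that the modification on each fibre is Hamiltonian. First, I would make the completion explicit: since the vector field dual to $\Theta|_{\Sigma_z}$ is outward-pointing on $\del \Sigma_z$, its flow gives a collar $\del \Sigma_z \times (0,1]$ on which $\Theta|_{\Sigma_z} = e^r \alpha_z$, where $\alpha_z$ is the contact form on $\del \Sigma_z$. Because $w|_{\del_h X}$ is a submersion, these collars fit together globally into a neighbourhood of $\del_h X$ of the form $\del_h X \times (0,1]$, and we may set
\[
\tilde X := X \cup_{\del_h X} \bigl(\del_h X \times [1, \infty)\bigr), \qquad \tilde \Theta := \Theta \text{ on } X, \quad \tilde\Theta = e^r \alpha \text{ on the end},
\]
with $\alpha = \Theta|_{\del_h X}$. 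Fibrewise this exhibits $\hat \Sigma_z$ as the Liouville completion of $\Sigma_z$.

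Next I would control parallel transport in the cylindrical region. Writing $w$ on the end as the composition of the projection $\del_h X \times [1, \infty) \to \del_h X$ with $w|_{\del_h X}$, one has a candidate ``product'' connection whose horizontal lifts are purely tangent to $\del_h X$ (and in particular have vanishing $\partial_r$-component). The horizontal distribution determined by $\dd \tilde \Theta$ differs from this product connection by a vertical vector field; using a cutoff in $r$, I would interpolate between the two, obtaining an exact symplectic form $\tilde \Omega'= \dd \tilde \Theta'$ which agrees with $\dd \tilde \Theta$ for $r \leq N$ and whose horizontal distribution is the product one for $r \geq N+1$. A fibrewise Moser argument, applied to the family of Liouville forms $e^r \alpha$ on the cylinder (which share the same contact form at infinity), shows that the modification is a compactly supported Hamiltonian isotopy on each fibre, hence on $\tilde X$. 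Parallel transport with respect to $\tilde \Omega'$ along $\gamma$ is then globally defined: on the product region the horizontal vector field is a bounded lift of $\dot\gamma$, so trajectories cannot escape in finite time. Define $\rho_\gamma^t$ to be this parallel transport.

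Property (i) then follows because away from the collar, the modified connection agrees with the original; for a point whose original trajectory remains in $\Sigma$ throughout $[0,t]$, the two transports differ only by the ambient Hamiltonian isotopy relating $\tilde\Omega$ to $\tilde\Omega'$, which is compactly supported. Property (ii) follows from the general fact that symplectic parallel transport with respect to an exact form preserves the primitive up to a fibrewise exact term: differentiating $(\rho_\gamma^t)^* \tilde\Theta|_{\hat\Sigma_{\gamma(t)}}$ in $t$ produces the fibrewise derivative of $\tilde\Theta(\xi_\gamma)$, where $\xi_\gamma$ is the horizontal lift, giving $\dd f_t$ with $f_t$ compactly supported since the horizontal component of $\tilde\Theta'$ vanishes at infinity. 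The main obstacle is the interpolation in the second paragraph: one must choose the cutoff carefully so that $\tilde\Omega'$ remains nondegenerate, remains a connection making $w$ a symplectic fibration, and so that the difference $\tilde\Theta' - \tilde\Theta$ restricts to an exact form on each fibre with uniformly controlled primitive along $\gamma$ (this is what makes the resulting fibrewise identification genuinely a Hamiltonian isotopy rather than just a symplectic one). Once this is set up, all verifications reduce to chasing definitions in the local product model on the cylindrical end.
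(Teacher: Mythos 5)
The paper does not prove this lemma; it is imported verbatim as \cite[Lemma 2.2]{ailsa_tori}, so there is no in-paper argument to compare against. Your outline follows the natural strategy for establishing extended parallel transport on Liouville Lefschetz fibrations---complete the fibres, deform the symplectic connection to a product connection on the cylindrical ends, and absorb the discrepancy into a compactly supported Hamiltonian isotopy---and this is in line with how such statements are proved in the literature.

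A few points in your sketch should be tightened. The ``fibrewise Moser argument'' you invoke is more precisely a Gray stability argument: what one needs is to trivialize the family $\{\alpha_{\gamma(t)}\}$ of contact forms on $\del\Sigma_{\gamma(t)}$ along the path, producing compactly supported contactomorphisms and hence a coherent product trivialization of the cylindrical ends; the compactly supported Hamiltonian identification of completed fibres then follows from this. Second, the interpolation from $\tilde\Theta$ to a product primitive must be arranged so that each intermediate form still defines a symplectic connection (nondegeneracy transverse to the fibres, unchanged fibre symplectic form); you flag this as the ``main obstacle'' but leave it unresolved. It can be closed by choosing $\tilde\Theta' - \tilde\Theta$ to be a one-form supported in the cylindrical region which vanishes on vertical vectors, so that the fibre restrictions are untouched and only the horizontal splitting moves. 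Third, for property (ii) the primitive $f_t = \tilde\Theta'(\xi_\gamma)\big|_{\hat\Sigma}$ is compactly supported precisely because in the trivialized ends the product horizontal lift is annihilated by $e^r\alpha$; this hinges on the Gray trivialization above, and the dependence should be made explicit rather than asserted in passing.
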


\begin{remark}
    Notice that the completion procedure for this type of Lefschetz fibration is only horizontal, in the sense that the fibres are the completions of the Liouville domain fibres before attaching cylindrical ends; the base after completing is still the (compact) set $B$.
\end{remark}

\begin{proposition}\label{painleve_independence}
    Let $f_{\alpha}: \C^n \to \C$ be a family of polynomial maps parameterized by $\alpha \in \mathcal{P}$, where $\mathcal{P} \subset \C^k$ is a smooth variety. We consider the spaces $X_\alpha = \{f_{\alpha} = 0\} \subset \C^n$. 

    If $\alpha(t)$ describes a path in $\mathcal{P}$ such that all $X_{\alpha(t)}$ are smooth, then they are Weinstein deformation equivalent.
\end{proposition}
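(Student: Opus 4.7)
The strategy is to realize the path $t \mapsto X_{\alpha(t)}$ as the family of fibres of a Lefschetz fibration in the sense of \zcref{ailsa_lefschetz}, and then to apply \zcref{partrans} to conclude. Since smoothness of every $X_{\alpha(t)}$ along the path is exactly the assumption that the total space has no critical values of the projection, ``Lefschetz fibration'' here really means ``smooth exact symplectic fibration'', and parallel transport will directly provide an exact symplectomorphism that serves as a Weinstein homotopy.

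More concretely, I would first choose a small open neighbourhood $B \subset \mathcal{P}$ of the path $\alpha([0,1])$ whose closure is compact, and form the total space
\[
    \mathcal{X} := \{ (x, \alpha) \in \C^n \times B \mid f_\alpha(x) = 0 \},
\]
with its projection $w: \mathcal{X} \to B$. Smoothness of the $X_{\alpha(t)}$ implies that $\mathcal{X}$ is smooth in a neighbourhood of $w^{-1}(\alpha([0,1]))$, and that $w$ has no critical points there; after shrinking $B$, we may assume this holds over all of $B$. To cut out a compact manifold-with-corners, restrict the standard plurisubharmonic function $\phi_0(x) = \norm{x}^2$ on $\C^n$ to $\mathcal{X}$ and set $\mathcal{X}^R := \mathcal{X} \cap \{\phi_0 \leq R\}$ for $R$ sufficiently large and generic. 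Since $\phi_0\vert_{X_\alpha}$ is a plurisubharmonic exhausting function for $X_\alpha$ (see \zcref{ex:weinstein_on_variety}), its dual Liouville vector field is outward-pointing on the horizontal boundary $\{\phi_0 = R\}$ of every fibre; and by restricting the standard Kähler form on $\C^n \times B$ to $\mathcal{X}^R$, one obtains an exact symplectic form whose Liouville vector field is outward-pointing on each boundary stratum for $R$ large.

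These data assemble $(\mathcal{X}^R, w)$ into a Lefschetz fibration in the sense of \zcref{ailsa_lefschetz} with no critical points at all. Applying \zcref{partrans} to the path $\gamma := \alpha\vert_{[0,t]}$ then yields a smooth family of exact symplectomorphisms
\[
    \rho^t : \hat{X}_{\alpha(0)} \longrightarrow \hat{X}_{\alpha(t)}, \qquad t \in [0,1],
\]
between the completed Weinstein manifolds. Pulling back the Weinstein data $(\omega_{\phi_0}, \nabla_{g_{\phi_0}}\phi_0, \phi_0)$ on $\hat{X}_{\alpha(t)}$ through $\rho^t$ produces a smooth one-parameter family of Weinstein structures on $\hat{X}_{\alpha(0)}$ starting at the original structure, which is precisely a Weinstein homotopy realizing the deformation equivalence $X_{\alpha(0)} \simeq X_{\alpha(1)}$ under the diffeomorphism $\rho^1$.

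The main technical point that needs care is verifying that $\mathcal{X}^R$ genuinely satisfies the Liouville-type outward-pointing conditions on all boundary strata, and that one can choose $R$ uniformly in $t$; this requires an estimate on how the Kähler potential $\phi_0$ interacts with the varying fibres $X_{\alpha(t)}$, but since $\alpha([0,1])$ is compact and the family of polynomials $f_{\alpha(t)}$ is smooth, a uniform transversality/compactness argument suffices. Everything else reduces to a direct application of \zcref{partrans} and the observation that an exact symplectomorphism of completed Weinstein manifolds coming from parallel transport in a smooth family of Weinstein fibres is by construction a Weinstein homotopy equivalence.
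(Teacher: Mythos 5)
Your proof takes the same overall route as the paper's: realize the $X_{\alpha(t)}$ as fibres of a critical-point-free Lefschetz fibration in the sense of \zcref{ailsa_lefschetz} over a base containing the path in its interior, truncate to a compact domain-with-corners, invoke \zcref{partrans}, and pull back the Lyapunov data through the parallel transport maps to produce the Weinstein homotopy. (The paper sets this up via the graph map $\mathcal{F}:\mathcal{P}\times\C^n \to \mathcal{P}\times\C$ restricted to a two-real-parameter thickening of $\gamma(t)=(\alpha(t),0)$; you use the incidence variety $\mathcal{X}\subset \C^n\times B$ over a small open $B\subset\mathcal{P}$ --- essentially the same construction.)

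The one genuine technical divergence is the truncation. You cut every fibre with the single ambient exhaustion $\phi_0 = \norm{\cdot}^2$ at one level $R$, and you correctly flag that one must then show $R$ can be chosen uniformly in $t$, i.e.\ that the critical values of $\phi_0\vert_{X_{\alpha(t)}}$ stay uniformly bounded along the compact path. The paper avoids having to prove any uniform estimate by instead choosing a $t$-dependent family of Lyapunov functions $\phi_t$ and a $t$-dependent cutoff $\delta(t)>x_t$ just above the largest critical value of $\phi_t$, so each truncation is calibrated to its own fibre. Your fixed-$R$ variant is more concrete but your ``uniform transversality/compactness argument suffices'' is exactly the step the paper was structured to sidestep; with that gap filled (or simply by switching to a $t$-dependent cutoff as the paper does), the two proofs are interchangeable.
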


\begin{proof}
    Consider the map
    \begin{align*}
        \mathcal{F}: \mathcal{P} \times \C^n & \to \mathcal{P} \times \C                 \\
        (\alpha, \mathbf{z})                 & \mapsto (\alpha, f_{\alpha}(\mathbf{z})).
    \end{align*}
    Our aim is to exhibit the $X_\alpha$ as completions of regular fibres of $\mathcal{F}$, suitably restricted to a compact domain. It will follow from \zcref{partrans} that the fibres are Liouville homotopic.

    Observe that 
    \[
        \mathrm{Crit}(\mathcal{F}) = \{ (\alpha, z) \in \mathcal{P}\times \C^n \mid \DD f_\alpha(z) = 0 \},
    \]
    and correspondingly,
    \[
        \mathrm{Critval}(\mathcal{F}) = \{(\alpha, y) \in \mathcal{P} \times \C \mid f_\alpha^{-1}(y) \text{ is not smooth} \}.
    \]
    Thus the path $\gamma(t) = (\alpha(t), 0)$ is contained in the regular values of $\mathcal{F}$, since, by assumption, $X_{\alpha(t)} := \{f_{\alpha(t)} = 0 \}$ is a smooth variety for all $t \in [0,1]$. Since the $X_{\alpha(t)}$ are Weinstein manifolds, we obtain a family $\phi_t: X_{\alpha(t)} \to \R$ of Lyapunov functions. For each $t$, let $x_t \in \R$ be the largest critical value of $\phi_t$; we proceed to choose a family $\delta(t)$ of numbers such that $\delta(t) > x_t$ and define

    \[
        \overline{X_{\alpha(t)}} = \{\mathbf{z} \in X_{\alpha(t)} \mid \phi_t(\mathbf{z}) \leq \delta(t) \}.
    \]
    Then $\overline{X_{\alpha(t)}}$ is a Weinstein domain whose completion is $X_{\alpha(t)}$.

    Over each point $\gamma(t)$, set
    \begin{align*}
        E_t & := \left(\mathcal{F}^{-1}(\gamma(t)) \right) \cap \left( \mathcal{P} \times \{\phi_t \leq \delta(t)\} \right)\\
            & = \{\alpha_t\}\times \overline{X_{\alpha(t)}}.
    \end{align*}

    We next extend $\gamma$ to a $2$-parameter family $\eta: (-\varepsilon, \varepsilon) \to \mathcal{P} \times \C$ by varying the choice of path $\gamma$ (remaining in the regular value set of $\mathcal{F}$), and define the truncated fibres over each $\eta(s,t)$ in the same way. Call the result $E$.

    $\mathcal{F}\vert_E: E \to \mathcal{P} \times \C$ is a Lefschetz fibration in the sense of \zcref{ailsa_lefschetz} without critical points. Hence, by \zcref{partrans}, parallel transport gives a Liouville homotopy between the fibres $X_{\alpha(t)}$. Keeping track of the Lyapunov function $\phi_{\alpha(t)}\vert_{\overline{X}_{\alpha(t)}}$ and noting that the completions of the $\overline{X_{\alpha(t)}}$ are $X_{\alpha(t)}$ makes this into a Weinstein deformation equivalence.
\end{proof}

To deduce independence of the Painlevé cubics of their parameters, it remains to show that, for each Painlevé equation, the parameter spaces are smooth algebraic varieties, and that the critical value set of the parameter projection has connected complement. This follows by inspection; the parameter spaces $\mathcal{P}$ for all the Painlevé monodromy spaces are easily seen to be products $\C^l \times (\C^*)^k$, for some nonnegative integers $k, l$. A computation shows that the set of critical values of $\mathcal{F}$ in all cases consists of a proper algebraic subset of the base, and thus has real codimension at least $2$. Thus its complement is path-connected, and we obtain that for any fixed $X \in \mathfrak{P}$, the spaces $M_X$ corresponding to a choice of parameters making the resulting affine cubic smooth are Weinstein deformation equivalent.

In the following subsections, we list the expression for the affine family $\mathbf{M}_X$ for each $X \in \mathfrak{P}$ (except for $X=I$, which was treated in \cite[Section 4.1]{cm_lefs}), along with our choice of parameter values, Lefschetz bifibration $(\pi, \rho)$, and regular values of $\pi$ and $\rho$ serving as the base point from which vanishing paths emanate, and compute the resulting handlebody. We use the following

\begin{notation}
    \begin{itemize}
        \item For diagrams in $\C$ of matching paths, red symbols correspond to critical values of $\rho$. It turns out that in all cases, $\rho$ is a branched cover of degree $3$; when interpreted as a Lefschetz fibration, its vanishing cycles are $0$-dimensional spheres in the chosen fibre. Critical values with the same symbol have the same vanishing cycle.
        \item When performing simplifications of the computed handlebody, we let \textbf{C} denote handle cancellation, \textbf{Ri} a Reidemeister move, \textbf{G} a Gompf move, and \textbf{S} a handle slide.
    \end{itemize}
\end{notation}

\addtocontents{toc}{\SkipTocEntry}

\subsection{The PII moduli space}
\label{subsec:P2-calculations}

% In this section we begin with an analysis of the second Painlev\'e moduli space which arises in the following context. Let $G = \textsf{SL}_{2}(\C)$, and let $\Sigma = \mathbb{P}^{1}$ with a single marked point at $\{\infty\} \in \mathbb{P}^{1}$. We study connections with irregular type $Q = A_{3}z^{3} + A_{2}z^{2} + A_{1}z$ having a pole of order $3$ at $\{\infty\}$ with $A_{3}$ regular. The resulting connections have a pole of order $4$ at $\{\infty\}$, and $P_{II}$ is the resulting moduli space.

% In Section \ref{subsec:P2-handlebody-calculations} we endow $P_{II}$ with a Lefschetz bifibration and derive a Weinstein handlebody decomposition. In Section \ref{subsubsec:PII-skeleton-MS} we include a discussion of the Stokes' Legendrian, and use it to prove Theorem \ref{thm:PII-mirror}.

% \subsubsection{Handlebody computations}
% \label{subsec:P2-handlebody-calculations}

% Saito and van der Put \cite{saito_vput} have characterized the $P_{II}$ moduli space as a member of the following family of algebraic surfaces parametrized by $s \in \C$:
% \begin{equation}
%     \label{eq:PII-moduli-space}
%     \{(x,y,z) \mid xyz + x - y + z + s = 0\} \subset \mathbb{C}^{3}
% \end{equation}

The PII moduli space is characterized by
\[
    \mathbf{M}_{II} = \{xyz - x - \alpha y - z + \alpha  + 1 = 0, \qquad \alpha \in \C^* \}
\]

Choosing $\alpha=-2$ yields the affine variety $M_{II} = \{xyz -x + 2y -z - 1 = 0\}$. We endow it with the Lefschetz bifibration $(\pi, \rho) = (0.3x + 0.1y+z, \, 3 x + 3/4 z)$. The base point for the distinguished bases of vanishing paths is $0$ for both $\pi$ and $\rho$. \zcref{fig:p2-matching} shows the outcome of \zcref{matching_cycle_construction}.

\begin{figure}[htbp]
    \centering
    \def\svgwidth{\textwidth}
    \import{inkscape_images}{basis_p2.pdf_tex}

    \caption{Left: the collection of matching paths of $\rho$ induced by $\pi$. Right: A basis of matching paths for $\rho$.}
    \label{fig:p2-matching}
\end{figure}

One can factor the paths on the left in terms of the paths on the right of \zcref{fig:p2-matching} as below (up to V-moves), yielding expressions for the vanishing cycles $C_i$:
\begin{align*}
    C_{0} & = \alpha,                                           \\
    C_{1} & = \tau_\gamma^{-1}\alpha,                           \\
    C_{2} & = \tau_\beta\alpha,                                 \\
    C_{3} & = \tau_\gamma^{-1}\tau_\beta\alpha,                 \\
    C_{4} & = \tau_\delta^{-1}\tau_\gamma^{-1}\tau_\beta\alpha, \\
    C_{5} & = \tau_\delta\alpha.                                \\
\end{align*}

Next, we perform the Hurwitz move $\mathcal{H}_4^{-1}$. This has the effect of replacing $C_3$ with $\tau_{C_3}^{-1}(C_4)$, which one draws directly to see that it yields $\delta$. The resulting expressions for the vanishing cycles are
\begin{align*}
    C_{0} & = \alpha,                           \\
    C_{1} & = \tau_\gamma^{-1}\alpha,           \\
    C_{2} & = \tau_\beta\alpha,                 \\
    C_{3} & = \delta,                           \\
    C_{4} & = \tau_\gamma^{-1}\tau_\beta\alpha, \\
    C_{5} & = \tau_\delta\alpha.                \\
\end{align*}
The front from this data and subsequent simplifications leading to the diagram in \zcref{fig:main_results} is shown in \zcref{fig:p2_simplify}.

\begin{figure}[htbp]
    \centering
    \def\svgwidth{\textwidth}
    \import{inkscape_images}{P2.pdf_tex}
    \caption{Legendrian front and simplifications for PII.}
    \label{fig:p2_simplify}
\end{figure}

\addtocontents{toc}{\SkipTocEntry}

\subsection{The PII(FN) moduli space}
\label{sec:p2fn}

The PII(FN) moduli space is characterized by
\[
   \mathbf{M}_{II(FN)} = \{ xyz + x - y + z + s = 0, \quad s \in \C^*\}.
\]
For the choice of $s = 1$, this yields the affine equation $M_{II(FN)} = \{xyz + x-y-z+1 = 0 \}$. We endow it with the Lefschetz bifibration $(\pi, \rho) = (-3x - y -2z, 3x + 1/2 z)$. The base point for the distinguished bases of vanishing paths is $0$ for both $\pi$ and $\rho$. \zcref{fig:p2fn-matching} shows the outcome of \zcref{matching_cycle_construction}.

\begin{figure}[htbp]
    \centering
    \def\svgwidth{\textwidth}
    \import{inkscape_images}{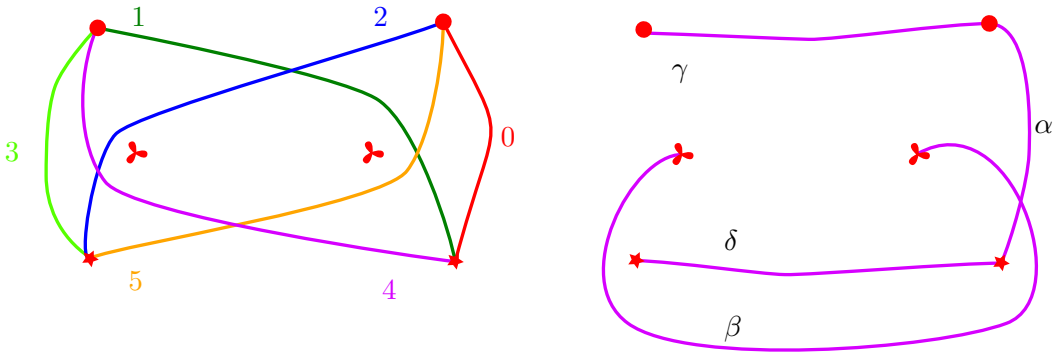}

    \caption{Left: the collection of matching paths of $\rho$ induced by $\pi$. Right: A basis of matching paths for $\rho$.}
    \label{fig:p2fn-matching}
\end{figure}

One can factor the paths on the left in terms of the paths on the right of \zcref{fig:p2fn-matching} as below (up to V-moves), yielding expressions for the vanishing cycles $C_i$:
\begin{align*}
    C_{0} & = \delta,                       \\
    C_{1} & = \tau_\delta\tau_\alpha \beta, \\
    C_{2} & = \tau_\delta\alpha,            \\
    C_{3} & = \tau_\gamma^{-1}\alpha,       \\
    C_{4} & = \tau_\delta\tau_\alpha \beta, \\
    C_{5} & = \gamma.
\end{align*}
We perform the Hurwitz moves $\mathcal{H}_1^{-1}$ and $\mathcal{H}_2^{-1}$ (the subscript is relative to the basis after performing the preceding Hurwitz move). The resulting expressions for the vanishing cycles are
\begin{align*}
    C_{0} & = \tau_\alpha \beta,            \\
    C_{1} & = \alpha,                       \\
    C_{2} & = \delta,                       \\
    C_{3} & = \tau_\gamma^{-1}\alpha,       \\
    C_{4} & = \tau_\delta\tau_\alpha \beta, \\
    C_{5} & = \gamma.
\end{align*}

The front from this data and subsequent simplifications leading to the diagram in \zcref{fig:main_results} is shown in \zcref{fig:p2fn_simplify}.

\begin{figure}[htbp]
    \centering
    \def\svgwidth{\textwidth}
    \import{inkscape_images}{p2fn.pdf_tex}
    \caption{Legendrian front and simplifications for PII(FN).}
    \label{fig:p2fn_simplify}
\end{figure}

\addtocontents{toc}{\SkipTocEntry}

\subsection{The PIII(D6) and PV(deg) moduli spaces}
\label{sec:t220}

The PIII(D6) and PV(deg) moduli spaces are characterized by
\begin{align*}
    \mathbf{M}_{III(D6)} = \{ xyz + x^2 + y^2 + (1+\alpha \beta)x + (\alpha + \beta)y + \alpha \beta & = 0, \quad \alpha, \beta \in \C^*\}, \\
    \mathbf{M}_{V(deg)} = \{ xyz + x^2 + y^2 + s_0x + s_1 y + 1                                     & = 0, \quad s_0, s_1 \in \C\}.
\end{align*}

For the choice of $\alpha = -1$ and $\beta = 1$, and $s_0=s_1 =0$, respectively, this yields the space $T_{2,2,0}$, as defined in \cite{ailsa_cusp}. We endow the space with the Lefschetz bifibration $(\pi, \rho) = (-0.7x +0.3iy+z,\, x -  0.05iy)$. The base point for the distinguished bases of vanishing paths is $0$ for both $\pi$ and $\rho$. \zcref{fig:p3d6-matching} shows the outcome of \zcref{matching_cycle_construction}.

Note that Capovilla-Searle describes the handlebody for this space in \cite{orsola_thesis}. We expect our handlebodies to agree after suitable simplifications.

\begin{figure}[htbp]
    \centering
    \def\svgwidth{\textwidth}
    \import{inkscape_images}{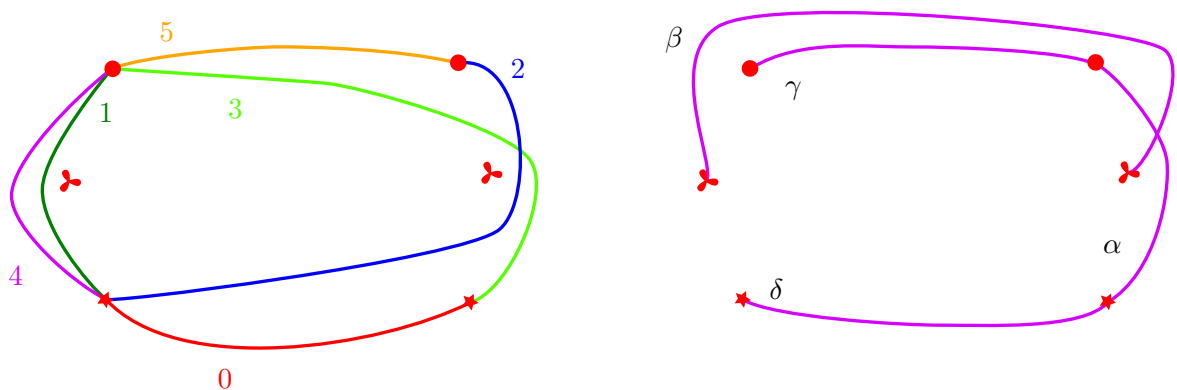}

    \caption{Left: the collection of matching paths of $\rho$ induced by $\pi$. Right: A basis of matching paths for $\rho$.}
    \label{fig:p3d6-matching}
\end{figure}

One can factor the paths on the left in terms of the paths on the right of \zcref{fig:p3d6-matching} as below (up to V-moves), yielding the following expressions for the vanishing cycles.

\begin{align*}
    C_{0} & = \alpha,                           \\
    C_{1} & = \gamma,                           \\
    C_{2} & = \delta,                           \\
    C_{3} & = \alpha,                           \\
    C_{4} & = \tau_\delta\alpha,                \\
    C_{5} & = \tau_\gamma^{-1}\tau_\beta \alpha \\
    C_6   & = \tau_{\gamma}^{-1}\alpha.
\end{align*}

\begin{figure}[htbp]
    \centering
    \def\svgwidth{\textwidth}
    \import{inkscape_images}{P3D6.pdf_tex}
    \caption{Handlebody for PIII(D6) and PV(deg).}
    \label{fig:p3d6}
\end{figure}

\addtocontents{toc}{\SkipTocEntry}

\subsection{The PIII(D7) moduli space}
\label{sec:p3d7}

The PIII(D7) moduli space is characterized by
\[
    \mathbf{M}_{III(D7)} = \{ xyz + x^2 + y^2 + \alpha x + y = 0, \quad \alpha \in \C^*\}.
\]
For the choice of $\alpha = 1$, this yields the affine equation $M_{III(D7)} = \{xyz + x^2 + y^2 + x + y = 0 \}$. We endow it with the Lefschetz bifibration $(\pi, \rho) = (-3x + 2y + \frac{1}{2}z, 2x + 1/4 z)$. The base point for the distinguished bases of vanishing paths is $0$ for both $\pi$ and $\rho$. \zcref{fig:p3d7-matching} shows the outcome of \zcref{matching_cycle_construction}.

\begin{figure}[htbp]
    \centering
    \def\svgwidth{\textwidth}
    \import{inkscape_images}{basis_p3d7.pdf_tex}

    \caption{Left: the collection of matching paths of $\rho$ induced by $\pi$. Right: A basis of matching paths for $\rho$.}
    \label{fig:p3d7-matching}
\end{figure}

One can factor the paths on the left in terms of the paths on the right of \zcref{fig:p3d7-matching} as below (up to V-moves), yielding expressions for the vanishing cycles $C_i$:
\[
    \begin{aligned}
        C_{0} & = \delta,                                     \\
        C_{1} & = \tau_{\gamma}^{-1}\tau_{\beta}\alpha,       \\
        C_{2} & = \tau_{\delta}^{-1}\tau_{\gamma}^{-1}\alpha, \\
        C_{3} & = \gamma,                                     \\
        C_{4} & = \alpha,                                     \\
        C_{5} & = \tau_{\delta}^{-1}\tau_{\beta}^{-1}\alpha.
    \end{aligned}
    \xrightarrow{\mathcal{H}_3 + \mathcal{H}_2 + C + \mathcal{H}_1}\quad
    \begin{aligned}
        C_{0} & = \delta,                   \\
        C_{1} & = \tau_{\beta}^{-1}\alpha,  \\
        C_{2} & = \gamma,                   \\
        C_{3} & = \tau_\beta \alpha,        \\
        C_{4} & = \tau_{\delta}^{-1}\alpha, \\
        C_{5} & = \alpha.
    \end{aligned}
\]

We perform the Hurwitz moves $\mathcal{H}_3$ and $\mathcal{H}_2$ (the subscript is relative to the basis after performing the preceding Hurwitz move); Perform the cyclic permutation sending $C_5$ to the top; and lastly perform the Hurwitz move $\mathcal{H}_1$.

The front from this data and subsequent simplifications leading to the diagram in \zcref{fig:main_results} is shown in \zcref{fig:p3d7_simplify}.

\begin{figure}[htbp]
    \centering
    \def\svgwidth{1.1\textwidth}
    \import{inkscape_images}{p3d7.pdf_tex}
    \caption{Legendrian front and simplifications for PIII(D7). S denotes a handle slide, and the arrow without a label is an ambient isotopy.}
    \label{fig:p3d7_simplify}
\end{figure}

\addtocontents{toc}{\SkipTocEntry}

\subsection{The PIII(D8) moduli space}
\label{sec:p3d8}

Note that \cite{saito_vput} contains a description of the PIII(D8) moduli space which was incorrect, and the right expression was given by Szabó in \cite[Section 4.7]{szabo2021perversity}. The PIII(D8) moduli space is accordingly characterized by
\begin{equation}\label{eq:p3d8}
	\mathbf{M}_{III(D8)} = \{ xyz + x^{2} + y^{2} -y = 0\}.
\end{equation}
We endow the resulting variety with the Lefschetz bifibration $(\pi, \rho) = (-ix + 3y -z, -1.5 y +  z)$. The base point for the distinguished bases of vanishing paths is $0$ for $\pi$ and $1$ for $\rho$. \zcref{fig:p3d8-matching} shows the outcome of \zcref{matching_cycle_construction}.

\begin{figure}[htbp]
	\centering
	\def\svgwidth{\textwidth}
	\import{inkscape_images}{basis_p3d8.pdf_tex}

	\caption{Left: the collection of matching paths of $\rho$ induced by $\pi$. Right: A basis of matching paths for $\rho$.}
	\label{fig:p3d8-matching}
\end{figure}

One can factor the paths on the left in terms of the paths on the right of \zcref{fig:p3d8-matching} as below (up to V-moves), yielding expressions for the vanishing cycles $C_i$:

\[
	\begin{aligned}
		C_{0} & = \tau_\alpha^{-1} \tau_\gamma^2 \tau_\delta^{-1} \alpha, \\
		C_{1} & = \alpha,                                                 \\
		C_{2} & = \tau_\delta^{-1} \alpha,                                \\
		C_{3} & = \beta,                                                  \\
		C_{4} & = \tau_\gamma^2 \alpha,                                   \\
	\end{aligned}
	\xrightarrow{\mathcal{H}_1} \quad
	\begin{aligned}
		C_{0} & = \alpha,                                \\
		C_{1} & = \tau_\gamma^{2}\tau_\delta^{-1}\alpha, \\
		C_{2} & = \tau_\delta^{-1}\alpha,                \\
		C_{3} & = \beta,                                 \\
		C_{4} & = \tau_\gamma^2 \alpha,                  \\
	\end{aligned}
\]

% This is with the old equation.
% \[
%     \begin{aligned}
%         C_{0} & = \tau_\gamma^{-1}\tau_\delta^{-1}\tau_\beta\alpha, \\
%         C_{1} & = \tau_\gamma^{-1}\tau_\delta^{-1}\alpha,           \\
%         C_{2} & = \delta,                                           \\
%         C_{3} & = \alpha,                                           \\
%         C_{4} & = \tau_\gamma^{-1}\tau_\delta^{-1}\alpha,           \\
%         C_{5} & = \gamma,                                           \\
%         C_6   & = \alpha.
%     \end{aligned}
%     \xrightarrow{\mathcal{H}_2+\mathcal{H}_5+\mathcal{H}_1}\quad
%     \begin{aligned}
%         C_{0} & = \delta,                           \\
%         C_{1} & = \tau_\gamma^{-1}\tau_\beta\alpha, \\
%         C_{2} & = \tau_\gamma^{-1}\alpha,           \\
%         C_{3} & = \alpha,                           \\
%         C_{4} & = \gamma,                           \\
%         C_{5} & = \tau_\delta^{-1}\alpha,           \\
%         C_6   & = \alpha.
%     \end{aligned}
% \]
We perform the single Hurwitz move $\mathcal{H}_1$. The front from this data and subsequent simplifications leading to the diagram in \zcref{fig:main_results} is shown in \zcref{fig:p3d8_simplify}.

\begin{figure}[htbp]
	\centering
	\def\svgwidth{\textwidth}
	\import{inkscape_images}{p3d8_correct.pdf_tex}
	\caption{Legendrian front and simplifications for PIII(D8).}
	\label{fig:p3d8_simplify}
\end{figure}

\subsubsection{Computing $H_1$}
The Legendrian front obtained in \zcref{fig:p3d8_simplify} contains an uncancellable $1$-handle; Writing $M := M_{III(D8)}$, a cellular homology computation readily yields that $H_1(M;\Z) \cong \Z/2\Z$. This is unexpected when compared to the other Painlevé spaces, and may raise the question whether there has been a mistake along the rather delicate computation process. However, we recover this homology group using results from \cite{szabo2021perversity} and \cite{szabo_nemethi_geom_pw}. All (co--)homology groups in the following are taken with $\Z$-coefficients.

Consider the compactification $\overline{M}$ obtained by homogenizing the defining  \zcref{eq:p3d8}. Note that $\overline{M}$ contains a singularity in the complement of $M$; Let $X \to \overline{M}$ denote its minimal resolution. We will use the sequence of the pair $(X,M)$.

\begin{lemma}\label{lem:lefschetz_duality}
	We have $H_k(X,M) = H^{4-k}(D)$, where $D:= X - M$ is the compactification divisor.
\end{lemma}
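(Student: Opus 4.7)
The plan is to recognize this as an instance of Poincaré--Lefschetz duality applied to a regular neighborhood of the compactification divisor $D$. Since $X$ is a smooth compact complex surface, it is a closed oriented $4$-manifold (with the orientation induced by the complex structure), and $D \subset X$ is a compact algebraic subvariety. In particular $D$ is triangulable and admits a compact regular neighborhood $N \subset X$ such that $N$ deformation retracts onto $D$ and $N \setminus D$ deformation retracts onto $\partial N$. In the case at hand, $D$ is explicit: it consists of the strict transform of the divisor at infinity of $\overline{M}$ together with the exceptional divisor of the minimal resolution $X \to \overline{M}$, and a tubular neighborhood can even be constructed concretely from the combinatorics of this normal crossings configuration.

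Given such an $N$, the first step is to excise $X \setminus N$ (whose closure lies in $M$) to obtain the chain of isomorphisms
\[
H_k(X, M) \;=\; H_k(X, X \setminus D) \;\cong\; H_k(N, N \setminus D) \;\cong\; H_k(N, \partial N),
\]
where the last step uses the deformation retraction $N \setminus D \simeq \partial N$. The second step is to apply Poincaré--Lefschetz duality to the compact oriented $4$-manifold with boundary $N$, yielding
\[
H_k(N, \partial N) \;\cong\; H^{4-k}(N).
\]
Finally, because $N$ deformation retracts onto $D$, we have $H^{4-k}(N) \cong H^{4-k}(D)$. Composing these three isomorphisms gives the claim.

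The only potential subtlety is verifying the existence of a regular neighborhood with the stated retraction properties, but this is standard for a compact subcomplex of a smooth manifold, and in our algebraic-geometric setting it follows immediately from the fact that $D$ is a normal crossings divisor inside a smooth surface. Everything else is a bookkeeping exercise in classical duality, so I do not expect any genuine obstacle beyond correctly identifying the setup.
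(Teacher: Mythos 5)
Your proof is correct and follows essentially the same path as the paper's: excise to a regular neighbourhood $N$ of $D$, use that $N \setminus D$ deformation retracts onto $\partial N$, apply Lefschetz duality to the compact oriented $4$-manifold-with-boundary $N$, and finally use that $N$ retracts onto $D$. The only difference is that you spell out the existence of the regular neighbourhood, which the paper takes for granted.
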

\begin{proof}
	By excision, $H_k(X, M) \cong H_k(X - (X - N(D)), M - (X - N(D))) = H_k(N(D), N(D) - D)$, where $N(D)$ is a small neighbourhood of $D$ in $X$. $N(D)- D$ retracts to $\del N(D)$, and we apply Lefschetz duality to obtain $H_k(N(D), \del N(D)) \cong H^{4-k}(N(D)) \cong H^{4-k}(D)$.
\end{proof}

According to \cite[Lemma 1]{szabo_nemethi_geom_pw}, $D$ consists of seven $\P^1$'s, each intersecting two others transversely in one point each, forming a cycle of length $7$. Three consecutive components have self-intersection $-1$, and the remaining four have self-intersection $-2$ (\zcref{fig:p3d8_divisor}).

\begin{figure}[htbp]
	\centering
	\def\svgwidth{0.5\textwidth}
	%% Creator: Inkscape 1.4.2 (f4327f4, 2025-05-13), www.inkscape.org
%% PDF/EPS/PS + LaTeX output extension by Johan Engelen, 2010
%% Accompanies image file 'compactification_p3d8.pdf' (pdf, eps, ps)
%%
%% To include the image in your LaTeX document, write
%%   \input{<filename>.pdf_tex}
%%  instead of
%%   \includegraphics{<filename>.pdf}
%% To scale the image, write
%%   \def\svgwidth{<desired width>}
%%   \input{<filename>.pdf_tex}
%%  instead of
%%   \includegraphics[width=<desired width>]{<filename>.pdf}
%%
%% Images with a different path to the parent latex file can
%% be accessed with the `import' package (which may need to be
%% installed) using
%%   \usepackage{import}
%% in the preamble, and then including the image with
%%   \import{<path to file>}{<filename>.pdf_tex}
%% Alternatively, one can specify
%%   \graphicspath{{<path to file>/}}
%% 
%% For more information, please see info/svg-inkscape on CTAN:
%%   http://tug.ctan.org/tex-archive/info/svg-inkscape
%%
\begingroup%
  \makeatletter%
  \providecommand\color[2][]{%
    \errmessage{(Inkscape) Color is used for the text in Inkscape, but the package 'color.sty' is not loaded}%
    \renewcommand\color[2][]{}%
  }%
  \providecommand\transparent[1]{%
    \errmessage{(Inkscape) Transparency is used (non-zero) for the text in Inkscape, but the package 'transparent.sty' is not loaded}%
    \renewcommand\transparent[1]{}%
  }%
  \providecommand\rotatebox[2]{#2}%
  \newcommand*\fsize{\dimexpr\f@size pt\relax}%
  \newcommand*\lineheight[1]{\fontsize{\fsize}{#1\fsize}\selectfont}%
  \ifx\svgwidth\undefined%
    \setlength{\unitlength}{95.94991849bp}%
    \ifx\svgscale\undefined%
      \relax%
    \else%
      \setlength{\unitlength}{\unitlength * \real{\svgscale}}%
    \fi%
  \else%
    \setlength{\unitlength}{\svgwidth}%
  \fi%
  \global\let\svgwidth\undefined%
  \global\let\svgscale\undefined%
  \makeatother%
  \begin{picture}(1,0.32878372)%
    \lineheight{1}%
    \setlength\tabcolsep{0pt}%
    \put(0,0){\includegraphics[width=\unitlength,page=1]{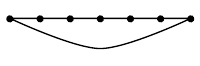}}%
    \put(0.08026722,0.26325316){\color[rgb]{0,0,0}\makebox(0,0)[lt]{\lineheight{1.25}\smash{\begin{tabular}[t]{l}-1\\\end{tabular}}}}%
    \put(0.23806016,0.26325316){\color[rgb]{0,0,0}\makebox(0,0)[lt]{\lineheight{1.25}\smash{\begin{tabular}[t]{l}-1\\\end{tabular}}}}%
    \put(0.39585313,0.26325316){\color[rgb]{0,0,0}\makebox(0,0)[lt]{\lineheight{1.25}\smash{\begin{tabular}[t]{l}-1\\\end{tabular}}}}%
    \put(0.55364612,0.26325316){\color[rgb]{0,0,0}\makebox(0,0)[lt]{\lineheight{1.25}\smash{\begin{tabular}[t]{l}-2\end{tabular}}}}%
    \put(0.71143909,0.26325316){\color[rgb]{0,0,0}\makebox(0,0)[lt]{\lineheight{1.25}\smash{\begin{tabular}[t]{l}-2\end{tabular}}}}%
    \put(0.86923203,0.26325316){\color[rgb]{0,0,0}\makebox(0,0)[lt]{\lineheight{1.25}\smash{\begin{tabular}[t]{l}-2\end{tabular}}}}%
    \put(0.4904771,0.03114536){\color[rgb]{0,0,0}\makebox(0,0)[lt]{\lineheight{1.25}\smash{\begin{tabular}[t]{l}-2\end{tabular}}}}%
  \end{picture}%
\endgroup%

	\caption{Plumbing graph of the compactification divisor $D = X - M$.}
	\label{fig:p3d8_divisor}
\end{figure}

Using the fact that $H_1(X) = H_3(X) = 0$ (\cite[Lemma 5]{szabo2021perversity}), consider the piece of the sequence of the pair $(X,M)$ given by

\[
	\begin{tikzcd}
		0=H_3(X) \arrow[r] & H^1(D) \arrow[r, "\imath"] & H_2(M) \arrow[r, "\jmath"] & H_2(X) \arrow[r, "\Phi"] & H^2(D) \arrow[r] & H_1(M) \arrow[r] & H_1(X) = 0.\\
		&\Z \arrow[u, phantom,sloped,"\cong"] & \Z \arrow[u, phantom,sloped,"\cong"] & \Z^7\arrow[u, phantom,sloped,"\cong"] & \Z^7 \arrow[u, phantom,sloped,"\cong"] & &
	\end{tikzcd}
\]

Note that we have substituted the terms $H_k(X, M) \cong H^{4-k}(D)$ according to \zcref{lem:lefschetz_duality}. The identifications for $H^k(D)$ are immediate from the description given above, and $H_2(X) = \Z^7$ follows from the facts that (1) the minimal resolution of a singular cubic is diffeomorphic to a smooth cubic, and (2) any smooth projective cubic can be obtained from $\P^2$ by  performing a blow-up at six points (representatives for the generators of $H_2(X)$ are given by the six exceptional divisors and the pullback of the class of a hyperplane); That $H_2(M) \cong \Z$ is a consequence of \cite[Proposition 2]{szabo2021perversity}.

Denote by $D_i$, $i = 1, \ldots, 7$, the irreducible components of $D$. The map $\Phi$ sends a class $[\alpha] \in H_2(X)$ to the functional $\alpha.D_i [D_i]^* \in \mathrm{Hom}(H_2(D), \Z) \cong H^2(D)$, and we wish to determine $\mathrm{coker}(\Phi)$.

The intersection pairing is a nondegenerate bilinear form on $H_2(X)$, making it into an unimodular lattice which we denote by $\Lambda$. Let $L$ be the sublattice generated by the classes $[D_i] \in H_2(X)$, and let $j: L \hookrightarrow L^\vee$ be the natural inclusion map $j_l(l') = l.l'$, for $l,l' \in L$.

The map $\Phi$ can be identified with $\Lambda \to L^\vee$, and one sees that $j = \Phi\vert_L$. Denote by $A_L := L^\vee / j(L)$ the discriminant group of $L$, and set $H:= \Phi(\Lambda) / j(L)$. Then $\mathrm{coker}(\Phi)$ is isomorphic to $A_L/H$, since
\[
	\mathrm{coker}(\Phi) = L^\vee/\Phi(\Lambda) \cong (L^\vee / j(L))/(\Phi(\Lambda) / j(L)) = A_L/H.
\]

By definition,

\[
	[A_L: H] = \frac{[L^\vee:j(L)]}{[\Phi(\Lambda):j(L)]};
\]

All indices appearing in this formula will turn out to be finite. Note that, also by definition, $[L^\vee: j(L)] = |A_L|$.

\begin{lemma}
	$[\Phi(\Lambda): j(L)] = [\Lambda: L].$
\end{lemma}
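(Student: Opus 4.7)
The plan is to deduce the equality of indices from the fact that $\Phi|_L = j$ by showing that $\Phi: \Lambda \to L^\vee$ is injective, so that it restricts to a bijection between cosets of $L$ in $\Lambda$ and cosets of $j(L)$ in $\Phi(\Lambda)$.

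First, I would identify the kernel. Since $\Phi(\alpha)([D_i]) = \alpha . D_i$, the kernel of $\Phi$ is the orthogonal complement $L^\perp \subset \Lambda$ of $L$ with respect to the intersection pairing. To show $L^\perp = 0$, it suffices to verify that $L$ has finite index in $\Lambda$, equivalently that the classes $[D_1], \ldots, [D_7]$ are linearly independent in $\Lambda \otimes \Q$. Once this holds, $\mathrm{rk}\, L = 7 = \mathrm{rk}\, \Lambda$, and because $\Lambda$ is a nondegenerate (in fact unimodular) lattice, the orthogonal complement of a full-rank sublattice has rank zero. Since $\Lambda$ is torsion-free, $L^\perp = 0$, so $\Phi$ is injective.

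To check the linear independence, I would compute the $7 \times 7$ determinant of the intersection matrix encoded by the plumbing graph in \zcref{fig:p3d8_divisor} — the cyclic matrix with diagonal entries $(-1,-1,-1,-2,-2,-2,-2)$ and $1$'s on the cyclic super- and sub-diagonals (including the corners). This determinant is nonzero, and in fact computes (up to sign) the order $|A_L|$; this step is the most computational but entirely routine.

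With injectivity of $\Phi$ in hand, $\Phi$ restricts to an isomorphism $\Lambda \xrightarrow{\sim} \Phi(\Lambda)$ carrying $L$ to $\Phi(L) = j(L)$, so it induces an isomorphism of quotients $\Lambda/L \xrightarrow{\sim} \Phi(\Lambda)/j(L)$, which immediately gives $[\Phi(\Lambda) : j(L)] = [\Lambda : L]$. No obstacle beyond the determinant check is expected; the main conceptual input is the unimodularity of $\Lambda = H_2(X)$ for the rational surface $X$, already used above.
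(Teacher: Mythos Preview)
Your proof is correct and reaches the same conclusion as the paper via the same final step (the isomorphism $\Lambda/L \cong \Phi(\Lambda)/j(L)$ induced by $\Phi$), but you establish the crucial injectivity of $\Phi$ by a different route. The paper argues topologically: in the long exact sequence of the pair $(X,M)$, the map $\jmath: H_2(M) \to H_2(X)$ is shown to be zero (using that $H_2(M)\cong\Z$, that $\imath$ is injective, and that $H_2(X)$ is torsion-free), so $\ker\Phi = \mathrm{im}\,\jmath = 0$ by exactness. You instead argue lattice-theoretically: $\ker\Phi = L^\perp$, and since the Gram matrix of the $[D_i]$ has nonzero determinant, $L$ has full rank in the unimodular lattice $\Lambda$, forcing $L^\perp = 0$.

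Your approach has the virtue of being self-contained in lattice terms and not relying on the external input $H_2(M)\cong\Z$ from \cite{szabo2021perversity}; on the other hand, it front-loads the determinant computation that the paper defers to the next lemma (where $\det Q = 4$ is computed). Either way the argument is sound.
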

\begin{proof}
	Note first of all that $\Phi$ is injective: this follows from the fact that $\jmath$ in the long exact sequence is the zero map ($\imath$ is injective, and if $\mathrm{im}(\imath) \subset \Z$ were a subgroup of index $k>1$, then $\jmath(kv) = k \jmath(v) = 0$ for all $v \in H_2(M)$, implying that $\jmath \equiv 0$ since $H_2(X)$ has no torsion. But then $H_2(M) = \ker  \jmath$, so $k=1$). Hence $\jmath$ is the zero map, and $\Phi$ is injective.

	In the diagram
	\[
		\begin{tikzcd}
			\Lambda \arrow[d] \arrow[r, "\Phi"] & \Phi(\Lambda) \arrow[d]\\
			\Lambda/L \arrow[r] & \Phi(\Lambda)/j(L),
		\end{tikzcd}
	\]
	injectivity of $\Phi$ implies the top arrow is an isomorphism; The bottom horizontal map is well-defined because $\Phi\vert_L = j$, evidently surjective, and its kernel is $\Phi^{-1}(j(L))/L = \{0\}$. This implies the lemma.
\end{proof}

It remains to compute $[A_L: H]$.

\begin{lemma}
	We have $[\Lambda: L] = 2$ and $|A_L| = 4$.
\end{lemma}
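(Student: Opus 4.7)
The plan is to handle the two assertions in a coordinated way: compute $|A_L|$ by direct evaluation of the determinant of the intersection form on $L$, and then deduce $[\Lambda:L]$ from unimodularity of $\Lambda$ via the standard discriminant--index formula.

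First I would write down the Gram matrix $Q_L$ of the intersection form on $L$ in the basis $\{[D_1], \ldots, [D_7]\}$. By the plumbing graph given above, $Q_L$ is a $7 \times 7$ cyclic tridiagonal-plus-corner matrix: the diagonal reads $(-1,-1,-1,-2,-2,-2,-2)$ with the three $-1$'s consecutive, and every off-diagonal entry is $1$ for pairs of vertices adjacent in the cycle (including the two wraparound corners) and $0$ otherwise. Since $L$ is free of rank $7$, we have $|A_L| = |\det Q_L|$.

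Second, I would compute $\det Q_L$ by cofactor expansion along the last row of $Q_L$, whose only nonzero entries are the diagonal $-2$ and the two $1$'s corresponding to the incident cycle edges. Each of the three resulting $6 \times 6$ minors either is tridiagonal (the intersection form of a linear subchain of $D$) or reduces to a tridiagonal determinant after one further expansion along a row whose single nonzero entry is isolated. The tridiagonal determinants are then evaluated via the standard continuant recursion $K_n = e_n K_{n-1} - K_{n-2}$, starting from $K_0 = 1$, $K_1 = e_1$, applied to the appropriate subchains of self-intersections. A direct tabulation yields $\det Q_L = 4$, and hence $|A_L| = 4$.

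For the index $[\Lambda:L]$, I would invoke that $X$, being the minimal resolution of a singular cubic surface, is diffeomorphic to $\P^2$ blown up at six points---in particular smooth, closed, and simply connected---so $\Lambda = H_2(X;\Z)$ is unimodular by Poincaré duality, i.e.\ $|\mathrm{disc}(\Lambda)| = 1$. Since $\det Q_L \neq 0$, the inclusion $L \hookrightarrow \Lambda$ is of full rank, and the standard identity $|\mathrm{disc}(L)| = [\Lambda:L]^2 \cdot |\mathrm{disc}(\Lambda)|$ gives $[\Lambda:L]^2 = 4$, whence $[\Lambda:L] = 2$. The main obstacle I foresee is purely bookkeeping: arranging the cofactor expansion so that one does not miscount the sign contributions from the two wraparound corners of the cycle, and correctly identifying which linear subchain of self-intersections corresponds to each minor; the conceptual content---unimodularity of $\Lambda$ combined with the discriminant--index formula---is immediate once $\det Q_L$ is in hand.
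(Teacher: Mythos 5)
Your proposal is correct and takes essentially the same approach as the paper: both compute $\lvert A_L\rvert = \lvert\det Q\rvert = 4$ from the Gram matrix of the cycle of $\P^1$'s and then deduce $[\Lambda:L]=2$ from unimodularity of $\Lambda$ via $[\Lambda:L]^2 = \mathrm{disc}(L)/\mathrm{disc}(\Lambda)$. You spell out the determinant evaluation (cofactor expansion and continuants) in more detail, while the paper states the value and additionally records the Smith normal form $A_L \cong \Z/2\Z \oplus \Z/2\Z$, which goes beyond what the lemma itself asserts.
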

\begin{proof}
	To compute $[\Lambda: L]$, form the Gram matrix $Q$ of $L$, which is given by the intersection form of $D$; it can be read off from \zcref{fig:p3d8_divisor} to be
	\[
		Q = \begin{pmatrix}
			-1 & 1  & 0  & 0  & 0  & 0  & 1  \\
			1  & -1 & 1  & 0  & 0  & 0  & 0  \\
			0  & 1  & -1 & 1  & 0  & 0  & 0  \\
			0  & 0  & 1  & -2 & 1  & 0  & 0  \\
			0  & 0  & 0  & 1  & -2 & 1  & 0  \\
			0  & 0  & 0  & 0  & 1  & -2 & 1  \\
			1  & 0  & 0  & 0  & 0  & 1  & -2
		\end{pmatrix}.
	\]

	Its determinant is $4$; This implies that the $[D_i]$ form a collection of seven $\Z$-linearly independent vectors in $H_2(X) \cong \Z^7$, so that $[\Lambda:L]$ is finite. In this case,
	\[
		[\Lambda: L]^2 = \frac{\mathrm{disc}(L)}{\mathrm{disc}(\Lambda)},
	\]
	(see e.g. \cite{lattice}). $\Lambda$ being unimodular, we see that the $[\Lambda:L] = 2$ (the discriminant $\mathrm{disc}(L)$ is the absolute value of the determinant of the Gram matrix).

	The discriminant group $A_L$ can be computed from the Smith normal form of $Q$. Performing this computation yields
	\[
		A_L \cong \Z/2\Z \oplus \Z/2\Z.
	\]
\end{proof}

\begin{corollary}
	$H_1(M_{III(D8)};\Z) \cong \Z/2\Z$.
\end{corollary}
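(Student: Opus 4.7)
The plan is simply to assemble the ingredients already established in the preceding two lemmas and the displayed exact sequence. From the long exact sequence of the pair $(X, M)$ together with $H_1(X) = 0$, the map $H^2(D) \to H_1(M)$ is surjective with kernel $\Phi(\Lambda)$, so $H_1(M) \cong L^\vee / \Phi(\Lambda) = \mathrm{coker}(\Phi)$. The earlier discussion identifies this cokernel with $A_L/H$ via the index formula
\[
[A_L : H] = \frac{[L^\vee : j(L)]}{[\Phi(\Lambda):j(L)]} = \frac{|A_L|}{[\Lambda : L]}.
\]

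With the values computed in the preceding lemma, namely $|A_L| = 4$ and $[\Lambda:L] = 2$, this gives $|A_L/H| = 2$. The only abelian group of order $2$ is $\Z/2\Z$, so $H_1(M_{III(D8)};\Z) \cong \Z/2\Z$.

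The only step requiring any care is making sure the identification $\mathrm{coker}(\Phi) \cong H_1(M)$ is honest, i.e., that the map $H^2(D) \to H_1(M)$ in the long exact sequence is indeed surjective and has image isomorphic to $L^\vee/\Phi(\Lambda)$; this is automatic from $H_1(X) = 0$ and the Lefschetz-duality identification $H_k(X,M) \cong H^{4-k}(D)$ from \zcref{lem:lefschetz_duality}. Everything else is arithmetic that has already been carried out, so no further computation is needed.
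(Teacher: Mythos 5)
Your proposal is correct and follows essentially the same route as the paper: identify $H_1(M)$ with $\mathrm{coker}(\Phi) = L^\vee/\Phi(\Lambda) \cong A_L/H$ via the long exact sequence and $H_1(X) = 0$, then compute $[A_L:H] = |A_L|/[\Lambda:L] = 4/2 = 2$ using the two preceding lemmas. The paper's own proof is just the one-line conclusion from the already-established index formula, which is exactly what you carry out.
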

\begin{proof}
	The preceding lemma implies that $[A_L: H] = 2$, and thus $\mathrm{coker}(\Phi) \cong \Z/2\Z$.
\end{proof}

\addtocontents{toc}{\SkipTocEntry}

\subsection{The PIV moduli space}

\label{subsec:PIV-moduli-space}

% \subsubsection{The handlebody}
% \label{subsec:PIV-handlebody}

% We document the computations to obtain the handlebody here, corroborating the results by Živanović-Szabó \cite{szabo2024floer}. 

The PIV moduli space is characterized by
\begin{equation}
   \mathbf{M}_{IV} = \{ xyz + x^2 - (s_2^2+s_1s_2)x -s_2^2y -s_2^2z + s_2^2 + s_1 s_2^3 = 0, \quad s_1 \in \C, \, s_2 \in \C^*\}.
\end{equation}
Choosing $s_2 = -s_1 = \frac{1}{2}$ yields the affine variety $M_{IV} = \{xyz + x^2 -\frac{1}{4}y - \frac{1}{4}z + \frac{3}{16} = 0\}$. We endow it with the Lefschetz bifibration $(\pi, \rho) = (3x - 0.5y+2/3z, -4.5 x +  z)$. The base point for the distinguished bases of vanishing paths is $0$ for both $\pi$ and $\rho$. \zcref{fig:p4-matching} shows the outcome of \zcref{matching_cycle_construction}.

\begin{figure}[htbp]
    \centering
    \def\svgwidth{\textwidth}
    \import{inkscape_images}{basis_p4_correct.pdf_tex}

    \caption{Left: the collection of matching paths of $\rho$ induced by $\pi$. Right: A basis of matching paths for $\rho$.}
    \label{fig:p4-matching}
\end{figure}

One can factor the paths on the left in terms of the paths on the right of \zcref{fig:p4-matching} as below (up to V-moves), yielding expressions for the vanishing cycles $C_i$:
\[
    \begin{aligned}
        C_{0} & = \tau_\delta^{-1} \alpha,                        \\
        C_{1} & = \delta,                                         \\
        C_{2} & = \tau_\beta^{-1} \tau_\gamma \tau_\delta \alpha, \\
        C_{3} & = \tau_\beta^{-1}\tau_\gamma \alpha,              \\
        C_{4} & = \tau_\beta^{-1}\alpha                           \\
        C_{5} & = \gamma,                                         \\
        C_{6} & = \tau_\gamma \alpha
    \end{aligned}
    \xrightarrow{\mathcal{H}_1+\mathcal{H}_6^{-1}} \quad
    \begin{aligned}
        C_{0} & = \delta,                                         \\
        C_{1} & = \alpha,                                         \\
        C_{2} & = \tau_\beta^{-1} \tau_\gamma \tau_\delta \alpha, \\
        C_{3} & = \tau_\beta^{-1}\tau_\gamma \alpha,              \\
        C_{4} & = \tau_\beta^{-1}\alpha                           \\
        C_{5} & = \alpha                                          \\
        C_{6} & = \gamma,                                         \\
    \end{aligned}
    \xrightarrow{\mathcal{H}_3} \quad
    \begin{aligned}
        C_{0} & = \delta,                            \\
        C_{1} & = \alpha,                            \\
        C_{2} & = \tau_\beta^{-1}\tau_\gamma \alpha, \\
        C_{3} & = \delta,                            \\
        C_{4} & = \tau_\beta^{-1}\alpha              \\
        C_{5} & = \alpha                             \\
        C_{6} & = \gamma,                            \\
    \end{aligned}
\]
We first perform the Hurwitz moves $\mathcal{H}_1$ and $\mathcal{H}_6^{-1}$. We moreover apply $\mathcal{H}_3$, which has the effect of replacing $C_2$ with $C_3$ and $C_3$ with $\tau_{C_3}(C_2)$; One checks directly that $\tau_{C_3}(C_2) = \delta$.

The front from this data and subsequent simplifications leading to the diagram in \zcref{fig:main_results} is shown in \zcref{fig:p4_simplify}.

\begin{figure}[htbp]
    \centering
    \def\svgwidth{\textwidth}
    \import{inkscape_images}{p4_correct.pdf_tex}
    \caption{Legendrian front and simplifications for PIV.}
    \label{fig:p4_simplify}
\end{figure}

\addtocontents{toc}{\SkipTocEntry}

\subsection{The PV moduli space}
\label{subsec:PV-moduli-space}

The PV moduli space is characterized by
\begin{equation}
	\label{eq:PV-moduli-space}
	\mathbf{M}_{V} = \{ xyz + x^2 +y^2 -(s_1 + s_2s_3)x - (s_2 + s_1 s_3)y - s_3 z + s_3^2 + s_1s_2s_3 = 0, \quad s_1,s_2 \in \C, \, s_3 \in \C^*\}.
\end{equation}

Choosing $s_1=i, s_2=-i, s_3=1$ yields the affine variety $M_{V} = \{ xyz + x^2 + y^2 - z + 3 = 0$\}. We endow it with the Lefschetz bifibration $(\pi, \rho) = (3x -1y-2z, 1 x +  1.5 z)$. The base point for the distinguished bases of vanishing paths is $0$ for both $\pi$ and $\rho$. \zcref{fig:p5-matching} shows the outcome of \zcref{matching_cycle_construction}.

\begin{figure}[htbp]
	\centering
	\def\svgwidth{0.7\textwidth}
	\import{inkscape_images}{basis_p5.pdf_tex}
	\caption{Left: the collection of matching paths of $\rho$ induced by $\pi$. Right: A basis of matching paths for $\rho$.}
	\label{fig:p5-matching}
\end{figure}

One can factor the paths on the left in terms of the paths on the right of \zcref{fig:p5-matching} as below (up to V-moves).
\[
	\begin{aligned}
		C_{0} & = \alpha,                                \\
		C_{1} & = \alpha,                                \\
		C_{2} & = \delta,                                \\
		C_{3} & = \beta,                                 \\
		C_{4} & = \tau_\gamma^{-1}\tau^{-1} _\alpha\beta \\
		C_{5} & = \tau_\delta \tau_\gamma^{-1} \alpha    \\
		C_{6} & = \gamma,                                \\
		C_7   & =  \gamma.
	\end{aligned}
	\xrightarrow{\mathcal{H}_6 + \mathcal{H}_5}\quad
	\begin{aligned}
		C_0   & = \alpha,                                    \\
		C_{1} & = \alpha,                                    \\
		C_{2} & = \delta,                                    \\
		C_{3} & = \beta,                                     \\
		C_{4} & = \gamma                                     \\
		C_{5} & = \tau_\delta^{-1}\beta = \tau_\beta \alpha, \\
		C_{6} & = \tau_\delta,                               \\
		C_{7} & = \gamma.
	\end{aligned}
\]

The front from this data and subsequent simplifications leading to the diagram in \zcref{fig:main_results} is shown in \zcref{fig:p5_simplify}.

\begin{figure}[htbp]
	\centering
	\def\svgwidth{\textwidth}
	%% Creator: Inkscape 1.4.2 (f4327f4, 2025-05-13), www.inkscape.org
%% PDF/EPS/PS + LaTeX output extension by Johan Engelen, 2010
%% Accompanies image file 'p5_simpler.pdf' (pdf, eps, ps)
%%
%% To include the image in your LaTeX document, write
%%   \input{<filename>.pdf_tex}
%%  instead of
%%   \includegraphics{<filename>.pdf}
%% To scale the image, write
%%   \def\svgwidth{<desired width>}
%%   \input{<filename>.pdf_tex}
%%  instead of
%%   \includegraphics[width=<desired width>]{<filename>.pdf}
%%
%% Images with a different path to the parent latex file can
%% be accessed with the `import' package (which may need to be
%% installed) using
%%   \usepackage{import}
%% in the preamble, and then including the image with
%%   \import{<path to file>}{<filename>.pdf_tex}
%% Alternatively, one can specify
%%   \graphicspath{{<path to file>/}}
%% 
%% For more information, please see info/svg-inkscape on CTAN:
%%   http://tug.ctan.org/tex-archive/info/svg-inkscape
%%
\begingroup%
  \makeatletter%
  \providecommand\color[2][]{%
    \errmessage{(Inkscape) Color is used for the text in Inkscape, but the package 'color.sty' is not loaded}%
    \renewcommand\color[2][]{}%
  }%
  \providecommand\transparent[1]{%
    \errmessage{(Inkscape) Transparency is used (non-zero) for the text in Inkscape, but the package 'transparent.sty' is not loaded}%
    \renewcommand\transparent[1]{}%
  }%
  \providecommand\rotatebox[2]{#2}%
  \newcommand*\fsize{\dimexpr\f@size pt\relax}%
  \newcommand*\lineheight[1]{\fontsize{\fsize}{#1\fsize}\selectfont}%
  \ifx\svgwidth\undefined%
    \setlength{\unitlength}{204.81448196bp}%
    \ifx\svgscale\undefined%
      \relax%
    \else%
      \setlength{\unitlength}{\unitlength * \real{\svgscale}}%
    \fi%
  \else%
    \setlength{\unitlength}{\svgwidth}%
  \fi%
  \global\let\svgwidth\undefined%
  \global\let\svgscale\undefined%
  \makeatother%
  \begin{picture}(1,0.62851883)%
    \lineheight{1}%
    \setlength\tabcolsep{0pt}%
    \put(0,0){\includegraphics[width=\unitlength,page=1]{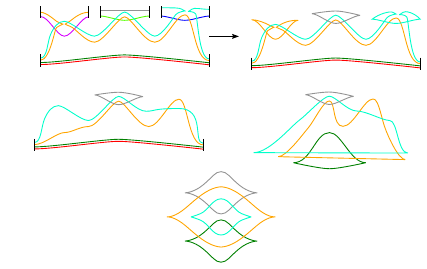}}%
    \put(0.50726098,0.55108247){\color[rgb]{0,0,0}\makebox(0,0)[lt]{\lineheight{1.25}\smash{\begin{tabular}[t]{l}$\text{C}$\end{tabular}}}}%
    \put(0,0){\includegraphics[width=\unitlength,page=2]{p5_simpler.pdf}}%
    \put(0.01415951,0.3460507){\color[rgb]{0,0,0}\makebox(0,0)[lt]{\lineheight{1.25}\smash{\begin{tabular}[t]{l}$\text{R1, R2}$\end{tabular}}}}%
    \put(0,0){\includegraphics[width=\unitlength,page=3]{p5_simpler.pdf}}%
    \put(0.51300922,0.33196879){\color[rgb]{0,0,0}\makebox(0,0)[lt]{\lineheight{1.25}\smash{\begin{tabular}[t]{l}$\text{C}$\end{tabular}}}}%
    \put(0,0){\includegraphics[width=\unitlength,page=4]{p5_simpler.pdf}}%
    \put(0.27752668,0.1267661){\color[rgb]{0,0,0}\makebox(0,0)[lt]{\lineheight{1.25}\smash{\begin{tabular}[t]{l}$\text{R2}$\end{tabular}}}}%
  \end{picture}%
\endgroup%

	\caption{Legendrian front and simplifications for PV.}
	\label{fig:p5_simplify}
\end{figure}

\addtocontents{toc}{\SkipTocEntry}

\subsection{The PVI moduli space}
\label{sec:t222}

The PVI moduli space is characterized by
\[
    \mathbf{M}_{VI} = \{ xyz + x^2 + y^2 + z^2 - s_1x - s_2y - s_3z + s_4 = 0\},
\]
where $s_i = a_ia_4 + a_ja_k$ for $(i,j,k)$ a cyclic permutation of $(1,2,3)$, and $s_4 = a_1a_2a_3a_4 + a_1^2 + a_2^2 + a_3^2+a_4^2-4$ with $a_i \in \C$.

We choose $a_1=1$ and $a_i=0$ for $i>1$, yielding the affine variety $M_{VI}= \{xyz + x^2 + y^2 + z^2 - 3 = 0\}$. We endow it with the Lefschetz bifibration $(\pi, \rho) = (0.1x - 0.5iy - 0.2z, 0.9x + 0.5y)$. The base point for the distinguished bases of vanishing paths are $0$ for $\pi$ and $-15i$ for $\rho$, respectively. The induced matching paths together with a basis are shown in \zcref{fig:t222_basis}. Observe that this affine equation coincides with the space $T_{2,2,2}$ from \cite{ailsa_tori}, a handelbody diagram for which was computed in \cite{orsola_thesis}. We expect it to agree with ours after simplifications.

\begin{figure}[htbp]
    \centering
    \def\svgwidth{\textwidth}
    \import{inkscape_images}{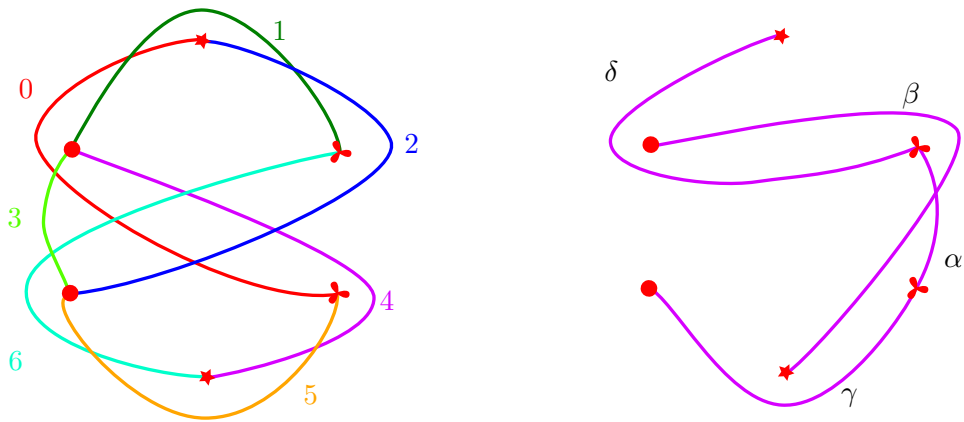}
    \caption{Left: the collection of matching paths of $\rho$ induced by $\pi$. Right: A basis of matching paths for $\rho$.}
    \label{fig:t222_basis}
\end{figure}

One can factor the paths on the left in terms of the paths on the right of \zcref{fig:t222_basis} as below (up to V-moves).

\[
    \begin{aligned}
        C_{0} & = \alpha,                                         \\
        C_{1} & = \alpha,                                         \\
        C_{2} & = \delta,                                         \\
        C_{3} & = \delta,                                         \\
        C_{4} & = \tau_\delta \tau_\beta^{-1} \alpha              \\
        C_{5} & = \tau_\beta^{-1} \tau_\delta \alpha              \\
        C_{6} & = \tau_\beta \tau_\delta \tau_\gamma^{-1} \alpha, \\
        C_7   & =  \beta,                                         \\
        C_9   & = \beta.
    \end{aligned}
\]

The resulting handlebody and subsequent simplifications is shown in \zcref{fig:p6_legendrian}

\begin{figure}[htbp]
    \centering
    \def\svgwidth{\textwidth}
    \import{inkscape_images}{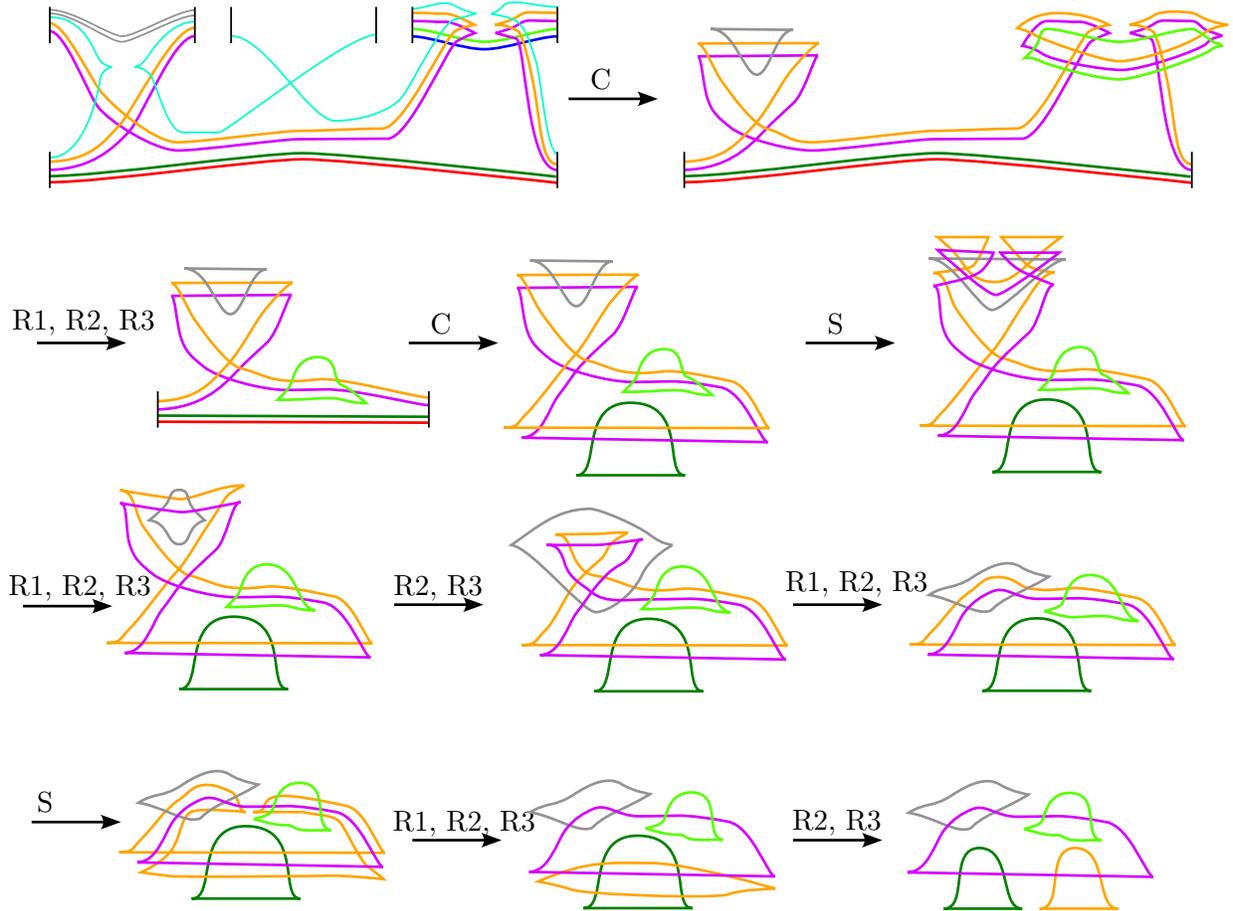}
    \caption{Legendrian front and simplifications for PVI. The form presented in \zcref{fig:main_results} can be obtained by observing that the cusps of the little unknots can be moved in- or outside the purple unknot by a Reidemeister 1 move on the smooth end, isotoping the resulting crossing past the purple knot, and undoing the twist thus generated with another Reidemeister 1 move.}
    \label{fig:p6_legendrian}
\end{figure}

We also remark that $PVI$ is the unique Painlev\'e moduli space whose compactified Betti moduli space $\overline{\mathcal{M}}_{B}^{PVI}$ is smooth at infinity--see for example \cite[Section 4.1]{szabo2021perversity}.

\section{Handle attachment to the Stokes Legendrian}
\label{sec:stokes_lifts}

Recall that each of the spaces $\mathbf{M}_X$ is associated to a (possibly) irregular singularity type, which is encoded in a Legendrian projecting to a front on $\P^1 - \mathbf{p}_X$ encircling the punctures. The cardinality of $\mathbf{p}_X$ ranges from $1$ to $4$ depending on $X$, as seen in \zcref{fig:stokes_legendrians}. To compute this Legendrian, one may proceed as outlined in \cite[Example 2.5]{casals_nho} to obtain the Betti surfaces from \zcref{fig:stokes_legendrians}; the only input required for the computation are the \emph{generalized local exponents} of \cite[Section 4]{saito_vput}. The authors have written an implementation in \cite{hbds_repo}. Another, web-hosted implementation along with an excellent explanation of how to obtain the Stokes Legendrians can be found in \cite{boalch_program}.

We are interested in comparing the spaces $M_X$ (generic members of the family $\mathbf{M}_X$) to the spaces obtained by attaching handles along the Stokes Legendrians. An algorithm to determine a front of Legendrian lifts of cooriented curves in \emph{closed} surfaces is presented in \cite{acu_complements}; we extend this algorithm to the case with boundary in the next subsection.

\subsection{Legendrian lifts of curves in surfaces with boundary}
Observe that, \emph{a priori}, spaces arising from Weinstein handle attachment to Legendrians in the unit cotangent bundle of surfaces with boundary are merely Weinstein \emph{sectors}. One still has the structures of an exhausting Morse function and a Liouville vector field $(Z, \phi)$, but now there is a subset of the sublevel sets of $\phi$ to which $Z$ can be arranged to be tangent - known as the \textbf{sectorial boundary}. One can produce a unique Weinstein manifold from a sector via convex completion in the sense of \cite[Section 2.7]{gps1}.

This operation attaches to the sectorial boundary a convex end $F \times \C_{\mathrm{Re\geq 0}}$ for some Weinstein manifold $F$, and extends the structures $(Z, \phi)$ to a Weinstein manifold structure without changing them on the original sector. On the ideal boundary, which is necessarily a contact manifold with boundary, it has the effect of capping it off to produce a closed contact manifold. Let us write $T^*_cQ$ for the convex completion of the cotangent bundle of a manifold $Q$ with boundary. We now specialize to surfaces, and let $\Sigma_{g,b}$ denote a surface of genus $g$ with $b$ boundary components.

Our first lemma describes the ideal boundary $\del_\infty T^*_c \Sigma_{g,b}$.

\begin{lemma}\label{lem:ideal_sector_boundary}
	There is a contact isotopy between $\del_\infty T^*_c \Sigma_{g,b}$ and $\del_\infty (T^{\ast}\Sigma_{g,0})_1$, the ideal boundary of the subcritical part of $T^{\ast}\Sigma_{g,0}$.
\end{lemma}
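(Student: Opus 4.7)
The plan is to identify both sides as the ideal contact boundary of a subcritical Weinstein $4$-manifold diffeomorphic to $\natural^{2g}(S^1\times D^3)$, and then invoke uniqueness of the subcritically fillable contact structure on $\#^{2g}(S^1\times S^2)$ to conclude.

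For the right-hand side this is immediate: fix a Morse function on $\Sigma_{g,0}$ with one minimum, $2g$ saddles and one maximum, and consider the induced Weinstein structure on $T^*\Sigma_{g,0}$. Its Weinstein handle decomposition has one $0$-handle, $2g$ $1$-handles and one $2$-handle, so the subcritical part is diffeomorphic to $\natural^{2g}(S^1\times D^3)$ with ideal boundary $\#^{2g}(S^1\times S^2)$ equipped with its standard tight contact structure.

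For the left-hand side, I would first establish subcriticality of $T^{\ast}_c\Sigma_{g,b}$: for $b\geq 1$, $\Sigma_{g,b}$ retracts onto a $1$-complex, so it admits a Morse function with $\partial\Sigma_{g,b}$ as a regular level set and all interior critical points of index at most $1$, giving $T^*\Sigma_{g,b}$ the structure of a subcritical Weinstein sector. The convex completion of \cite[Section 2.7]{gps1} attaches a convex end $F\times\mathbb{C}_{\mathrm{Re}\geq 0}$, where $F$ is a Weinstein $2$-manifold built from the sectorial boundary and hence has no index-$2$ handles; crossing with $\mathbb{C}_{\mathrm{Re}\geq 0}$ preserves this bound, so the completion $T^{\ast}_c\Sigma_{g,b}$ is subcritical as a $4$-manifold. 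The diffeomorphism type is then determined by computing $H_*(T^{\ast}_c\Sigma_{g,b};\mathbb{Z})$ via Mayer--Vietoris on $T^{\ast}_c\Sigma_{g,b}=T^*\Sigma_{g,b}\cup(F\times\mathbb{C}_{\mathrm{Re}\geq 0})$. Here $T^*\Sigma_{g,b}\simeq\Sigma_{g,b}$, $F\times\mathbb{C}_{\mathrm{Re}\geq 0}\simeq F\simeq\bigsqcup_b T^*S^1\simeq\bigsqcup_b S^1$, and their intersection retracts onto $\partial\Sigma_{g,b}=\bigsqcup_b S^1$. The Mayer--Vietoris map identifies each boundary circle of $\Sigma_{g,b}$ with the corresponding core circle of the convex end, and since the $b$ boundary classes in $H_1(\Sigma_{g,b};\mathbb{Z})=\mathbb{Z}^{2g+b-1}$ satisfy exactly one relation (their sum vanishes), the resulting quotient yields $H_1(T^{\ast}_c\Sigma_{g,b};\mathbb{Z})=\mathbb{Z}^{2g}$ with $H_{\geq 2}=0$. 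Combined with subcriticality this forces $T^{\ast}_c\Sigma_{g,b}\cong\natural^{2g}(S^1\times D^3)$.

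To finish, I would invoke the uniqueness of subcritical Weinstein structures on $\natural^{2g}(S^1\times D^3)$: any two such are Weinstein deformation equivalent, ultimately because the isotropic $0$-spheres used to build $1$-handles in the standard contact $S^3$ lie in a single contactomorphism orbit (see \cite{ce_stein_weinstein}). The resulting Weinstein homotopy induces a family of contactomorphisms of ideal boundaries---the desired contact isotopy. The hard part will be the Mayer--Vietoris identification above: verifying that each boundary circle of $\Sigma_{g,b}$ is really sent to the core of the corresponding component of the convex end requires a careful unpacking of the convex completion in \cite{gps1}. A more hands-on alternative would be to identify $T^{\ast}_c\Sigma_{g,b}$ directly with the sublevel set of a Weinstein function on $T^*\Sigma_{g,0}$ lying just below the critical value of the $2$-handle, from which the lemma is essentially immediate.
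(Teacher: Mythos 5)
Your overall strategy---identify both sides as ideal boundaries of subcritical Weinstein $4$-manifolds and invoke uniqueness of subcritical fillings---is reasonable in spirit, and the ``hands-on alternative'' you mention at the end is in fact essentially what the paper does (the paper builds domains $W_{\mathrm{sect}} \subset W_c, W_f$ where $W_f$ is exactly a sublevel set of $\phi_f$ just below the $2$-handle critical values, constructs a deformation equivalence $W_c \to W_f$ relative to $W_{\mathrm{sect}}$, and then observes $W_f \setminus \Phi(W_c)$ is a critical-point-free Weinstein cobordism). However, the main line of your argument contains a computational error that makes the numbers come out wrong, and as a result you would be proving a statement that is false for $b > 1$.

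The Mayer--Vietoris computation for the left-hand side is incorrect. Convex completion glues $F \times \C_{\mathrm{Re}\geq 0}$ to $T^*\Sigma_{g,b}$ along (a neighbourhood of) the sectorial boundary, which is a collar attachment and hence a homotopy equivalence; so $T^*_c\Sigma_{g,b} \simeq \Sigma_{g,b}$ and $H_1(T^*_c\Sigma_{g,b};\Z) = \Z^{2g+b-1}$, not $\Z^{2g}$. Concretely, in your sequence $H_1(\bigsqcup_b S^1) \to H_1(\Sigma_{g,b}) \oplus H_1(F) \to H_1(T^*_c\Sigma_{g,b}) \to 0$, each boundary class $e_i$ maps to $(\partial_i, c_i)$, and in the cokernel the classes $(0,c_i)$ are \emph{identified with} $(-\partial_i, 0)$ rather than killed. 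Since the second-factor projection of the image is injective, the cokernel is $\Z^{2g+b-1}$: identifying a cycle with another nontrivial cycle does not make it trivial. Thus $T^*_c\Sigma_{g,b} \cong \natural^{2g+b-1}(S^1 \times D^3)$, not $\natural^{2g}(S^1 \times D^3)$.

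This error propagates to your reading of the right-hand side. You take a perfect Morse function on $\Sigma_{g,0}$ with one maximum, obtaining subcritical part $\natural^{2g}(S^1 \times D^3)$; but the Morse function the paper has in mind (as fixed in the proof and used throughout \zcref{lem:boundary_diagram} and \zcref{prop:lifting_with_boundary}) has $2g+b-1$ index-$1$ critical points and $b$ index-$2$ critical points, so ``the subcritical part'' is $\natural^{2g+b-1}(S^1\times D^3)$ with ideal boundary $\#^{2g+b-1}(S^1\times S^2)$. With your choice of Morse function on $\Sigma_{g,0}$, the lemma is actually false whenever $b > 1$: the left side has first homology $\Z^{2g+b-1}$ while the right would be $\#^{2g}(S^1\times S^2)$. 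With the corrected count of $2g+b-1$ on both sides, the proof by uniqueness of subcritical Weinstein structures goes through and gives a (more implicit, but valid) alternative to the paper's cobordism construction.
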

\begin{proof}
		We start by considering the morsified Weinstein structure  $(T^*\Sigma_{g,0}, \lambda_f, \phi_f)$ in the sense of \cite[Section 7]{acu_complements} coming from a self-indexing Morse function $f: \Sigma_{g,0} \to \R$ with a unique index-$0$ critical point, $2g+b-1$ index-$1$ critical points, and $b$ index-$2$ critical points. This Weinstein structure is homotopic to the standard one, and has the following properties:

	\begin{itemize}
		\item The critical points of $\phi_f = \frac{1}{2}\norm{p}^2 + f: T^*\Sigma_{g,0} \to \R$ are given by $\{(q_0, 0) \in T^*\Sigma_{g,0} \mid q_0 \in \mathrm{Crit}(f) \}$. The norm in the expression for $\phi_f$ is with respect to any choice of metric.
		\item The skeleton of $(T^*\Sigma_{g,0}, \lambda_f, \phi_f)$ consists of the zero section.
		\item The Liouville vector field $Z_f$ restricted to the zero section coincides with the gradient flow of $f$.
	\end{itemize}

	We model our surface with boundary by $\Sigma_{g,b} = f^{-1}((-\infty, d])$ for some regular value $d$ lying in between the index-$1$ and -$2$ critical values of $f$. Endow the sector $T^*\Sigma_{g,b}$ with the analogous structures $(\lambda_h, Z_h, \phi_h)$ induced by restricion of $(\lambda_f, Z_f, \phi_f)$ to $T^*\Sigma_{g,b} \subset T^*\Sigma_{g,0}$, and consider the domains
	\begin{align*}
		W_{\mathrm{sect}} &:= \phi_h^{-1}((-\infty, d']) \subset T^*\Sigma_{g,b},\\
		W_c &:= \phi_c^{-1}((-\infty, d']), \\
		W_f &:= \phi_f^{-1}((-\infty, d'']), 
	\end{align*}
	where $d''>d'>d$ are regular values of $\phi_f$, still smaller than the index-$2$ critical values. There is a commutative diagram
	\[
		\begin{tikzcd}
		W_c \arrow[rr, "\sim"', "\Phi"] & & W_f\\
		& W_{\mathrm{sect}} \arrow[ul, hook, "\imath_c"] \arrow[ur, hook, "\imath_f"']
	\end{tikzcd}
	\]
	such that $\imath_f^*\lambda_f = \imath_c^* \lambda_c = \lambda_h$, and $\imath_f^*\phi_f = \imath_c^* \phi_c = \phi_h$: the maps $\imath_c$ and $\imath_f$ are inclusion maps, and $\Phi$ is, a priori, just a Weinstein deformation equivalence, which exists because both $W_c$ and $W_f$ are subcritical. By the fact that $\phi_f$ and $\phi_c$ agree on $W_{\mathrm{sect}}$, and because all their critical points lie inside $W_{\mathrm{sect}}$, we can arrange for $\Phi$ to be fixed on $W_{\mathrm{sect}}$, giving commutativity. Then $W_f - \Phi(W_c)$ is a Weinstein cobordism without critical points, which induces a contact isotopy between $\del W_c$ and $\del W_f$.	
\end{proof}

Sectors being understood, our algorithm slightly modifies the one of \cite[Theorem 8.1]{acu_complements}:

\begin{proposition}\label{prop:lifting_with_boundary}
	Let $\Sigma_{g,b}$ denote a surface of genus $g$ with $b$ boundary components, and let $\mathcal{C}$ be a collection of immersed cooriented curves in $\Sigma_{g,b}$. Write $W_{\mathcal{C}}$ for the Weinstein manifold obtained by attaching $2$-handles along the Legendrian lifts of the curves in $\mathcal{C}$ to $\del_{\infty} T^{*}\Sigma_{g,b}$ and convex completing the sectorial boundary. One may obtain a Weinstein handlebody diagram for $W_{\mathcal{C}}$ as follows.

	\begin{enumerate}
		\item Draw $\mathcal{C}$ in a gluing polygon for $\Sigma_{g,b}$, trading coorientation for orientation. Observe that this gluing polygon has a subset of edges which are not identified with any of the other edges.
        
		\item Isotope the curves so that they traverse the polygon in the \emph{clockwise} direction. This may introduce bigons between curves, which can (and should) be resolved whenever the orientations of both strands are opposite.
        
		\item Identify the Legendrian lift of the boundary curves $\del \Sigma_{g,b}$; this amounts to drawing a diagram with the properties of \zcref{lem:boundary_diagram}.
        
		\item Observe that in the gluing polygon after the isotopy, for any curve $\gamma$, one of the following can always be arranged in a neighbourhood of $\gamma$ (c.f. \zcref{fig:local_curve_conf}):
		      \begin{enumerate}[label=(\alph*)]
			      \item $\gamma$ is parallel to a boundary component $\ell$;
			      \item $\gamma$ departs from a boundary curve $\ell$ where $\ell$ passes through a $1$-handle but $\gamma$ does not.
		      \end{enumerate}
		      Denote a boundary component of $\Sigma_{g,b}$ by $\ell_i$, and its lift by $L_i$.
		      \begin{enumerate}[label=(\alph*)]
			      \item Where $\gamma$ is parallel to $\ell_i$, its lift $\Gamma$ is a small positive Reeb pushoff of $L_i$; Diagrammatically, draw a small vertical translate of the handle representing $L_i$.
			      \item Where $\gamma$ does not pass through a $1$-handle, but $\ell_i$ does, the front of $\Gamma$ has a cusp instead of passing through the four-dimensional $1$-handle corresponding to the one $L_i$ passes through. Draw the cusp \emph{above} the top strand of $L_i$.
		      \end{enumerate}
		      For systems of multiple curves, their relative order is preserved (cf. \zcref{fig:local_curve_lifts}).
		\item Delete all $L_i$ from the diagram.
	\end{enumerate}

\end{proposition}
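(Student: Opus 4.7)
The plan is to reduce the sectorial problem to the closed-surface algorithm of \cite[Theorem 8.1]{acu_complements} via \zcref{lem:ideal_sector_boundary}. That lemma provides a contactomorphism between $\del_\infty T^*_c\Sigma_{g,b}$ and the subcritical ideal boundary $\del_\infty(T^*\Sigma_{g,0})_1$ of the cotangent bundle of the closed surface obtained by capping off the $b$ boundary circles. Under this contactomorphism, Legendrian lifts of cooriented curves in $\Sigma_{g,b}$ correspond to Legendrian lifts of the same curves, now regarded as sitting inside the closed surface $\Sigma_{g,0}$; attaching critical handles along those lifts yields $T^*\Sigma_{g,0}$ together with some extra $2$-handles, and a capping-off operation recovers the convex completion $W_{\mathcal{C}}$.

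Concretely, I would apply the closed-surface algorithm to the augmented curve system $\mathcal{C} \cup \{\ell_1,\dots,\ell_b\}$ with $\ell_i$ cooriented outward from $\Sigma_{g,b}$. Steps (1) and (2) of the proposition are then literally steps (1) and (2) of \cite[Theorem 8.1]{acu_complements} performed in a gluing polygon for $\Sigma_{g,0}$ adapted to the decomposition $\Sigma_{g,0}=\Sigma_{g,b}\cup D^2_1\cup\dots\cup D^2_b$, so that the edges lying in $\del\Sigma_{g,b}$ are exactly the unidentified edges of the polygon. Step (3) amounts to applying the lifting procedure to the $\ell_i$ alone; since each $\ell_i$ is a simple loop around a capped-off disk, inspection produces a Legendrian $L_i$ satisfying the structural properties catalogued in \zcref{lem:boundary_diagram}.

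For step (4), the key observation is that after the clockwise isotopy one may assume that near every $1$-handle crossing, any $\gamma\in\mathcal{C}$ is either (a) parallel to a nearby $\ell_i$ or (b) departs from an $\ell_i$ at a $1$-handle that $\ell_i$ traverses but $\gamma$ does not. The lifting rules in these two cases are then direct consequences of the closed-surface recipe: in case (a) the lift of $\gamma$ differs from $L_i$ only by a small Reeb translation, while in case (b) the absence of a $1$-handle crossing forces a front cusp, whose vertical placement (above the top strand of $L_i$) is forced by the outward-pointing coorientation of $\ell_i$ together with the convention $R=\del_z$ for the Reeb vector field. Step (5) is justified by noting that filling in the disks bounded by the $\ell_i$ is realized symplectically by attaching Weinstein $2$-handles along $L_1,\dots,L_b$; since this operation converts $W_{\mathcal{C}}$ into the closed-surface handlebody we constructed, erasing the $L_i$ diagrammatically inverts it.

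The main obstacle is step (4): one must argue that, by a further isotopy of $\mathcal{C}$ within the gluing polygon and a possibly finer subdivision of the polygon (e.g.\ by pushing the $1$-handle wormholes toward $\del\Sigma_{g,b}$), every curve in $\mathcal{C}$ can be put simultaneously in one of the forms (a) or (b) at \emph{every} $1$-handle crossing without creating new intersections among the curves themselves, and that the prescribed relative ordering of cusps and pushoffs corresponds to the correct Reeb order. This is a careful case analysis on the local patterns in which $\gamma$ and the $\ell_i$ can enter a $1$-handle, parallel to the corresponding analysis in the closed-surface proof of \cite{acu_complements} but with the additional bookkeeping imposed by the boundary strands.
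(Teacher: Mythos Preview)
Your proposal is correct and follows essentially the same route as the paper: reduce via \zcref{lem:ideal_sector_boundary} to the closed surface $\Sigma_{g,0}$, apply the algorithm of \cite[Theorem~8.1]{acu_complements} to $\mathcal{C}$ together with the boundary curves (whose lifts are provided by \zcref{lem:boundary_diagram}), and then justify deletion of the $L_i$. The only cosmetic difference is in Step~(5): the paper makes the deletion rigorous by reordering critical values via \cite[Lemma~12.20]{ce_stein_weinstein} so that the $L_i$-handles are attached last and then truncating below them, whereas you phrase the same fact as ``attaching the $L_i$ to $W_{\mathcal{C}}$ yields the closed-surface handlebody, hence erasing inverts''; these are equivalent once \zcref{lem:ideal_sector_boundary} identifies the subcritical parts.
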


\begin{figure}[htbp]
	\centering
	\def\svgwidth{0.7\textwidth}
	\import{inkscape_images}{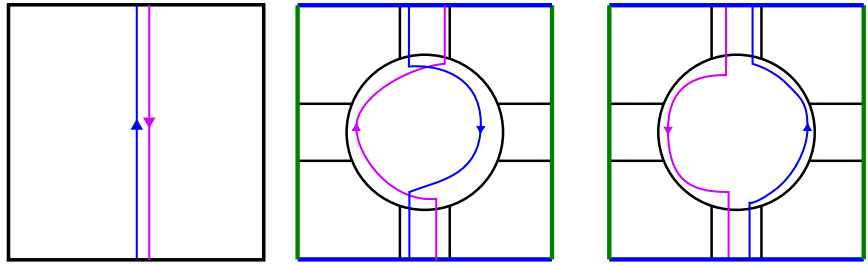}
	\caption{Local picture of curve configurations after performing the isotopies in Step 2.}
	\label{fig:local_curve_conf}
\end{figure}

\begin{figure}[htbp]
	\centering
	\def\svgwidth{0.4\textwidth}
	\import{inkscape_images}{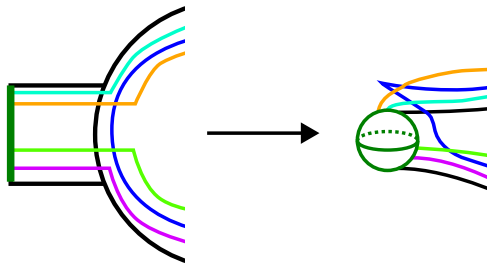}
	\caption{The order of curves to be lifted in terms of distance from the boundary is reflected in the order of the fronts of their Legendrian lifts.}
	\label{fig:local_curve_lifts}
\end{figure}

\begin{proof}
    Pick a Morse function $f: \Sigma_{g,0} \to \R$ which has $b$ index-$2$ critical points and observe that $\Sigma_{g,b} = f^{-1}((-\infty, d])$ for some regular value $d$ which lies between the index-$1$ and index-$2$ critical values of $f$. This Morse function induces the gluing polygon used in Step 1, and we use it as before to obtain a morsified Weinstein structure $\phi_f$ on $T^*\Sigma_{g,0}$. By \zcref{lem:ideal_sector_boundary}, it suffices to lift the curves to $\phi_f^{-1}((-\infty, d])$, which \cite[Theorem 8.1]{acu_complements} instructs us how to do.

    More precisely, observe that their algorithm establishes the prescriptions of Steps 1., 2., and 4. above, in the case of a perfect Morse function. One can adapt the procedure to a general Morse function as long as one knows how to draw the attaching links of the Weinstein $2$-handles induced by $\phi_f$; \zcref{lem:boundary_diagram} below identifies these as the Legendrian lifts of the boundary components of $\Sigma_{g,b}$ and provides their fronts. 

	Having performed Steps 1. to 4., one has a Legendrian front of the link consisting of the lifts of $\mathcal{C}$ together with the $b$ attaching links of the index-$2$ critical points of $\phi_f$, that is, one has a diagram representing the Weinstein manifold obtained by attaching handles along $\mathcal{C}$ to $T^*\Sigma_{g,0}$. Denote this Weinstein manifold by $(X, \phi_X)$. It remains to show that we may simply delete the $b$ attaching circles to obtain a handlebody diagram for $W_{\mathcal{C}}$.
	
	By \cite[Lemma 12.20]{ce_stein_weinstein}, we may perform a Weinstein homotopy on $(X, \phi_X)$ varying only $\phi_X$, leaving it invariant near the level set $\phi_X^{-1}(d) = \phi_f^{-1}(d)$, and arbitrarily reordering its critical values. Invariance near the chosen level set in particular implies that the same handlebody diagram describes this Weinstein structure. We choose to reorder them so that the critical values corresponding to the lifts of the attaching circles of $f$ are largest, and continue to denote the resulting function by $\phi_X$. Let $d_0$ denote the smallest of these critical values, and let $\eps>0$ be small enough so that $d' = d_0 - \eps$ is a regular value and such that $[d', d_0)$ contains no critical values.
	
	Consider the Liouville completion of $\phi_X^{-1}((-\infty, d'])$: This is the Weinstein manifold obtained by attaching $2$-handles along lifts in $C$ to $T^*\Sigma_{g,b}$, but not along the lifts of the boundary curves, that is, it is $W_\mathcal{C}$. By the fact that the Weinstein homotopy leaves the ideal boundary invariant and that we truncated the level set below the critical values coming from the attaching circles of $f$, we see that a handlebody diagram for $W_\mathcal{C}$ is obtained by just deleting the corresponding $2$-handles from the diagram, as claimed in Step 5.    
\end{proof}

\begin{remark}
	Observe that choosing not to delete the attaching circles of the Morse function $\phi_f$ results in a handlebody diagram for the Weinstein manifold obtained by attaching handles along the lifts of $\mathcal{C}$ to $T^*\Sigma_{g,0}$ which is equivalent to the one obtained in \cite[Theorem 8.1]{acu_complements}, though perhaps not obviously so. 
\end{remark}

We complete the proof by explaining how to draw Legendrian lifts of the boundary curves. First, we need a preliminary lemma stating that boundary lifts are independent up to Legendrian isotopy of the chosen coorientation.

\begin{lemma}\label{lem:coorientation_independence}
	Let $\Sigma$ be a surface with nonempty boundary and let $\gamma \subset \Sigma$ be a curve parametrizing a boundary component. Denote by $\Lambda_\pm$ the Legendrian lifts of $\gamma$ to $T^\infty\Sigma$ corresponding to either coorientation of $\gamma$.

	Then $\Lambda_+$ is Legendrian isotopic to $\Lambda_-$ in $\del_\infty T^*_c\Sigma$.
\end{lemma}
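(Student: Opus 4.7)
By \zcref{lem:ideal_sector_boundary}, it suffices to exhibit a Legendrian isotopy between $\Lambda_+$ and $\Lambda_-$ inside $\del_\infty(T^*\Sigma_{g,0})_1$, where $\Sigma_{g,0}$ is the closed surface obtained by filling in the boundary components of $\Sigma_{g,b}$ via the Morse function $f$ from \zcref{lem:ideal_sector_boundary}. Under this identification, $\gamma$ bounds a disk $D$ in $\Sigma_{g,0}$ surrounding an index-$2$ critical point $p$ of $f$. The role of the convex completion is precisely to render $\gamma$ null-homotopic: in the original sector $T^*\Sigma_{g,b}$ the curve $\gamma$ is an essential boundary component, and the lemma would be false.

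The isotopy is then constructed in two stages. First, shrink $\gamma$ through concentric cooriented circles in $D$ to an arbitrarily small circle $\gamma_\epsilon$ around $p$; this is a smooth isotopy of cooriented curves in $\Sigma_{g,0}$ and induces Legendrian isotopies $\Lambda_\pm(\gamma) \simeq \Lambda_\pm(\gamma_\epsilon)$ in $T^\infty \Sigma_{g,0}$.

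Second, inside a standard Darboux neighbourhood of the Legendrian fiber $T^\infty_p\Sigma_{g,0}$, I construct an explicit Legendrian isotopy from $\Lambda_+(\gamma_\epsilon)$ to $\Lambda_-(\gamma_\epsilon)$ that passes through this fiber. In local coordinates $(x,y,\phi)$ on $T^\infty\R^2 \cong \R^2_{x,y}\times S^1_\phi$, identifying unit covectors via $p=(\cos\phi,\sin\phi)$ so that the contact form is $\alpha = \cos\phi\,dx + \sin\phi\,dy$, define
\[
    \Lambda_s(\phi) = \bigl(\epsilon(1-2s)\cos\phi,\; \epsilon(1-2s)\sin\phi,\; \phi\bigr),\qquad s\in[0,1].
\]
A direct computation gives $\alpha(\del_\phi\Lambda_s)=0$ for every $s$, so each $\Lambda_s$ is Legendrian; for $s\neq 1/2$ it projects to a round circle of radius $\epsilon\abs{1-2s}$ in $\R^2$, and at $s=1/2$ it degenerates smoothly to the fiber $\{x=y=0\}$, which is itself a closed Legendrian. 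One verifies $\Lambda_0 = \Lambda_+(\gamma_\epsilon)$ and $\Lambda_1=\Lambda_-(\gamma_\epsilon)$, completing the argument.

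If there is an obstacle, it is merely the identification of the local model near the fiber $T^\infty_p\Sigma_{g,0}$ with the $(x,y,\phi)$ Darboux chart above; this is standard, following from a choice of Riemannian metric on a disk neighbourhood of $p$ together with Weinstein's neighbourhood theorem for closed Legendrian submanifolds.
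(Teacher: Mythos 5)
Your argument has a genuine gap in the identification of the ambient contact manifold. You correctly invoke \zcref{lem:ideal_sector_boundary} to reduce the problem to $\del_\infty(T^*\Sigma_{g,0})_1$, the ideal boundary of the \emph{subcritical part} of $T^*\Sigma_{g,0}$. But you then construct your isotopy inside $T^\infty\Sigma_{g,0}$, the unit cotangent bundle of the closed surface, which is a different contact manifold: the two differ by $b$ contact $(-1)$-surgeries along the attaching links of the index-$2$ Weinstein handles. (For instance, when $\Sigma_{g,b} = \D^2$ and $\Sigma_{g,0} = S^2$, the subcritical boundary is $S^3$, whereas $T^\infty S^2 \cong \R\mathbb{P}^3$.) In the subcritical level set $\phi_f^{-1}(d')$ with $d'$ strictly between the index-$1$ and index-$2$ critical values, the fibre over an index-$2$ critical point $p$ is simply absent: one needs $\tfrac12\norm{\xi}^2 = d' - f(q) \geq 0$, which fails for $q$ near $p$. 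So both stages of your isotopy break down --- the Legendrian lift of $\gamma_\epsilon$ ceases to exist inside the correct level set once $\gamma_\epsilon$ is small, and the Darboux-chart model around $T^\infty_p\Sigma_{g,0}$ is not part of the ambient manifold at all (that fibre is the belt sphere of the $2$-handle, which only appears after the surgery).

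The explicit Legendrian family you wrote down, $\Lambda_s(\phi) = (\epsilon(1-2s)\cos\phi,\ \epsilon(1-2s)\sin\phi,\ \phi)$, is in itself a perfectly good isotopy in $T^\infty\R^2$, and your observation that the convex completion makes $\gamma$ ``bound something'' is the right intuition. But the way the paper realizes that intuition is local to a collar of the boundary component and to the convex piece glued there: in collar coordinates $(\theta, p, r, \xi)$ the convex completion enlarges the base interval $[0,\eps)$ to $(-\eps, \eps)$, turning the half-sphere $S^2_+$ factor of the ideal boundary into a full $S^2$, and the isotopy slides the biconormal lift through the new $r<0$ region with $\xi$ flipping sign. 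Nowhere does the argument leave the collar or reference the closed surface or its cotangent fibres; that is what makes it land in the correct contact manifold.
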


\begin{proof}
	Without loss of generality, we may assume that $\Sigma$ has a single circular boundary component and consider the product decomposition 
	\[
	\mathcal{N}(\del T^*\Sigma) \cong T^*S^1 \times T^*[0, \eps)
	\]
	induced by taking the cotangent bundle of a collar neighbourhood of $\del \Sigma$. Denoting coordinates by $(\theta, p, r, \xi) \in T^*S^1 \times T^*[0, \eps)$, the standard Liouville form on $\mathcal{N}(\del T^*\Sigma)$ is $\lambda_{\mathrm{can}} = p \dd \theta + \xi \dd r$. Forming the convex completion corresponds, by definition, to gluing the piece $T^*S^1 \times \C_{\mathrm{Re}\leq 0}$, carrying the standard form $\lambda_\C = p\dd \theta + \frac{1}{2}(\xi \dd r - r \dd \xi)$. Observe that the forms agree up to the form $\frac{1}{2}\dd(r\xi)$.
	
	Before we glue, we deform the Liouville structure on $T^*S^1 \times T^*[0, \eps)$ in such a way that we get the Weinstein Morse function
	\[
	\phi: \mathcal{N}(\del T^*\Sigma) \to \R, \qquad (\theta, p, r, \xi) \mapsto \frac{1}{2}(p^2 + \xi^2) -r,
	\]
	which is just the morsification of the standard Morse-Bott Weinstein function, using the Morse function $f = -r$ on $\del\Sigma \times [0, \eps) \subset \Sigma$. Denote this structure by $(\mathcal{N}(\del T^*\Sigma), \lambda_f, Z_f)$. In fact, we have $\lambda_f = \lambda_{\mathrm{can}} + \dd \xi$ according to \cite[Section 7.1]{acu_complements}; observe that 
	\[
	\lambda_{\mathrm{can}} - \lambda_\C =  \dd(\frac{r\xi}{2} + \xi).
	\]
	Pick a smooth function $\chi: \R \to \R$ and a small $\delta \in (0, \eps)$ such that
	\[
	\chi(r) \equiv
		\begin{cases}
			0, & \text{if } r < -\delta,\\
			1, & \text{if } r > \delta,
		\end{cases}
	\]
	and such that $\delta$ is strictly increasing on $[-\delta, \delta]$. Then in a neighbourhood of the sectorial boundary, the convex completion is
	\[
	T^*_c\Sigma \cong T^*S^1 \times (\C_{\mathrm{Re \leq \delta}} \cup T^*[0, \eps)),
	\]
	glued using the obvious identification, with primitive 
	\[
		\lambda := \lambda_\C +  \dd(\chi(r) \cdot \underbrace{(\frac{r\xi}{2} + \xi)}_{=:g} ).
	\]
	For $r < -\delta$, one has $\lambda = \lambda_\C$, and $\lambda = \lambda_f$ where $r > \delta$. To establish that $\phi$ is a valid Weinstein Morse function, we must show that $\phi$ is gradientlike for the resulting Liouville vector field. It is clear that $\phi$ is gradientlike for $Z_f$, which is the Liouville vector field where $r > \delta$. For $r< - \delta$, one has the Liouville vector field $Z_\C = p \del_p + \frac{1}{2}(\xi \del \xi + r \del_r)$, so that
	\[
	\dd \phi(Z_\C) = p^2 + \frac{1}{2}\xi^2 - \frac{r}{2} > 0, \quad \text{since } r < -\delta.
	\]
	The only locus where $\dd\phi(Z_{\C})$ could vanish is at $\xi = p = r = 0$. It remains to check on the overlap. There, the Liouville vector field is
	\[
	Z = Z_{\C} + X_{\chi g},
	\]
	where $X_{\chi g}$ denotes the hamiltonian vector field. One explicitly computes it to be
	\[
		X_{\chi g} = -\chi(1+\frac{r}{2})\del_r + \xi(\frac{r}{2}\dot{\chi} + \dot{\chi} + \frac{1}{2}\chi)\del_\xi,
	\]
	so that

	\[
		\dd \phi(X_{\chi_g}) = \chi(1+\frac{r}{2}) + \xi^2(\dot{\chi}(1+\frac{r}{2}) + \frac{1}{2}\chi).
	\]
	For $r \in [-\delta, \delta]$ and $\delta$ small enough, the above is positive (since both $\chi$ and $\dot{\chi}$ are positive in this region by construction). It follows that
	\[
	\dd \phi(Z) = \dd \phi(Z_{\C}) + \dd \phi(X_{\chi g}) > 0
	\]
	also for $r \in [-\delta, \delta]$, and thus $\phi$ is gradientlike for $Z$.

	We are now in the position to determine the Legendrian lifts of a boundary curve to the ideal boundary $\del_\infty T^*_c \Sigma \cong \phi^{-1}(c)$, which may be parametrized as $\gamma(t) = (t, 0) \in S^1 \times [0, \eps) \subset \Sigma$. We have $\dot{\gamma} = \del_\theta,$ so by definition, the biconormal lift is
	\begin{align*}	
		\Lambda_\pm  &= \{(\theta, p, r, \xi) \in \phi^{-1}(c) \mid (\theta, r) \in \gamma, \, (p \dd \theta + \xi \dd r)(\del_\theta) = 0 \}\\
						&= \{(\theta, 0, 0, \pm \sqrt{2c} ) \in T^*S^1 \times T^*[0, \eps)\}.
	\end{align*}
	Define the family
	\[
		\Lambda_s(t) := (t, 0, c(s^2-1), s \sqrt{2c}), \qquad s \in [-1, 1].
	\]
	One checks that $\Lambda_s(t) \in \phi^{-1}(c)$ for all $s$ and $t$, and that $\Lambda_1 = \Lambda_+$ and $\Lambda_{-1} = \Lambda_-$. Observe that $\Lambda_s$ acquires a negative $r$-coordinate for $s \in (-1,1)$, so the isotopy necessarily enters the region added by forming the convex completion.

	It remains to check that each $\Lambda_s$ defines a Legendrian submanifold. Observe that again, $\dot{\Lambda}_s = \del_\theta$ for each $s$. The contact form on $\phi^{-1}(c)$ is the restriction of $\lambda$, and one checks that $\lambda(\del_\theta) = p$; thus evidently, each $\Lambda_s$ is a Legendrian submanifold since the $p$-coordinate vanishes. 
\end{proof}

We finally explain how to obtain the lifts of the boundary curves.

\begin{lemma}\label{lem:boundary_diagram}
	Let $\Sigma_{g,b}$ denote the surface of genus $g$ with $b$ boundary components. Write $\mathcal{B}$ for the collection of the $b$ boundary curves of $\Sigma_{g,b}$, endowed with \textbf{any} coorientation, and let $\Lambda$ be the Legendrian lift of $\mathcal{B}$ to $\del_\infty T^*_c \Sigma_{g,b}$.

	One can obtain a front projection of $\Lambda$ in Gompf normal form as follows:
	\begin{enumerate}
		\item Draw $\mathrm{rk}(H_1(\Sigma_{g,b})) = 2g+b-1$ pairs of $1$-handle attaching spheres.
		\item Draw a Legendrian $tb$ $-1$ unknot representing the boundary of the zero-handle of $\Sigma_{g,b}$, and mark on it the relative positions of the attaching spheres of the (two-dimensional) $1$-handles of $\Sigma_{g,b}$ (see \zcref{fig:cotangent_torus} and \zcref{fig:cotangent_pants}).
		\item Perform Legendrian isotopies on the unknot to isotope the (zero-dimensional) attaching spheres drawn in the last step to cusps in the centres of the attaching regions of the (three-dimensional) $1$-handles.
		\item Exchange the cusps for a pair of strands passing over the $1$-handles.
	\end{enumerate}
\end{lemma}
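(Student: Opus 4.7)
The plan is to reduce to the closed-surface algorithm of \cite[Theorem 8.1]{acu_complements} by exhibiting the boundary curves of $\Sigma_{g,b}$ as the attaching circles of the $2$-handles in a Morse decomposition of $\Sigma_{g,0}$. By \zcref{lem:coorientation_independence}, the Legendrian $\Lambda$ is independent of the choice of coorientation on $\mathcal{B}$, so we may fix any one.

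Following the Morse-theoretic setup from the proof of \zcref{prop:lifting_with_boundary}, I would pick a self-indexing Morse function $f \colon \Sigma_{g,0} \to \R$ with one index-$0$, $2g+b-1$ index-$1$, and $b$ index-$2$ critical points, arranged so that $\Sigma_{g,b} = f^{-1}((-\infty, d])$ for a regular value $d$ between the index-$1$ and index-$2$ critical values. By \zcref{lem:ideal_sector_boundary}, the ideal boundary $\del_\infty T^*_c \Sigma_{g,b}$ is contact isotopic to the subcritical ideal boundary $\del_\infty (T^* \Sigma_{g,0})_1$ of the morsified Weinstein structure induced by $f$, and under this identification the $b$ curves of $\mathcal{B}$ are exactly the attaching circles of the $b$ index-$2$ handles of $f$ in $\Sigma_{g,0}$.

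I would then apply \cite[Theorem 8.1]{acu_complements} to the closed surface $\Sigma_{g,0}$ and the curves $\mathcal{B}$, using the gluing polygon induced by $f$: a single disk (the $0$-handle) whose boundary is divided into $2(2g+b-1)$ arcs identified in pairs by the $1$-handles, together with $b$ free arcs that assemble into $\mathcal{B}$. This identifies the four steps of the procedure with the data produced by the algorithm. Step $1$ draws the $2g+b-1$ three-dimensional $1$-handles corresponding to the index-$1$ critical points of $f$; Step $2$ draws the Legendrian lift of the boundary of the $0$-handle, which is a $tb=-1$ unknot by the construction in \cite[Section 4]{acu_complements}, together with the $2(2g+b-1)$ marked points where the two-dimensional $1$-handles attach; Steps $3$ and $4$ implement the acu\_complements recipe for lifts of curves crossing a $1$-handle of the surface, resolving each marked point into a cusp and then into a pair of strands through the corresponding three-dimensional $1$-handle.

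The main obstacle I anticipate is justifying that the algorithm of \cite[Theorem 8.1]{acu_complements}, originally formulated for perfect Morse functions, extends to the non-perfect setup above. I would address this as in the proof of \zcref{prop:lifting_with_boundary}: by \cite[Lemma 12.20]{ce_stein_weinstein} one can rearrange critical values so that all index-$2$ critical values come last, and the algorithm of \cite[Theorem 8.1]{acu_complements} naturally accommodates lifting a collection of attaching circles for several $2$-handles rather than a single one. A minor bookkeeping check ensures that the acu\_complements convention for the vertical placement of cusps matches the prescription in Step $4$ that the cusps lie above the top strand of the unknot representing the boundary of the $0$-handle.
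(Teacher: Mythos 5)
Your reduction of the lemma to the closed-surface picture via \zcref{lem:ideal_sector_boundary} and \zcref{lem:coorientation_independence} is sound, and your identification of $\mathcal{B}$ with the attaching circles of the index-$2$ handles of $f$ is exactly the first step of the paper's argument. However, the core of your proof --- invoking \cite[Theorem 8.1]{acu_complements} for the gluing polygon induced by the non-perfect Morse function $f$ --- is circular. That theorem is established only for a perfect Morse function on the closed surface, as the paper itself acknowledges in the proof of \zcref{prop:lifting_with_boundary}, and extending it to a Morse function with $b$ index-$2$ critical points requires knowing the Legendrian fronts of the attaching links of the extra $2$-handles. But those attaching links are precisely the lifts of the boundary curves, so this extension \emph{is} \zcref{lem:boundary_diagram}. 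You cannot use the conclusion of the lemma (a working lifting algorithm for the non-perfect setup) to prove the lemma.

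Your proposed fix for the ``main obstacle'' does not address this. The reordering of critical values via \cite[Lemma 12.20]{ce_stein_weinstein} is used in the proof of \zcref{prop:lifting_with_boundary} to justify \emph{deleting} the index-$2$ attaching circles after the handlebody diagram has already been drawn; it does not tell you what those attaching circles look like in the front projection, which is the missing ingredient. The assertion that the algorithm ``naturally accommodates lifting a collection of attaching circles for several $2$-handles'' is an assumption, not an argument.

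The paper circumvents the circularity by working with the skeleton of the morsified Weinstein structure directly, following \cite[Theorem 7.3]{acu_complements} rather than Theorem 8.1. Concretely, it sets $L_i = \mathrm{Skel}(T^*\Sigma_{g,0}, Z_f, \phi_f) \cap \phi_f^{-1}(c_i)$ for level sets $c_0 < c_1$ separating the index-$0$, $1$, and $2$ critical values, shows that $\Lambda$ is Legendrian isotopic to $L_1$, establishes that $L_0$ is a max-$tb$ unknot (the boundary of the unknotted Lagrangian disk $D_0 = \mathrm{Skel} \cap W_0$), keeps track of how the stratification of $D_0$ by stable manifolds of index-$1$ critical points places the $0$-surgery centres on $L_0$, and then observes that flowing from $Y_0$ to $Y_1$ over the index-$1$ critical points effects contact $0$-surgeries that produce $L_1$. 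This direct Morse-theoretic analysis is what actually justifies Steps 1 through 4, and it is the content you would need to supply in place of the circular citation.
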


\begin{figure}[htbp]
	\centering
	\def\svgwidth{\textwidth}
	\import{inkscape_images}{torus.pdf_tex}
	
	\caption{Applying \zcref{lem:boundary_diagram} to recover the Gompf handlebody diagram for $T^*T^2$.}
	\label{fig:cotangent_torus}
\end{figure}

\begin{figure}[htbp]
	\centering
	\def\svgwidth{\textwidth}
	\import{inkscape_images}{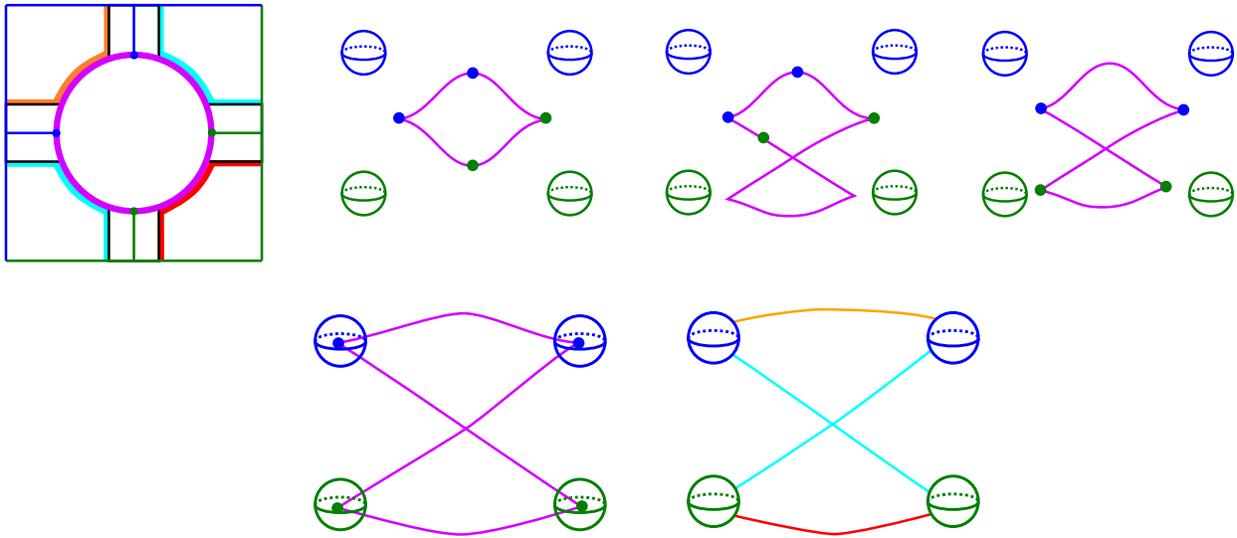}
	\caption{Applying \zcref{lem:boundary_diagram} with a Morse function on $S^2$ with three index-$2$ critical points to obtain the Legendrian boundary lifts of the pair of pants $P$ to $T^\infty P$.}
	\label{fig:cotangent_pants}
\end{figure}

\begin{proof}
	We consider again the morsified Weinstein structure  $(T^*\Sigma_{g,0}, Z_f, \phi_f)$  coming from a self-indexing Morse function $f: \Sigma_{g,0} \to \R$ with a unique index-$0$ critical point, $2g+b-1$ index-$1$ critical points, and $b$ index-$2$ critical points. 

	Choose regular values $c_0$ and $c_1$ lying inbetween the critical values of $\phi_f$ of index $0$ and $1$, and $1$ and $2$, respectively. We model $\Sigma_{g,b}$ as $f^{-1}((-\infty, c_1])$. Set $W_i = \phi_f^{-1}((-\infty, c_i])$, and $Y_i = \del W_i = \phi_f^{-1}(c_i)$, and endow the $Y_i$ with the contact structure induced by the Liouville form. Our first claim is that $\Lambda$ coincides with 
	\[
	L_1 := \mathrm{Skel}(W,Z_f, \phi_f) \cap Y_1,
	\]
	the attaching link of the $2$-cells of $\phi_f$, up to Legendrian isotopy. To see this, first observe that by \zcref{lem:ideal_sector_boundary}, $L_1$ lives in the same space as $\Lambda$. We next prove that the projection of $L_1$ to $\Sigma_{g,b}$ is $\del \Sigma_{g,b}$ (strictly speaking, $L_1$ is already a subset of the zero section since it is a subset of the skeleton).
	
	Recall that the skeleton is the union of the stable manifolds $W^s_{Z_f}(x_0)$ for $x_0 \in \mathrm{Crit}(\phi)$. Since $Z_f\vert_{\Sigma_{g,0}} = \nabla f$, it follows that the projection of $L_1$ to $\Sigma_{g,0}$ consists of the set 
	\[
		\bigcup_{x_0 \in \mathrm{Crit}_2(f)}\{ x \in \Sigma_{g,0} \mid \varphi_{\nabla f}^t(x) \xrightarrow{t \to \infty} x_0, \, f(x) = c_1  \}.
	\]

	This union is over index-$2$ critical points, and therefore, this is a union of circles coinciding with the set $f^{-1}(c_1) = \del \Sigma_{g,b}$. By \zcref{lem:coorientation_independence}, the lift $\Lambda$ does not depend on the coorientation of the base curve up to Legendrian isotopy, establishing the first claim.
	
	It thus suffices to describe $L_1$. Following the proof of \cite[Theorem 7.3]{acu_complements}, we describe $L_1$ in terms of 
	\[
		L_0 := \mathrm{Skel}(T^*\Sigma_{g,0}, Z_f, \phi_f) \cap Y_0.
	\]

	The restriction of $\phi_f$ to $\mathrm{Skel}(T^*\Sigma_{g,0}, Z_f, \phi_f) \cap W_0 =: D_0$ is just $f$, which has a unique index-$0$ critical point. It follows that $D_0$ is an unknotted Lagrangian disk, and its boundary $L_0$ is thus an unknot of maximal Thurston-Bennequin number. $D_0$ inherits a stratification by the stable manifolds of $\phi_f$ (or $f = \phi_f\vert_{\mathrm{Skel}(T^*\Sigma_{g,b}, Z_f, \phi_f)})$, which consists of two $1$-dimensional strata limiting to the index-$0$ critical point for each index-$1$ critical point, and their complement forms the $2$-dimensional stratum given by the stable manifold of the index-$2$ critical point. These strata intersect $L_0$ in two points for each $1$-critical point, and their complement is the intersection with the stable manifolds of the index-$2$ critical points.

	It follows that the unknot $L_0$ is such that each intersection point with an index-$1$ stable manifold lies in the center of a $3$-ball representing part of the attaching region of the corresponding $1$-handle. Inside this attaching region, $L_0$ forms an unknotted Legendrian arc, which can be drawn in the front projection as a standard cusp inside the attaching regions.
	
	Flowing $L_0$ along $Z_f$ over an index-$1$ critical point $x_0$ has the effect of performing a contact $0$-surgery along the $0$-sphere $L_0 \cap W^s_{Z_f}(x_0)$, which results in the pair of Legendrian arcs of $L_0$ in the attaching region being replaced by the boundary of the core of the resulting $1$-handle. Observe that this may increase the number of connected components of the Legendrian.

	Flowing over all index-$1$ critical points to the level set $Y_1$ yields $L_1$; we infer that $L_0$ differs from $L_1$ precisely by this set of $0$-surgeries. See \zcref{fig:cotangent_torus} and \zcref{fig:cotangent_pants} for the corresponding effect in the front.

	Therefore, it suffices to understand $L_0$; however, we have already established that $L_0$ is a max-$tb$ unknot such that the centers for the $0$-surgeries on the ambient $Y_0$ match up with the intersection points of $L_0$ with their corresponding stable manifolds. 

	It thus suffices to perform an isotopy of the standard max-$tb$ unknot realizing this matching to give a front projection of $L_0$, and performing the $0$-surgery to obtain a front of $L_1$ has the effect stated in Step 4. 
\end{proof}

\begin{remark}
	There are many ways to isotope a Legendrian unknot in accordance with the conditions from Steps 1-3, giving rise to ostensibly different presentations of the boundary lifts. In this way, \zcref{lem:boundary_diagram} can be used to construct equivalent (but non-trivially so) handlebody diagrams for cotangent bundles of closed surfaces. We thank Emmy Murphy for helpful correspondence on this point.
\end{remark}

\subsection{Attaching along the Stokes Legendrian}

In this subsection, we prove \zcref{thm:stokes_is_generic_monodromy} by applying \zcref{prop:lifting_with_boundary} to all the diagrams in \zcref{fig:stokes_legendrians}, and performing some handlebody calculus moves to match the pictures with those from \zcref{fig:main_results}. Observe that, for PI and PII, the once-punctured sphere is just $\R^2$, and thus the fronts of their Stokes Legendrians can be drawn as a front in standard $\R^3$ via the contactomorphism $J^1S^1 \cong T^\infty \R^2$, which amounts to un-folding and closing the braid (see \zcref{fig:p1_p2_matching}):

\begin{figure}[htbp]
	\centering
	\def\svgwidth{\textwidth}
	%% Creator: Inkscape 1.4.2 (f4327f4, 2025-05-13), www.inkscape.org
%% PDF/EPS/PS + LaTeX output extension by Johan Engelen, 2010
%% Accompanies image file 'stokes_easy.pdf' (pdf, eps, ps)
%%
%% To include the image in your LaTeX document, write
%%   \input{<filename>.pdf_tex}
%%  instead of
%%   \includegraphics{<filename>.pdf}
%% To scale the image, write
%%   \def\svgwidth{<desired width>}
%%   \input{<filename>.pdf_tex}
%%  instead of
%%   \includegraphics[width=<desired width>]{<filename>.pdf}
%%
%% Images with a different path to the parent latex file can
%% be accessed with the `import' package (which may need to be
%% installed) using
%%   \usepackage{import}
%% in the preamble, and then including the image with
%%   \import{<path to file>}{<filename>.pdf_tex}
%% Alternatively, one can specify
%%   \graphicspath{{<path to file>/}}
%% 
%% For more information, please see info/svg-inkscape on CTAN:
%%   http://tug.ctan.org/tex-archive/info/svg-inkscape
%%
\begingroup%
  \makeatletter%
  \providecommand\color[2][]{%
    \errmessage{(Inkscape) Color is used for the text in Inkscape, but the package 'color.sty' is not loaded}%
    \renewcommand\color[2][]{}%
  }%
  \providecommand\transparent[1]{%
    \errmessage{(Inkscape) Transparency is used (non-zero) for the text in Inkscape, but the package 'transparent.sty' is not loaded}%
    \renewcommand\transparent[1]{}%
  }%
  \providecommand\rotatebox[2]{#2}%
  \newcommand*\fsize{\dimexpr\f@size pt\relax}%
  \newcommand*\lineheight[1]{\fontsize{\fsize}{#1\fsize}\selectfont}%
  \ifx\svgwidth\undefined%
    \setlength{\unitlength}{535.93795656bp}%
    \ifx\svgscale\undefined%
      \relax%
    \else%
      \setlength{\unitlength}{\unitlength * \real{\svgscale}}%
    \fi%
  \else%
    \setlength{\unitlength}{\svgwidth}%
  \fi%
  \global\let\svgwidth\undefined%
  \global\let\svgscale\undefined%
  \makeatother%
  \begin{picture}(1,0.30688591)%
    \lineheight{1}%
    \setlength\tabcolsep{0pt}%
    \put(0,0){\includegraphics[width=\unitlength,page=1]{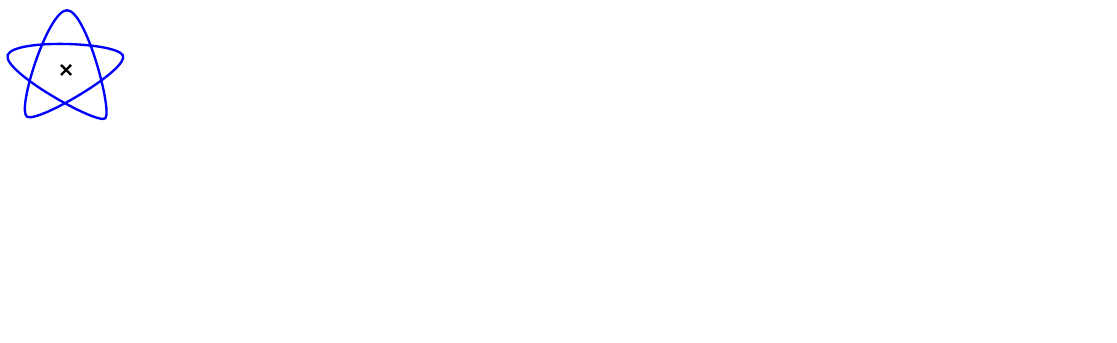}}%
    \put(0.05018424,0.17846416){\color[rgb]{0,0,0}\makebox(0,0)[lt]{\lineheight{1.25}\smash{\begin{tabular}[t]{l}PI\end{tabular}}}}%
    \put(0.03561271,0.0160008){\color[rgb]{0,0,0}\makebox(0,0)[lt]{\lineheight{1.25}\smash{\begin{tabular}[t]{l}$T^\infty\mathbb{R}^2$\end{tabular}}}}%
    \put(0,0){\includegraphics[width=\unitlength,page=2]{stokes_easy.pdf}}%
    \put(0.04654437,0.05680719){\color[rgb]{0,0,0}\makebox(0,0)[lt]{\lineheight{1.25}\smash{\begin{tabular}[t]{l}PII\\\end{tabular}}}}%
    \put(0,0){\includegraphics[width=\unitlength,page=3]{stokes_easy.pdf}}%
    \put(0.29999375,0.0160008){\color[rgb]{0,0,0}\makebox(0,0)[lt]{\lineheight{1.25}\smash{\begin{tabular}[t]{l}$J^1S^1$\end{tabular}}}}%
    \put(0,0){\includegraphics[width=\unitlength,page=4]{stokes_easy.pdf}}%
    \put(0.61575627,0.0160008){\color[rgb]{0,0,0}\makebox(0,0)[lt]{\lineheight{1.25}\smash{\begin{tabular}[t]{l}$\mathbb{R}^3$\end{tabular}}}}%
    \put(0,0){\includegraphics[width=\unitlength,page=5]{stokes_easy.pdf}}%
  \end{picture}%
\endgroup%

	\caption{Representing the Stokes Legendrians of PI and PII as fronts in contact $\R^3$. Observe that the expression for $PII$ does not match the one given in \zcref{fig:main_results}, but both are evidently the $(2,4)$-torus link.}
	\label{fig:p1_p2_matching}
\end{figure}

The case of PVI can also be treated by a simplified argument. Observe that the Stokes Legendrian of PVI consists of a doubled copy of the boundary curves. Attaching along the Legendrian link formed by the components projecting to the \emph{inner} circle in \zcref{fig:stokes_legendrians} corresponds to re-attaching the $2$-handles below the critical values of which we cut off the Morse function in the proof of \zcref{prop:lifting_with_boundary}, and thus the diagram is  equivalent to attaching along the Legendrian lifts of four disjoint circles on $S^2$, coriented outwards. Moreover, since these circles are all contractible and do not intersect, they are seen to be cotangent fibres perturbed under the Reeb flow; flowing in the opposite direction contracts each circle to a point and re-expands them with the opposite coorientation. This is cosmetic, but makes the resulting orientations fit squarely into our conventions. \zcref{fig:stokes_p6} carries out the simplifications.

\begin{figure}[h]
	\centering
	\def\svgwidth{\textwidth}
	\import{inkscape_images}{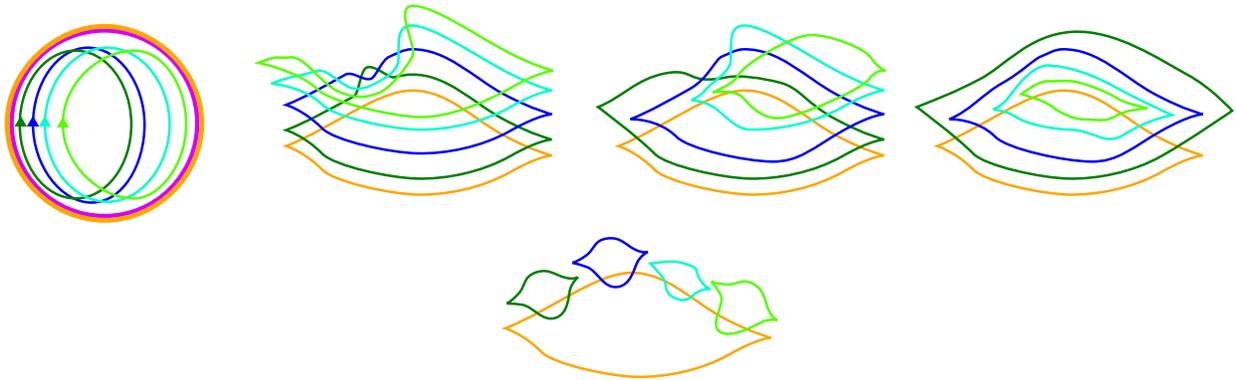}
	
	\caption{Applying \cite[Theorem 8.1]{acu_complements} to four unlinked circles with inward coorientation in $S^2$. Observe that the bigons in the first diagram do not result in a linking of the lift since the coorientations are opposite. The orange and purple circles denote $L_1$ and $L_0$ in the proof of \zcref{lem:boundary_diagram}.}
	\label{fig:stokes_p6}
\end{figure}

\begin{figure}[htbp]
	\centering
	\def\svgwidth{\textwidth}
    \import{inkscape_images}{cylinder.pdf_tex}	
	\caption{Applying \zcref{lem:boundary_diagram} to obtain fronts of the lifts of the boundary circles of the cylinder.}
	\label{fig:cyl_boundary}
\end{figure}

We proceed to go through all the Stokes diagrams that have two punctures.

\begin{minipage}{\textwidth}
\addtocontents{toc}{\SkipTocEntry}
\subsection*{PII(FN)}
% \begin{figure}[htbp]
	\centering
	\def\svgwidth{\textwidth}
	%% Creator: Inkscape 1.4.2 (ebf0e940d0, 2025-05-08), www.inkscape.org
%% PDF/EPS/PS + LaTeX output extension by Johan Engelen, 2010
%% Accompanies image file 'stokes_p2fn.pdf' (pdf, eps, ps)
%%
%% To include the image in your LaTeX document, write
%%   \input{<filename>.pdf_tex}
%%  instead of
%%   \includegraphics{<filename>.pdf}
%% To scale the image, write
%%   \def\svgwidth{<desired width>}
%%   \input{<filename>.pdf_tex}
%%  instead of
%%   \includegraphics[width=<desired width>]{<filename>.pdf}
%%
%% Images with a different path to the parent latex file can
%% be accessed with the `import' package (which may need to be
%% installed) using
%%   \usepackage{import}
%% in the preamble, and then including the image with
%%   \import{<path to file>}{<filename>.pdf_tex}
%% Alternatively, one can specify
%%   \graphicspath{{<path to file>/}}
%% 
%% For more information, please see info/svg-inkscape on CTAN:
%%   http://tug.ctan.org/tex-archive/info/svg-inkscape
%%
\begingroup%
  \makeatletter%
  \providecommand\color[2][]{%
    \errmessage{(Inkscape) Color is used for the text in Inkscape, but the package 'color.sty' is not loaded}%
    \renewcommand\color[2][]{}%
  }%
  \providecommand\transparent[1]{%
    \errmessage{(Inkscape) Transparency is used (non-zero) for the text in Inkscape, but the package 'transparent.sty' is not loaded}%
    \renewcommand\transparent[1]{}%
  }%
  \providecommand\rotatebox[2]{#2}%
  \newcommand*\fsize{\dimexpr\f@size pt\relax}%
  \newcommand*\lineheight[1]{\fontsize{\fsize}{#1\fsize}\selectfont}%
  \ifx\svgwidth\undefined%
    \setlength{\unitlength}{501.89954382bp}%
    \ifx\svgscale\undefined%
      \relax%
    \else%
      \setlength{\unitlength}{\unitlength * \real{\svgscale}}%
    \fi%
  \else%
    \setlength{\unitlength}{\svgwidth}%
  \fi%
  \global\let\svgwidth\undefined%
  \global\let\svgscale\undefined%
  \makeatother%
  \begin{picture}(1,0.59888511)%
    \lineheight{1}%
    \setlength\tabcolsep{0pt}%
    \put(0,0){\includegraphics[width=\unitlength,page=1]{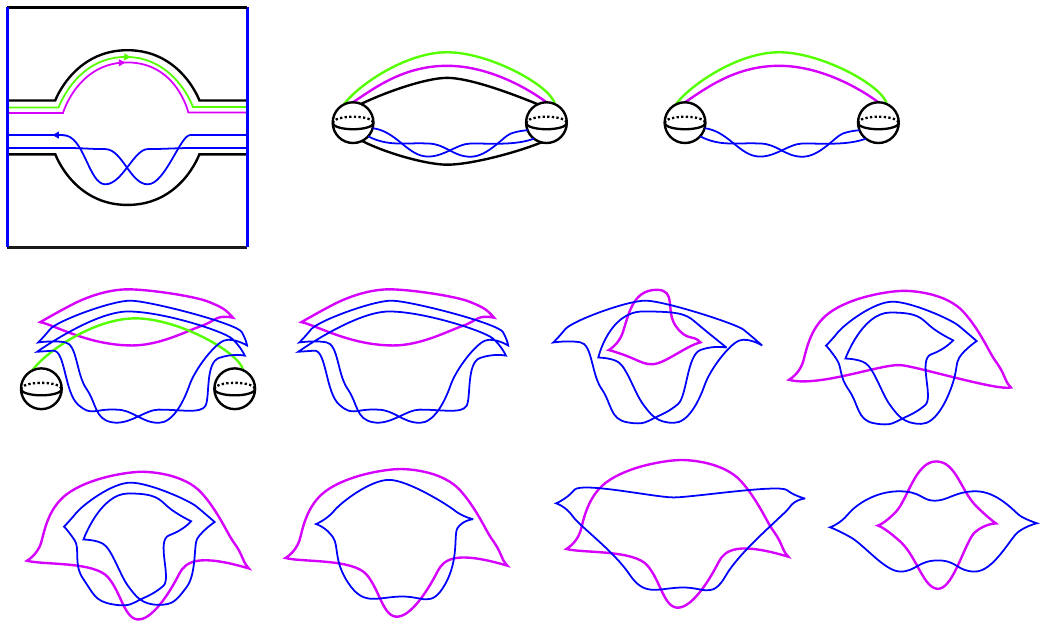}}%
  \end{picture}%
\endgroup%

	\captionof{figure}{Application of \zcref{prop:lifting_with_boundary} and subsequent simplifications of PII(FN).}
	\label{fig:stokes_p2fn}
% \end{figure}
\end{minipage}

\begin{minipage}{\textwidth}
\addtocontents{toc}{\SkipTocEntry}
\subsection*{PIII(D6)}
% \begin{figure}[htbp]
	\centering
	\def\svgwidth{\textwidth}
	%% Creator: Inkscape 1.4.2 (f4327f4, 2025-05-13), www.inkscape.org
%% PDF/EPS/PS + LaTeX output extension by Johan Engelen, 2010
%% Accompanies image file 'stokes_p3d6.pdf' (pdf, eps, ps)
%%
%% To include the image in your LaTeX document, write
%%   \input{<filename>.pdf_tex}
%%  instead of
%%   \includegraphics{<filename>.pdf}
%% To scale the image, write
%%   \def\svgwidth{<desired width>}
%%   \input{<filename>.pdf_tex}
%%  instead of
%%   \includegraphics[width=<desired width>]{<filename>.pdf}
%%
%% Images with a different path to the parent latex file can
%% be accessed with the `import' package (which may need to be
%% installed) using
%%   \usepackage{import}
%% in the preamble, and then including the image with
%%   \import{<path to file>}{<filename>.pdf_tex}
%% Alternatively, one can specify
%%   \graphicspath{{<path to file>/}}
%% 
%% For more information, please see info/svg-inkscape on CTAN:
%%   http://tug.ctan.org/tex-archive/info/svg-inkscape
%%
\begingroup%
  \makeatletter%
  \providecommand\color[2][]{%
    \errmessage{(Inkscape) Color is used for the text in Inkscape, but the package 'color.sty' is not loaded}%
    \renewcommand\color[2][]{}%
  }%
  \providecommand\transparent[1]{%
    \errmessage{(Inkscape) Transparency is used (non-zero) for the text in Inkscape, but the package 'transparent.sty' is not loaded}%
    \renewcommand\transparent[1]{}%
  }%
  \providecommand\rotatebox[2]{#2}%
  \newcommand*\fsize{\dimexpr\f@size pt\relax}%
  \newcommand*\lineheight[1]{\fontsize{\fsize}{#1\fsize}\selectfont}%
  \ifx\svgwidth\undefined%
    \setlength{\unitlength}{497.04946923bp}%
    \ifx\svgscale\undefined%
      \relax%
    \else%
      \setlength{\unitlength}{\unitlength * \real{\svgscale}}%
    \fi%
  \else%
    \setlength{\unitlength}{\svgwidth}%
  \fi%
  \global\let\svgwidth\undefined%
  \global\let\svgscale\undefined%
  \makeatother%
  \begin{picture}(1,0.88240522)%
    \lineheight{1}%
    \setlength\tabcolsep{0pt}%
    \put(0,0){\includegraphics[width=\unitlength,page=1]{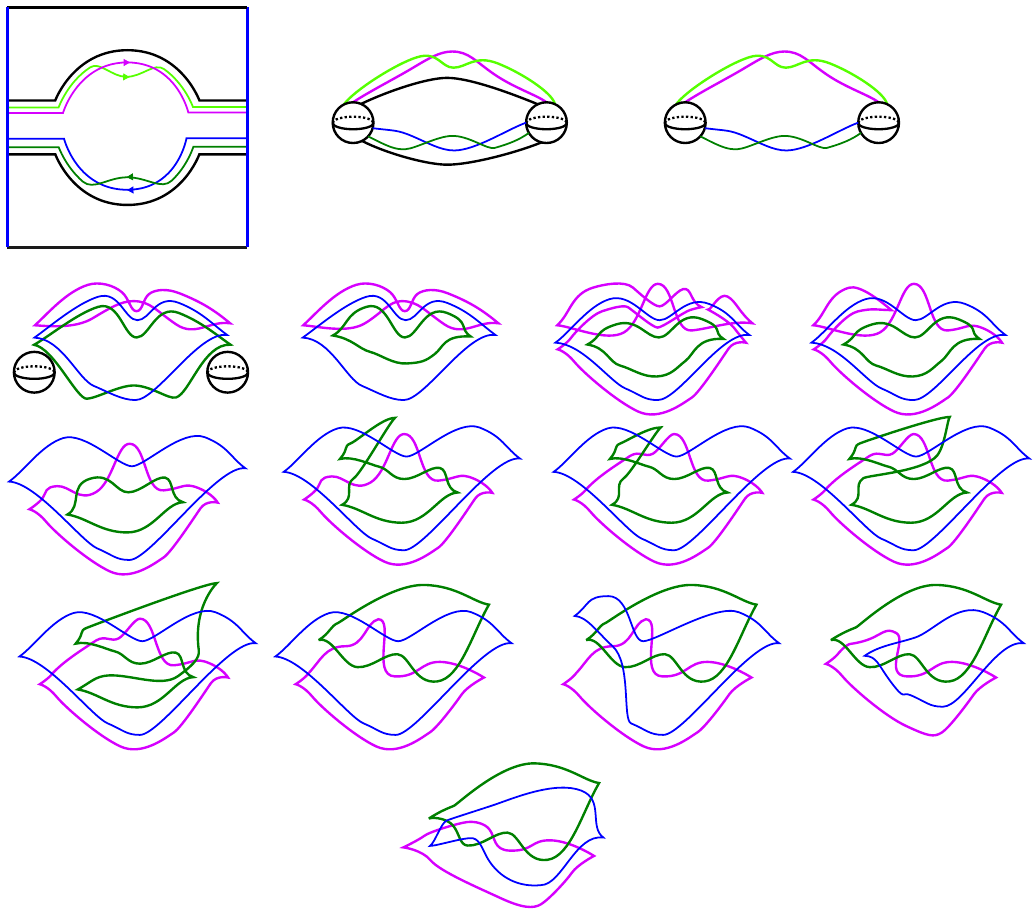}}%
  \end{picture}%
\endgroup%

	\captionof{figure}{Application of \zcref{prop:lifting_with_boundary} and subsequent simplifications of PIII(D6). The last diagram is observed to be a $180$-degree rotation of the central link on the fourth line of \zcref{fig:p3d6}, thereby proving Weinstein deformation equivalence for PIII(D6).}
	\label{fig:stokes_p3d6}
% \end{figure}
\end{minipage}

\begin{minipage}{\textwidth}
\addtocontents{toc}{\SkipTocEntry}
\subsection*{PIII(D7)}
% \begin{figure}[htbp]
	\centering
	\def\svgwidth{\textwidth}
	%% Creator: Inkscape 1.4.2 (ebf0e940d0, 2025-05-08), www.inkscape.org
%% PDF/EPS/PS + LaTeX output extension by Johan Engelen, 2010
%% Accompanies image file 'stokes_p3d7.pdf' (pdf, eps, ps)
%%
%% To include the image in your LaTeX document, write
%%   \input{<filename>.pdf_tex}
%%  instead of
%%   \includegraphics{<filename>.pdf}
%% To scale the image, write
%%   \def\svgwidth{<desired width>}
%%   \input{<filename>.pdf_tex}
%%  instead of
%%   \includegraphics[width=<desired width>]{<filename>.pdf}
%%
%% Images with a different path to the parent latex file can
%% be accessed with the `import' package (which may need to be
%% installed) using
%%   \usepackage{import}
%% in the preamble, and then including the image with
%%   \import{<path to file>}{<filename>.pdf_tex}
%% Alternatively, one can specify
%%   \graphicspath{{<path to file>/}}
%% 
%% For more information, please see info/svg-inkscape on CTAN:
%%   http://tug.ctan.org/tex-archive/info/svg-inkscape
%%
\begingroup%
  \makeatletter%
  \providecommand\color[2][]{%
    \errmessage{(Inkscape) Color is used for the text in Inkscape, but the package 'color.sty' is not loaded}%
    \renewcommand\color[2][]{}%
  }%
  \providecommand\transparent[1]{%
    \errmessage{(Inkscape) Transparency is used (non-zero) for the text in Inkscape, but the package 'transparent.sty' is not loaded}%
    \renewcommand\transparent[1]{}%
  }%
  \providecommand\rotatebox[2]{#2}%
  \newcommand*\fsize{\dimexpr\f@size pt\relax}%
  \newcommand*\lineheight[1]{\fontsize{\fsize}{#1\fsize}\selectfont}%
  \ifx\svgwidth\undefined%
    \setlength{\unitlength}{457.82079501bp}%
    \ifx\svgscale\undefined%
      \relax%
    \else%
      \setlength{\unitlength}{\unitlength * \real{\svgscale}}%
    \fi%
  \else%
    \setlength{\unitlength}{\svgwidth}%
  \fi%
  \global\let\svgwidth\undefined%
  \global\let\svgscale\undefined%
  \makeatother%
  \begin{picture}(1,0.78733074)%
    \lineheight{1}%
    \setlength\tabcolsep{0pt}%
    \put(0,0){\includegraphics[width=\unitlength,page=1]{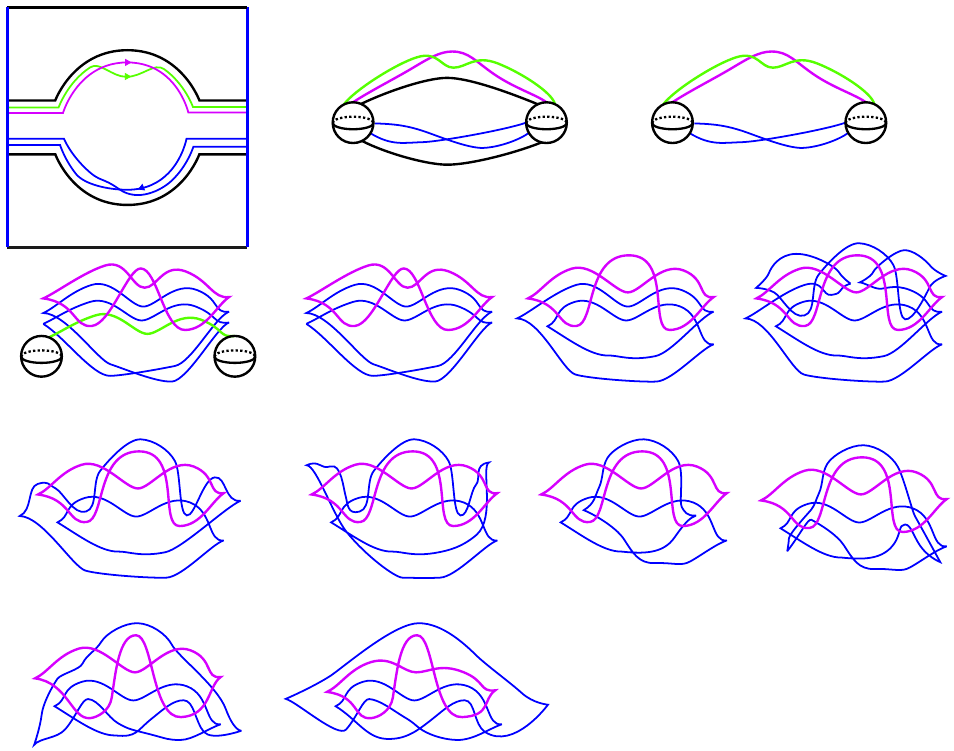}}%
  \end{picture}%
\endgroup%

	\captionof{figure}{Application of \zcref{prop:lifting_with_boundary} and subsequent simplifications of PIII(D7).}
	\label{fig:stokes_p3d7}
% \end{figure}
\end{minipage}

\begin{minipage}{\textwidth}
	\addtocontents{toc}{\SkipTocEntry}
		\subsection*{PIII(D8)}
% \begin{figure}[htbp]
	\centering
	\def\svgwidth{\textwidth}
	%% Creator: Inkscape 1.4.2 (f4327f4, 2025-05-13), www.inkscape.org
%% PDF/EPS/PS + LaTeX output extension by Johan Engelen, 2010
%% Accompanies image file 'stokes_p3d8.pdf' (pdf, eps, ps)
%%
%% To include the image in your LaTeX document, write
%%   \input{<filename>.pdf_tex}
%%  instead of
%%   \includegraphics{<filename>.pdf}
%% To scale the image, write
%%   \def\svgwidth{<desired width>}
%%   \input{<filename>.pdf_tex}
%%  instead of
%%   \includegraphics[width=<desired width>]{<filename>.pdf}
%%
%% Images with a different path to the parent latex file can
%% be accessed with the `import' package (which may need to be
%% installed) using
%%   \usepackage{import}
%% in the preamble, and then including the image with
%%   \import{<path to file>}{<filename>.pdf_tex}
%% Alternatively, one can specify
%%   \graphicspath{{<path to file>/}}
%% 
%% For more information, please see info/svg-inkscape on CTAN:
%%   http://tug.ctan.org/tex-archive/info/svg-inkscape
%%
\begingroup%
  \makeatletter%
  \providecommand\color[2][]{%
    \errmessage{(Inkscape) Color is used for the text in Inkscape, but the package 'color.sty' is not loaded}%
    \renewcommand\color[2][]{}%
  }%
  \providecommand\transparent[1]{%
    \errmessage{(Inkscape) Transparency is used (non-zero) for the text in Inkscape, but the package 'transparent.sty' is not loaded}%
    \renewcommand\transparent[1]{}%
  }%
  \providecommand\rotatebox[2]{#2}%
  \newcommand*\fsize{\dimexpr\f@size pt\relax}%
  \newcommand*\lineheight[1]{\fontsize{\fsize}{#1\fsize}\selectfont}%
  \ifx\svgwidth\undefined%
    \setlength{\unitlength}{417.08124116bp}%
    \ifx\svgscale\undefined%
      \relax%
    \else%
      \setlength{\unitlength}{\unitlength * \real{\svgscale}}%
    \fi%
  \else%
    \setlength{\unitlength}{\svgwidth}%
  \fi%
  \global\let\svgwidth\undefined%
  \global\let\svgscale\undefined%
  \makeatother%
  \begin{picture}(1,0.29247574)%
    \lineheight{1}%
    \setlength\tabcolsep{0pt}%
    \put(0,0){\includegraphics[width=\unitlength,page=1]{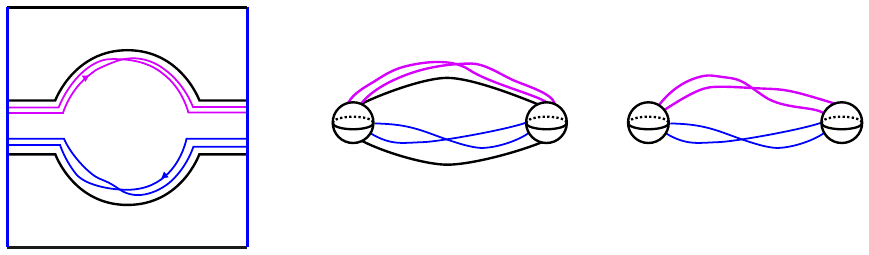}}%
  \end{picture}%
\endgroup%

	\captionof{figure}{Application of \zcref{prop:lifting_with_boundary} and subsequent simplifications of PIII(D8).}
	\label{fig:stokes_p3d8}
% \end{figure}
\end{minipage}

\begin{minipage}{\textwidth}
	\addtocontents{toc}{\SkipTocEntry}
	\subsection*{PIV}
% \begin{figure}[htbp]
	\centering
	\def\svgwidth{\textwidth}
	%% Creator: Inkscape 1.4.2 (f4327f4, 2025-05-13), www.inkscape.org
%% PDF/EPS/PS + LaTeX output extension by Johan Engelen, 2010
%% Accompanies image file 'stokes_p4.pdf' (pdf, eps, ps)
%%
%% To include the image in your LaTeX document, write
%%   \input{<filename>.pdf_tex}
%%  instead of
%%   \includegraphics{<filename>.pdf}
%% To scale the image, write
%%   \def\svgwidth{<desired width>}
%%   \input{<filename>.pdf_tex}
%%  instead of
%%   \includegraphics[width=<desired width>]{<filename>.pdf}
%%
%% Images with a different path to the parent latex file can
%% be accessed with the `import' package (which may need to be
%% installed) using
%%   \usepackage{import}
%% in the preamble, and then including the image with
%%   \import{<path to file>}{<filename>.pdf_tex}
%% Alternatively, one can specify
%%   \graphicspath{{<path to file>/}}
%% 
%% For more information, please see info/svg-inkscape on CTAN:
%%   http://tug.ctan.org/tex-archive/info/svg-inkscape
%%
\begingroup%
  \makeatletter%
  \providecommand\color[2][]{%
    \errmessage{(Inkscape) Color is used for the text in Inkscape, but the package 'color.sty' is not loaded}%
    \renewcommand\color[2][]{}%
  }%
  \providecommand\transparent[1]{%
    \errmessage{(Inkscape) Transparency is used (non-zero) for the text in Inkscape, but the package 'transparent.sty' is not loaded}%
    \renewcommand\transparent[1]{}%
  }%
  \providecommand\rotatebox[2]{#2}%
  \newcommand*\fsize{\dimexpr\f@size pt\relax}%
  \newcommand*\lineheight[1]{\fontsize{\fsize}{#1\fsize}\selectfont}%
  \ifx\svgwidth\undefined%
    \setlength{\unitlength}{457.30487012bp}%
    \ifx\svgscale\undefined%
      \relax%
    \else%
      \setlength{\unitlength}{\unitlength * \real{\svgscale}}%
    \fi%
  \else%
    \setlength{\unitlength}{\svgwidth}%
  \fi%
  \global\let\svgwidth\undefined%
  \global\let\svgscale\undefined%
  \makeatother%
  \begin{picture}(1,0.76096378)%
    \lineheight{1}%
    \setlength\tabcolsep{0pt}%
    \put(0,0){\includegraphics[width=\unitlength,page=1]{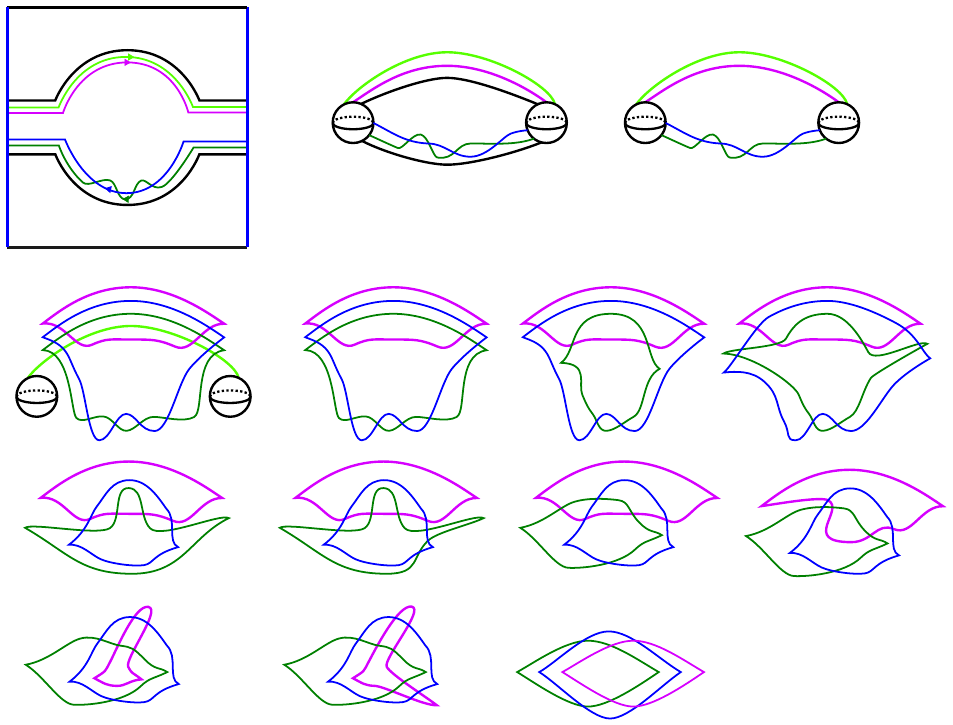}}%
  \end{picture}%
\endgroup%

	\captionof{figure}{Application of \zcref{prop:lifting_with_boundary} and subsequent simplifications of PIV.}
	\label{fig:stokes_p4}
% \end{figure}
\end{minipage}

We now proceed to the cases with three punctures. 

\begin{minipage}{\textwidth}
	\addtocontents{toc}{\SkipTocEntry}
	\subsection*{PV}
% \begin{figure}[htbp]
	\centering
	\def\svgwidth{\textwidth}
	%% Creator: Inkscape 1.4.2 (f4327f4, 2025-05-13), www.inkscape.org
%% PDF/EPS/PS + LaTeX output extension by Johan Engelen, 2010
%% Accompanies image file 'stokes_p5.pdf' (pdf, eps, ps)
%%
%% To include the image in your LaTeX document, write
%%   \input{<filename>.pdf_tex}
%%  instead of
%%   \includegraphics{<filename>.pdf}
%% To scale the image, write
%%   \def\svgwidth{<desired width>}
%%   \input{<filename>.pdf_tex}
%%  instead of
%%   \includegraphics[width=<desired width>]{<filename>.pdf}
%%
%% Images with a different path to the parent latex file can
%% be accessed with the `import' package (which may need to be
%% installed) using
%%   \usepackage{import}
%% in the preamble, and then including the image with
%%   \import{<path to file>}{<filename>.pdf_tex}
%% Alternatively, one can specify
%%   \graphicspath{{<path to file>/}}
%% 
%% For more information, please see info/svg-inkscape on CTAN:
%%   http://tug.ctan.org/tex-archive/info/svg-inkscape
%%
\begingroup%
  \makeatletter%
  \providecommand\color[2][]{%
    \errmessage{(Inkscape) Color is used for the text in Inkscape, but the package 'color.sty' is not loaded}%
    \renewcommand\color[2][]{}%
  }%
  \providecommand\transparent[1]{%
    \errmessage{(Inkscape) Transparency is used (non-zero) for the text in Inkscape, but the package 'transparent.sty' is not loaded}%
    \renewcommand\transparent[1]{}%
  }%
  \providecommand\rotatebox[2]{#2}%
  \newcommand*\fsize{\dimexpr\f@size pt\relax}%
  \newcommand*\lineheight[1]{\fontsize{\fsize}{#1\fsize}\selectfont}%
  \ifx\svgwidth\undefined%
    \setlength{\unitlength}{459.11623016bp}%
    \ifx\svgscale\undefined%
      \relax%
    \else%
      \setlength{\unitlength}{\unitlength * \real{\svgscale}}%
    \fi%
  \else%
    \setlength{\unitlength}{\svgwidth}%
  \fi%
  \global\let\svgwidth\undefined%
  \global\let\svgscale\undefined%
  \makeatother%
  \begin{picture}(1,1.00280302)%
    \lineheight{1}%
    \setlength\tabcolsep{0pt}%
    \put(0,0){\includegraphics[width=\unitlength,page=1]{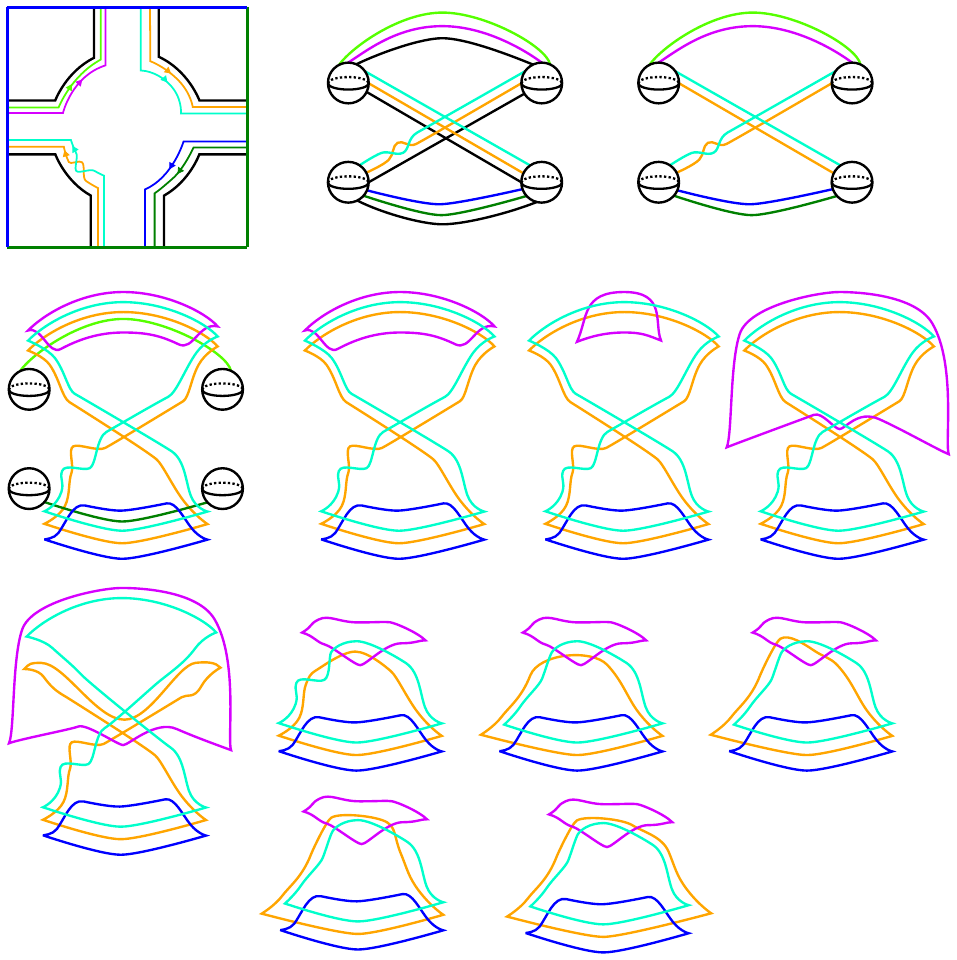}}%
  \end{picture}%
\endgroup%

	\captionof{figure}{Application of \zcref{prop:lifting_with_boundary} and subsequent simplifications of PV.}
	\label{fig:stokes_p5}
% \end{figure}
\end{minipage}

\begin{minipage}{\textwidth}
	\subsection*{PV(deg)}
	\addtocontents{toc}{\SkipTocEntry}
% \begin{figure}[htbp]
	\centering
	\def\svgwidth{\textwidth}
	%% Creator: Inkscape 1.4.2 (f4327f4, 2025-05-13), www.inkscape.org
%% PDF/EPS/PS + LaTeX output extension by Johan Engelen, 2010
%% Accompanies image file 'stokes_p5deg.pdf' (pdf, eps, ps)
%%
%% To include the image in your LaTeX document, write
%%   \input{<filename>.pdf_tex}
%%  instead of
%%   \includegraphics{<filename>.pdf}
%% To scale the image, write
%%   \def\svgwidth{<desired width>}
%%   \input{<filename>.pdf_tex}
%%  instead of
%%   \includegraphics[width=<desired width>]{<filename>.pdf}
%%
%% Images with a different path to the parent latex file can
%% be accessed with the `import' package (which may need to be
%% installed) using
%%   \usepackage{import}
%% in the preamble, and then including the image with
%%   \import{<path to file>}{<filename>.pdf_tex}
%% Alternatively, one can specify
%%   \graphicspath{{<path to file>/}}
%% 
%% For more information, please see info/svg-inkscape on CTAN:
%%   http://tug.ctan.org/tex-archive/info/svg-inkscape
%%
\begingroup%
  \makeatletter%
  \providecommand\color[2][]{%
    \errmessage{(Inkscape) Color is used for the text in Inkscape, but the package 'color.sty' is not loaded}%
    \renewcommand\color[2][]{}%
  }%
  \providecommand\transparent[1]{%
    \errmessage{(Inkscape) Transparency is used (non-zero) for the text in Inkscape, but the package 'transparent.sty' is not loaded}%
    \renewcommand\transparent[1]{}%
  }%
  \providecommand\rotatebox[2]{#2}%
  \newcommand*\fsize{\dimexpr\f@size pt\relax}%
  \newcommand*\lineheight[1]{\fontsize{\fsize}{#1\fsize}\selectfont}%
  \ifx\svgwidth\undefined%
    \setlength{\unitlength}{453.01604984bp}%
    \ifx\svgscale\undefined%
      \relax%
    \else%
      \setlength{\unitlength}{\unitlength * \real{\svgscale}}%
    \fi%
  \else%
    \setlength{\unitlength}{\svgwidth}%
  \fi%
  \global\let\svgwidth\undefined%
  \global\let\svgscale\undefined%
  \makeatother%
  \begin{picture}(1,1.10619846)%
    \lineheight{1}%
    \setlength\tabcolsep{0pt}%
    \put(0,0){\includegraphics[width=\unitlength,page=1]{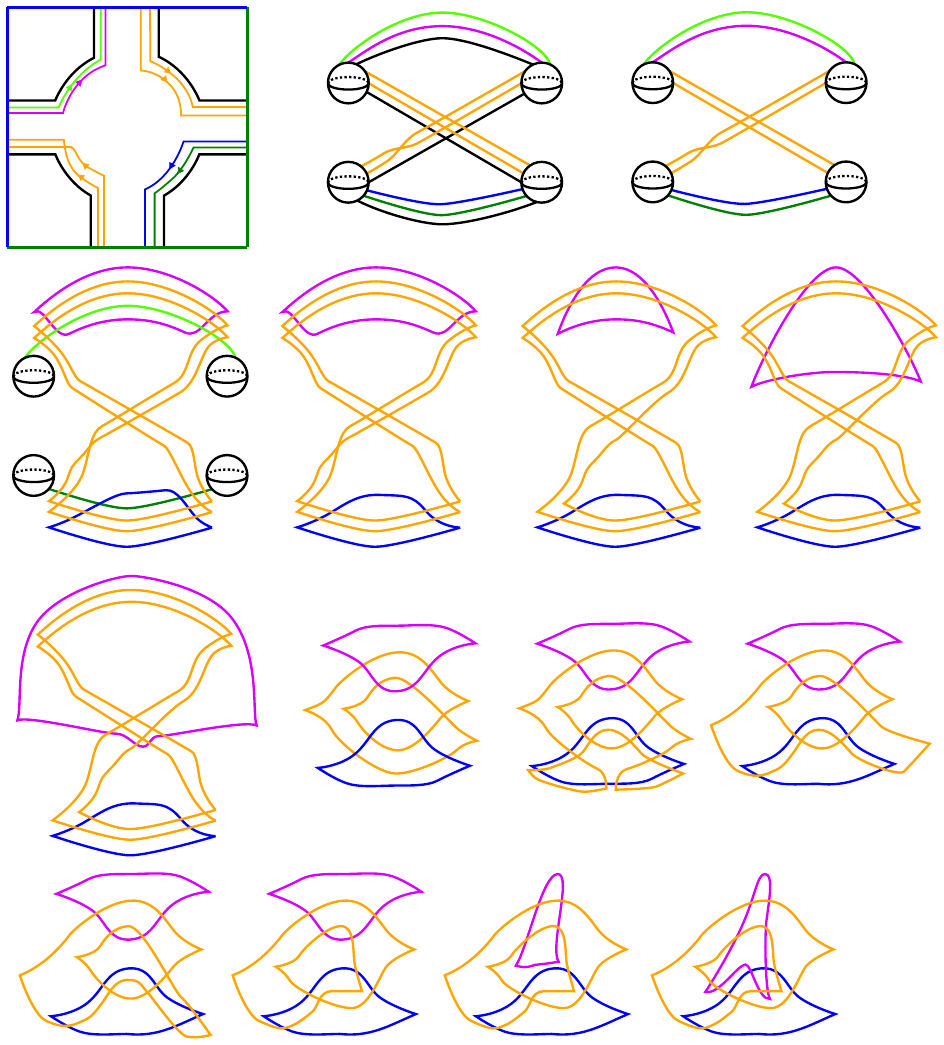}}%
  \end{picture}%
\endgroup%

	\captionof{figure}{Application of \zcref{prop:lifting_with_boundary} and subsequent simplifications of PV(deg).}
	\label{fig:stokes_p5deg_1}
% \end{figure}
\end{minipage}

\begin{minipage}{\textwidth}
% \begin{figure}[htbp]
	\centering
	\def\svgwidth{\textwidth}
	%% Creator: Inkscape 1.4.2 (f4327f4, 2025-05-13), www.inkscape.org
%% PDF/EPS/PS + LaTeX output extension by Johan Engelen, 2010
%% Accompanies image file 'stokes_p5deg_2.pdf' (pdf, eps, ps)
%%
%% To include the image in your LaTeX document, write
%%   \input{<filename>.pdf_tex}
%%  instead of
%%   \includegraphics{<filename>.pdf}
%% To scale the image, write
%%   \def\svgwidth{<desired width>}
%%   \input{<filename>.pdf_tex}
%%  instead of
%%   \includegraphics[width=<desired width>]{<filename>.pdf}
%%
%% Images with a different path to the parent latex file can
%% be accessed with the `import' package (which may need to be
%% installed) using
%%   \usepackage{import}
%% in the preamble, and then including the image with
%%   \import{<path to file>}{<filename>.pdf_tex}
%% Alternatively, one can specify
%%   \graphicspath{{<path to file>/}}
%% 
%% For more information, please see info/svg-inkscape on CTAN:
%%   http://tug.ctan.org/tex-archive/info/svg-inkscape
%%
\begingroup%
  \makeatletter%
  \providecommand\color[2][]{%
    \errmessage{(Inkscape) Color is used for the text in Inkscape, but the package 'color.sty' is not loaded}%
    \renewcommand\color[2][]{}%
  }%
  \providecommand\transparent[1]{%
    \errmessage{(Inkscape) Transparency is used (non-zero) for the text in Inkscape, but the package 'transparent.sty' is not loaded}%
    \renewcommand\transparent[1]{}%
  }%
  \providecommand\rotatebox[2]{#2}%
  \newcommand*\fsize{\dimexpr\f@size pt\relax}%
  \newcommand*\lineheight[1]{\fontsize{\fsize}{#1\fsize}\selectfont}%
  \ifx\svgwidth\undefined%
    \setlength{\unitlength}{430.49195778bp}%
    \ifx\svgscale\undefined%
      \relax%
    \else%
      \setlength{\unitlength}{\unitlength * \real{\svgscale}}%
    \fi%
  \else%
    \setlength{\unitlength}{\svgwidth}%
  \fi%
  \global\let\svgwidth\undefined%
  \global\let\svgscale\undefined%
  \makeatother%
  \begin{picture}(1,0.87247485)%
    \lineheight{1}%
    \setlength\tabcolsep{0pt}%
    \put(0,0){\includegraphics[width=\unitlength,page=1]{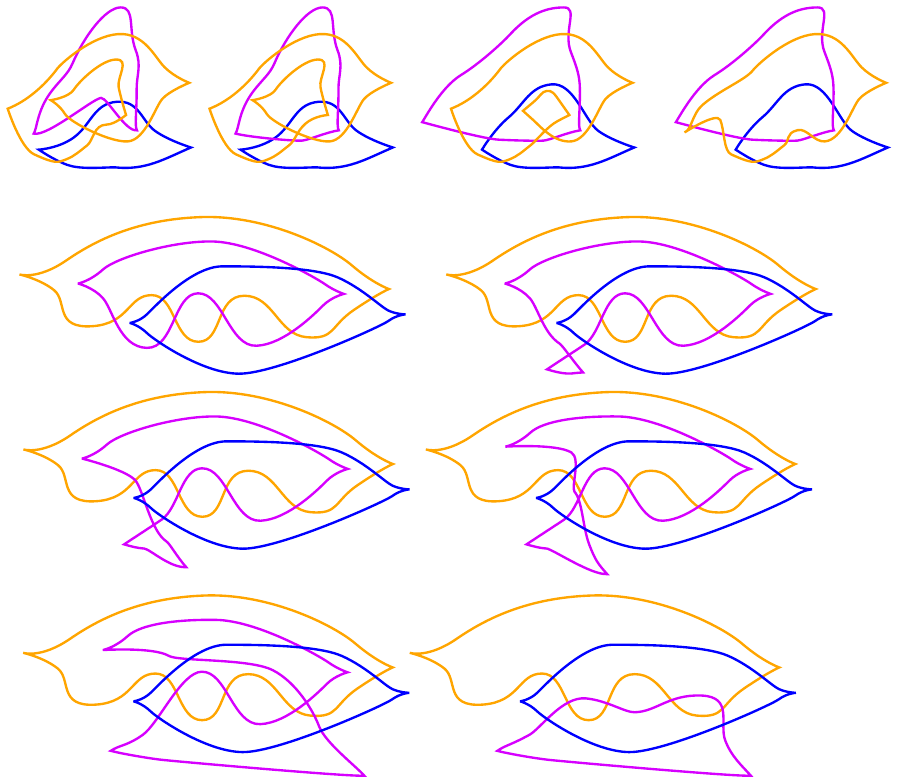}}%
  \end{picture}%
\endgroup%

	\captionof{figure}{Observe that the final diagram was encountered in the second to last step of \zcref{fig:stokes_p3d6}, proving \zcref{thm:stokes_is_generic_monodromy} for PV(deg).}
	\label{fig:stokes_p5deg_2}
% \end{figure}
\end{minipage}

% \appendix
% \include{sections/appendix}
\newpage
\printbibliography

@misc{ailsa_tori,
  author       = {Ailsa Keating},
  howpublished = {},
  title        = {Lagrangian tori in four-dimensional Milnor fibres},
  year         = {2015},
  url          = {https://arxiv.org/pdf/1405.0744}
}

@book{seidelbook,
  author    = {Paul Seidel},
  editor    = {Zurich Lectures in Advanced Mathematics},
  publisher = {European Mathematical Society},
  title     = {Fukaya categories and Picard-Lefschetz theory},
  year      = {2008}
}

@article{ramanujan,
  author  = {Paul Seidel and Ivan Smith},
  journal = {Commentarii Mathematici Helvetici},
  number  = {80},
  pages   = {859–-881},
  title   = {The Symplectic Topology of Ramanujan’s Surface},
  volume  = {30},
  year    = {2005}
}

@article{plumbings,
  author  = {Mohammed Abouzaid and Ivan Smith},
  journal = {GAFA Geometric and Functional Analysis},
  number  = {4},
  title   = {Exact Lagrangians in plumbings},
  volume  = {22},
  year    = {2012},
  doi     = {https://doi.org/10.1007/s00039-012-0162-y}
}

@misc{split,
  author       = {Kai Cieliebak},
  howpublished = {arxiv},
  title        = {Subcritical Stein manifolds are split},
  year         = {2002},
  doi          = {10.48550/arXiv.math/0204351}
}

@book{ce_stein_weinstein,
  author    = {Kai Cieliebak and Yakov Eliashberg},
  editor    = {Colloquium Publications Series},
  publisher = {American Mathematical Society},
  title     = {From Stein to Weinstein and Back: Symplectic Geometry of Affine Complex Manifolds},
  year      = {2012}
}

@article{cm_lefs,
  title     = {Legendrian fronts for affine varieties},
  volume    = {168},
  issn      = {0012-7094},
  url       = {http://dx.doi.org/10.1215/00127094-2018-0055},
  doi       = {10.1215/00127094-2018-0055},
  number    = {2},
  journal   = {Duke Mathematical Journal},
  publisher = {Duke University Press},
  author    = {Casals, Roger and Murphy, Emmy},
  year      = {2019},
  month     = feb
}

@masterthesis{parker,
  author = {Gregory Parker},
  school = {Harvard University},
  title  = {Lefschetz fibrations on 4-manifolds},
  year   = {2017},
  url={https://legacy-www.math.harvard.edu/theses/senior/parker/parker.pdf}
}

@article{weinstein,
  author  = {Alan Weinstein},
  journal = {Hokkaido Mathematical Journal},
  number  = {20},
  title   = {Contact surgery and symplectic handlebodies},
  volume  = {2},
  year    = {1991}
}

@article{gompf_handlebody,
  issn      = {0003486X},
  url       = {http://www.jstor.org/stable/121005},
  author    = {Robert E. Gompf},
  journal   = {Annals of Mathematics},
  number    = {2},
  pages     = {619--693},
  publisher = {Annals of Mathematics},
  title     = {Handlebody Construction of Stein Surfaces},
  urldate   = {2025-05-19},
  volume    = {148},
  year      = {1998}
}

@article{giroux_pardon,
  author    = {Emmanuel Giroux and John Pardon},
  title     = {Existence of Lefschetz fibrations on Stein and Weinstein domains},
  volume    = {21},
  journal   = {Geometry and Topology},
  number    = {2},
  publisher = {MSP},
  pages     = {963 -- 997},
  year      = {2017},
  doi       = {10.2140/gt.2017.21.963},
  url       = {https://doi.org/10.2140/gt.2017.21.963}
}

@article{ding_geiges_surgery,
  author  = {Fan Ding and Hansjörg Geiges},
  title   = {Handle moves in contact surgery diagrams},
  journal = {Journal of Topology},
  volume  = {2},
  number  = {1},
  pages   = {105-122},
  doi     = {https://doi.org/10.1112/jtopol/jtp002},
  eprint  = {https://londmathsoc.onlinelibrary.wiley.com/doi/pdf/10.1112/jtopol/jtp002},
  year    = {2009}
}

@book{geiges_contact_intro,
  place      = {Cambridge},
  series     = {Cambridge Studies in Advanced Mathematics},
  title      = {An Introduction to Contact Topology},
  publisher  = {Cambridge University Press},
  author     = {Geiges, Hansjörg},
  year       = {2008},
  collection = {Cambridge Studies in Advanced Mathematics}
}

@article{mclean_symp_homology,
author = {Mark McLean},
title = {{Lefschetz fibrations and symplectic homology}},
volume = {13},
journal = {Geometry \& Topology},
number = {4},
publisher = {MSP},
pages = {1877 -- 1944},
keywords = {Lefschetz fibration, Stein manifold, symplectic homology},
year = {2009},
doi = {10.2140/gt.2009.13.1877},
URL = {https://doi.org/10.2140/gt.2009.13.1877}
}

@phdthesis{orsola_thesis,
  author = {Orsola Capovilla-Searle},
  school = {Duke University},
  title  = {Exact Lagrangian Fillings of Legendrian links and Weinstein 4-manifolds},
  year   = {2021},
  url    = {https://dukespace.lib.duke.edu/server/api/core/bitstreams/71ff3a73-3af6-4396-a561-6793414b0c70/content}
}

@article{ailsa_cusp,
  author  = {Ailsa Keating},
  journal = {Selecta Mathematica},
  number  = {2},
  title   = {Homological mirror symmetry for hypersurface cusp singularities},
  volume  = {24},
  year    = {2018},
  pages   = {11411--1452},
  ulr     = {https://doi.org/10.1007/s00029-017-0334-6},
  doi     = {10.1007/s00029-017-0334-6}
}

@article{szabo2021perversity,
  title     = {Perversity equals weight for Painlev{\'e} spaces},
  author    = {Szab{\'o}, Szil{\'a}rd},
  journal   = {Advances in Mathematics},
  volume    = {383},
  pages     = {107667},
  year      = {2021},
  publisher = {Elsevier}
}

@software{hbds_repo,
  author = {Beimler, Joël},
  title  = {{Affine Handlebodies}},
  url    = {https://github.com/ViveLeFrance/AffineHandlebodies}
}

@article{saito_vput,
  author    = {Marius van der Put and Masa-Hiko Saito},
  title     = {Moduli spaces for linear differential equations and the {Painlev\'e} equations},
  journal   = {Annales de l'Institut Fourier},
  pages     = {2611--2667},
  publisher = {Association des Annales de l'Institut Fourier},
  volume    = {59},
  number    = {7},
  year      = {2009},
  doi       = {10.5802/aif.2502},
  zbl       = {1189.14021},
  mrnumber  = {2649335},
  language  = {en},
  url       = {https://www.numdam.org/articles/10.5802/aif.2502/}
}

@article{szabo_nemethi_geom_pw,
  author  = {András Némethi and Szilárd Szabó},
  title   = {The {G}eometric {P=W} {C}onjecture in the {Painlevé} {C}ases via {P}lumbing {C}alculus},
  journal = {International Mathematics Research Notices},
  volume  = {2022},
  number  = {5},
  pages   = {3201-3218},
  year    = {2020},
  month   = {09},
  issn    = {1073-7928},
  doi     = {10.1093/imrn/rnaa245},
  url     = {https://doi.org/10.1093/imrn/rnaa245},
  eprint  = {https://academic.oup.com/imrn/article-pdf/2022/5/3201/42621348/rnaa245.pdf}
}

@misc{lattice,
  author       = {Noam Elkies},
  howpublished = {Lecture Notes},
  title        = {Rational Lattices and their Theta Functions},
  year         = {2019},
  url          = {https://people.math.harvard.edu/~elkies/M272.19/sep11.pdf}
}

@article{seidel_smith_fukaya_cotangents,
  author  = {Kenji Fukaya and Paul Seidel and Ivan Smith},
  journal = {Inventiones Mathematicae},
  number  = {1},
  title   = {Exact {L}agrangian submanifolds in simply-connected cotangent bundles},
  volume  = {172},
  year    = {2008},
  issue   = {1},
  doi     = {0.1007/s00222-007-0092-8}
}

@misc{betti_geom_langlands,
  title         = {Betti Geometric Langlands},
  author        = {David Ben-Zvi and David Nadler},
  year          = {2016},
  eprint        = {1606.08523},
  archiveprefix = {arXiv},
  primaryclass  = {math.RT},
  url           = {https://arxiv.org/abs/1606.08523}
}

@article{acu_complements,
  author = {Bahar Acu and Orsola Capovilla-Searle and Agnès Gadbled and Aleksandra Marinković and Emmy Murphy and Laura Starkston and Angela Wu},
  journal = {Memoirs of the American Mathematical Society},
  number = {1561},
  title = {Weinstein Handlebodies for Complements of Smoothed Toric Divisors},
  volume = {309},
  year = {2025}
}

@article{shende2019cluster,
  title={Cluster varieties from legendrian knots},
  author={Shende, Vivek and Treumann, David and Williams, Harold and Zaslow, Eric},
  journal={Duke Mathematical Journal},
  volume={168},
  number={15},
  pages={2801--2871},
  year={2019},
  publisher={Duke University Press}
}

@misc{casals_nho,
      title={Spectral Networks and Betti Lagrangians}, 
      author={Roger Casals and Yoon Jae Nho},
      year={2025},
      eprint={2504.08144},
      archivePrefix={arXiv},
      primaryClass={math.SG},
      url={https://arxiv.org/abs/2504.08144}, 
}

@article{su_betti_dual_boundary,
title = {Dual boundary complexes of Betti moduli spaces over the two-sphere with one irregular singularity},
journal = {Advances in Mathematics},
volume = {462},
pages = {110101},
year = {2025},
issn = {0001-8708},
doi = {https://doi.org/10.1016/j.aim.2024.110101},
url = {https://www.sciencedirect.com/science/article/pii/S0001870824006170},
author = {Tao Su},
abstract = {The weak geometric P=W conjecture of L. Katzarkov, A. Noll, P. Pandit, and C. Simpson states that, a smooth Betti moduli space of complex dimension d over a punctured Riemann surface has the dual boundary complex homotopy equivalent to a sphere of dimension d-1. Via a microlocal geometric perspective, we verify this conjecture for a class of rank n wild character varieties over the two-sphere with one puncture, associated with any Stokes Legendrian link defined by an n-strand positive braid.}
}

@article{gps1,
  author = {Sheel Ganatra and John Pardon and Vivek Shende},
  journal = {Publications mathématiques de l'IHES},
  number = {},
  title = {Covariantly functorial wrapped Floer theory on Liouville sectors},
  volume = {131},
  year = {2020},
  issue = {1},
  doi = {10.1007/s10240-019-00112-x},
}

@article{gps3,
author = {Sheel Ganatra and John Pardon and Vivek Shende},
title = {{Microlocal Morse theory of wrapped Fukaya categories}},
volume = {199},
journal = {Annals of Mathematics},
number = {3},
publisher = {Department of Mathematics of Princeton University},
pages = {943 -- 1042},
year = {2024},
doi = {10.4007/annals.2024.199.3.1},
URL = {https://doi.org/10.4007/annals.2024.199.3.1}
}

@article{okamoto_feuilletages,
  title={Sur les feuilletages associés aux équation du second ordre à points critiques fixes de P. Painlevé; Espaces des conditions initiales},
  author={Kazuo OKAMOTO},
  journal={Japanese journal of mathematics. New series},
  volume={5},
  number={1},
  pages={1-79},
  year={1979},
  doi={10.4099/math1924.5.1}
}

@online{boalch_program,
  author = {Philip Boalch},
  url = {https://webusers.imj-prg.fr/~philip.boalch/stokesdiagrams.html},
  title = {Stokes diagrams of symmetric irregular classes},
  year = {2022}
}
\end{document}